\crefname{exm}{Example}{Examples}
\crefname{cor}{Corollary}{Corollaries}
\crefname{prop}{Proposition}{Propositions}
\crefname{rmk}{Remark}{Remarks}
\crefname{lem}{Lemma}{Lemmata}
\newcommand{\oset}[3][0ex]{%
	\mathrel{\mathop{#3}\limits^{
			\vbox to#1{\kern-2\ex@
				\hbox{$\scriptstyle#2$}\vss}}}}
\numberwithin{equation}{section}%
\newtheorem{thm}[equation]{Theorem}
\newtheorem{prop}[equation]{Proposition}
\newtheorem{cor}[equation]{Corollary}
\theoremstyle{definition}
\newtheorem{dfn}[equation]{Definition}
\newtheorem{notn}[equation]{Notation}
\newtheorem{rmk}[equation]{Remark}
\newtheorem{exm}[equation]{Example}
\newtheorem{cons}[equation]{Construction}
\newcommand{\one}{\mathbb{1}}
\newcommand{\A}{\mathbb{A}}
\newcommand{\B}{\mathbb{B}}
\newcommand{\C}{\mathbb{C}}
\newcommand{\G}{\mathbb{G}}
\newcommand{\N}{\mathbb{N}}
\renewcommand{\P}{\mathbb{P}}
\newcommand{\Q}{\mathbb{Q}}
\newcommand{\Z}{\mathbb{Z}}
\newcommand{\catC}{\mathscr{C}}
\newcommand{\catD}{\mathscr{D}}
\newcommand{\mcA}{\mathcal{A}}
\newcommand{\mcD}{\mathcal{D}}
\newcommand{\mcK}{\mathcal{K}}
\newcommand{\mcO}{\mathcal{O}}
\newcommand{\mcS}{\mathcal{S}}
\newcommand{\mcX}{\mathcal{X}}
\newcommand{\mfT}{\mathfrak{T}}
\newcommand{\mfP}{\mathfrak{P}}
\newcommand{\mfS}{\mathfrak{S}}
\newcommand{\mfU}{\mathfrak{U}}
\newcommand{\mfX}{\mathfrak{X}}
\newcommand{\xto}[1]{\xrightarrow{#1}}
\DeclareMathOperator{\alg}{alg}
\DeclareMathOperator{\an}{an}
\DeclareMathOperator{\An}{An}
\DeclareMathOperator{\CAlg}{CAlg}
\DeclareMathOperator{\colim}{colim}
\DeclareMathOperator{\dR}{dR}
\DeclareMathOperator{\eff}{eff}
\DeclareMathOperator{\Ext}{Ext}
\DeclareMathOperator{\et}{\acute{e}t}
\DeclareMathOperator{\Fr}{Fr}
\DeclareMathOperator{\FDA}{{FDA}}
\DeclareMathOperator{\Frac}{Frac}
\DeclareMathOperator{\FSm}{FSm}
\DeclareMathOperator{\Gal}{Gal}
\DeclareMathOperator{\gr}{gr}
\DeclareMathOperator{\Hm}{H}
\DeclareMathOperator{\Hom}{Hom}
\DeclareMathOperator{\id}{id}
\DeclareMathOperator{\Ind}{Ind}
\DeclareMathOperator{\Map}{Map}
\DeclareMathOperator{\map}{map}
\DeclareMathOperator{\MHS}{MHS}
\DeclareMathOperator{\op}{{op}}
\DeclareMathOperator{\Perf}{Perf}
\newcommand{\Prl}{{\rm{Pr}^{L}}}
\newcommand{\Prlm}{{\rm{CAlg}(\rm{Pr}^{L})}}
\newcommand{\Prlo}{{\rm{Pr}^{L}_\omega}}
\newcommand{\Prloo}{{\rm{CAlg}(\Prlo)}}
\DeclareMathOperator{\proet}{pro\acute{e}t}
\DeclareMathOperator{\qcqs}{qcqs}
\DeclareMathOperator{\rig}{rig}
\DeclareMathOperator{\RigSm}{RigSm}
\DeclareMathOperator{\Sm}{Sm}
\DeclareMathOperator{\Spa}{Spa}
\DeclareMathOperator{\Spec}{Spec}
\DeclareMathOperator{\Spf}{Spf}
\DeclareMathOperator{\uhom}{\underline{Hom}}
\DeclareMathOperator{\DA}{{{DA}}}
\DeclareMathOperator{\DAet}{{{DA}_{\et}}}
\DeclareMathOperator{\Mod}{{Mod}}
\DeclareMathOperator{\QCoh}{{QCoh}}
\DeclareMathOperator{\RigDA}{{RigDA}}
\DeclareMathOperator{\RigDAeff}{{RigDA^{\eff}}}
\DeclareMathOperator{\RigDM}{{RigDM}}
\DeclareMathOperator{\Sch}{{Sch}}
\DeclareMathOperator{\SH}{{SH}}
\DeclareMathOperator{\pt}{{pt}}
\DeclareMathOperator{\fib}{fib}
\DeclareMathOperator{\cof}{cofib}
\newcommand{\icat}{$\infty$\nobreakdash-category\xspace}
\newcommand{\icats}{$\infty$\nobreakdash-categories\xspace}
\newcommand{\subicat}{sub-$\infty$-category\xspace}
\begin{document}
	\title{A motivic approach to rational $p$-adic cohomologies}
	\author{Federico Binda}
 \address{Dipartimento di Matematica ``F. Enriques'' - Universit\`a degli Studi di Milano (Italy)}
 \email{federico.binda@unimi.it}
	
	\author{Alberto Vezzani}
  \address{Dipartimento di Matematica ``F. Enriques'' - Universit\`a degli Studi di Milano (Italy)}
  \email{alberto.vezzani@unimi.it}

	\thanks{
		The authors are partially supported by the PRIN 20222B24AY ``The arithmetic of motives and $L$-functions''}
	
	\begin{abstract}
We survey over some recent applications of motivic homotopy theory in the definition and the study of $p$-adic cohomology theories. In particular, we revisit the proof of the $p$-adic weight-monodromy conjecture for smooth projective hypersurfaces in light of the motivic definition of nearby cycles and monodromy operators.
	\end{abstract}

	\maketitle
% 	%
 	\setcounter{tocdepth}{1}
 	\tableofcontents
	
 \section{Introduction}

\subsection{From the special fiber to the generic fiber}
One of the first motivations for the introduction of analytic geometry over non-archimedean fields is the attempt of defining a $p$-adic Weil cohomology  theory (as opposed to the $\ell$-adic \'etale cohomology with $\ell\neq p$) for varieties over a finite field $k$ of characterstic $p$, and the study of the associated zeta functions. 

Following ideas of Dwork, Monsky and Washnitzer \cite{mw-fc1} introduced a way to attach to an affine, smooth variety over $k$ a vector space $H^i_{MW}(X/K)$ over $K=\Frac W(k)$. This can be thought as an ``overconvergent'' de Rham cohomology of a (dagger rigid analytic) generic fiber of a weakly complete smooth  lift of $X$ over $W(k)$. 

We may interpret Monsky-Washnitzer's plan was as a way to define a ``functor''
\begin{equation}\label{squiggly}
 \Sm/k \leadsto \FSm/\mcO_K \leadsto \RigSm/K \leadsto \RigSm^\dagger/K    
\end{equation}
with $K$ being the $p$-adic non-archimedean field $W(k)[1/p]$, and where $\Sm$ [resp. $\FSm$ resp. $\RigSm$ resp. $\RigSm^\dagger$] stands for the category of smooth varieties [resp.  formal schemes, resp.  rigid analytic variety, resp. dagger varieties] over the the base.

Nonetheless, of all the arrows sketched above, only the middle one is an actual functor. As a matter of fact, the existence of some formal lift is only granted locally, for affine varieties $X$. It is not even enough to work locally (that is, considering the Zariski topos): indeed, even if affine smooth lifts are  unique up to isomorphism,  their ``weak completions'' depend on a choice of coordinates for $X$. The same ambiguity exists when trying to lift maps between affine smooth varieties.

It was only thanks to Berthelot's introduction of the overconvergent site \cite{berth-pre} that   Monsky--Washnitzer cohomology could be turned into a well-defined functorial cohomology theory, extendable to arbitrary varieties over $k$ and with the usual properties of a Weil cohomology theory (a K\"unneth formula, finite dimensionality etc.).

We propose an alternative way to ``straighten up'' the squiggly arrows of \eqref{squiggly} based on motivic homotopy theory. Indeed, by results of Elkik \cite{elkik} and Arabia \cite{arabia}, we know that potential lifts of maps are unique,  \emph{up to homotopy}. Such results suggest that one should consider a more flexible environment, in which not only can one work Zariski-locally (or even \'etale locally) but also \emph{up to homotopy}. More or less by definition, this environment is the so-called category of (derived, \'etale, rational) motives that we will denote by $\DA$ or $\RigDA$ or $\RigDA^\dagger$  in the algebraic resp. analytic  resp. dagger setting.

The aim of \Cref{sec:fromktoK}, in which we  collect results from \cite{vezz-MW,lbv,ev} is to sketch how one can prove the following results.

\begin{prop}
    There is a functor
$$
\xi\colon\Sm/k\to \DA(k)\to \RigDA^\dagger(K)
$$
Whenever $X/k$ admits a weak formal smooth lift $\mfX^\dagger$ over $W(k)$, then $\xi(X)\simeq \mathsf{M}(\mfX^\dagger_K)$.
\end{prop}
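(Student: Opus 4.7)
The plan is to realize the composite
$$
\xi\colon \Sm/k \xto{M} \DA(k) \isofrom \mathbf{FDA}^\dagger(\mcO_K) \xto{\rig^\dagger} \RigDA^\dagger(K),
$$
where $\mathbf{FDA}^\dagger(\mcO_K)$ denotes a suitable \icat of ``overconvergent formal motives'', built from the site $\FSm^\dagger/W(k)$ of smooth weak formal schemes over $W(k)$ by imposing \'etale descent and the relevant relative-interval invariance. The middle arrow, to be established as an equivalence, is induced by the mod-$\mfp$ reduction functor; the rightmost arrow is induced by the dagger generic fibre $\Spf A^\dagger \mapsto \Spa(A^\dagger \otimes_{W(k)} K)$.

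As a first step I would upgrade both the reduction and generic-fibre functors from the underlying weak formal sites to presentable stable \icats of presheaves, check that each is compatible with the relevant \'etale topology and with the chosen interval object, and conclude that they descend to the motivic quotients. This yields the two outer arrows of the diagram with no serious work and reduces the proposition to the construction and study of the middle equivalence.

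The crux is to show that the reduction functor $\mathbf{FDA}^\dagger(\mcO_K) \to \DA(k)$ is an equivalence. Elkik's theorem provides essential surjectivity: every affine smooth $k$-scheme admits a weak formal smooth lift, so $\Sm/k$ is Nisnevich-locally in the image of reduction, and descent propagates this to all smooth schemes. For full faithfulness I would invoke Arabia's refinement that morphisms between weak formal smooth lifts of a given morphism are unique up to homotopy. Since the motivic localization inverts such homotopies, mapping spaces on the two sides agree at least on affine smooth test objects, and one bootstraps to the general case via descent.

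With the equivalence in hand, the compatibility claim follows by unwinding: if $X$ admits a weak formal smooth lift $\mfX^\dagger$, then $M(\mfX^\dagger) \in \mathbf{FDA}^\dagger(\mcO_K)$ maps to $M(X)$ under reduction and to $\mathsf{M}(\mfX^\dagger_K)$ under $\rig^\dagger$, so by definition of $\xi$ one obtains $\xi(X) \simeq \mathsf{M}(\mfX^\dagger_K)$. The main obstacle is the full-faithfulness half of the reduction equivalence: one must prove that the motivic mapping space between two smooth $k$-schemes is faithfully computed by homotopy classes of maps between their overconvergent lifts. This requires a careful orchestration of Elkik--Arabia, interval-localization over $\mcO_K$, and \'etale descent, and is where the bulk of the technical work of \cite{vezz-MW,lbv,ev} lies.
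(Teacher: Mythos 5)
Your overall architecture is close in spirit to the paper's, but the specific route through a category $\FDA^\dagger(\mcO_K)$ of ``overconvergent formal motives'' is not what the paper does, and the step on which you lean hardest is where a genuine gap lies. The paper factors $\xi$ as $\Sm/k\to\DA(k)\simeq\FDA(\mcO_K)\xto{(-)^{\rig}}\RigDA(K)\simeq\RigDA^\dagger(K)$: the first equivalence (\Cref{thm:formal-mot-alg-mot}) concerns \emph{ordinary} formal motives and is a soft consequence of the localization triangle (the open complement of $\mfS_\sigma$ in $\mfS$ is empty), with only elementary infinitesimal lifting of smooth affines entering; the overconvergence is injected only at the end through the separate equivalence $\RigDA^\dagger(K)\simeq\RigDA(K)$ (\Cref{thm:dagger_or_not_dagger}), whose proof in \cite{vezz-MW} goes by approximating morphisms of rigid spaces by overconvergent ones up to explicit $\B^1$-homotopies. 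Elkik and Arabia appear in the paper only as motivation for passing to the motivic setting, not as ingredients of the proof. The Monsky--Washnitzer compatibility at the end of your proposal is fine once the equivalences are granted, and matches the paper's unwinding: the completion of $\mfX^\dagger$ is a smooth formal model with special fiber $X$, whose Raynaud generic fiber is the rigid space underlying $\mfX^\dagger_K$.

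The gap is your full-faithfulness argument for the reduction functor $\FDA^\dagger(\mcO_K)\to\DA(k)$. Arabia's uniqueness-up-to-homotopy of lifted morphisms controls, at best, $\pi_0$ of naive homotopy classes of maps between representable presheaves. Mapping spectra in $\DA(k)$ and in your conjectural dagger-formal category are computed only after \'etale hypersheafification, interval-localization and Tate stabilization, and they carry higher and negative homotopy groups (Ext groups, \'etale cohomology contributions with rational transfers) that a statement about homotopy classes of lifts does not see; a functor can induce bijections on naive homotopy classes of representables and still fail to be fully faithful after these localizations. So ``the motivic localization inverts such homotopies, hence mapping spaces agree on affine test objects'' is an assertion, not a proof: one would have to show that the unit and counit of the reduction adjunction become invertible after all the localizations, which is precisely the nontrivial content that the paper obtains by a different mechanism (localization for $\FDA(\mcO_K)\simeq\DA(k)$, dagger approximation for $\RigDA^\dagger(K)\simeq\RigDA(K)$). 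As written, your middle equivalence is neither in the cited literature nor established by your sketch, so the proposal does not yet yield the proposition.
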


We can use this functor to define Monsky--Washnitzer/rigid cohomology and prove even new features of them. In what follows, we let $\Fr/S$ be the category of proper frames over $S$ in the sense of Berthelot.

\begin{thm}\label{intro:rig}
    Let $S$ be a scheme over $k$. There is a functor
    $$
\xi\colon\Sm/S\to\DA(S)\to\lim_{(T,\bar{T},\mfP)\in\Fr/S}\RigDA^\dagger(]{T}[^\dagger_{\mfP})
    $$
   which, composed to an overconvergent relative de Rham functor gives rise to a cohomology theory: 
    $$
    R\Gamma_{\rig}\colon  \colon\Sm/S\to\DA(S)\to\lim_{(T,\bar{T},\mfP)\in\Fr/S}\QCoh(]{T}[^\dagger_{\mfP})^{\op}.
    $$
    computing (relative) rigid cohomology $R\Gamma_{\rig}(X/(T,\bar{T},\mfP))$.
    
    In particular, if $S=\Spec k$ we get an absolute Monsky--Washnitzer realization:
    $$
    R\Gamma_{\rig}\colon\Sm/k\to\mcD(K)^{\op}.
    $$
\end{thm}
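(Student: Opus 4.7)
The plan is to bootstrap the functor $\xi\colon\Sm/k\to\RigDA^\dagger(K)$ of the preceding proposition to a relative, frame-indexed version, and then to realise this via overconvergent de Rham cohomology. For the first arrow $\Sm/S\to\DA(S)$ I use the canonical construction sending a smooth $S$-scheme to its rational \'etale motive. For the second, I fix a proper frame $(T,\bar T,\mfP)\in\Fr/S$ and factor the desired arrow as $\DA(S)\to\DA(T)\to\RigDA^\dagger(]{T}[^\dagger_\mfP)$, where the first map is pullback along $T\to S$. The second is built by a relative version of the previous proposition: Zariski-locally on $T$ every smooth $T$-scheme admits a weak formal smooth lift inside $\mfP$, whose dagger analytic generic fibre sits over the tube $]{T}[^\dagger_\mfP$; Elkik--Arabia's uniqueness of such lifts up to homotopy, together with \'etale descent and $\B^1$-invariance in $\RigDA^\dagger$, allow these local assignments to be glued into a well-defined functor independent of choices. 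Compatibility with pullbacks along morphisms of frames is automatic by construction and assembles the collection into a functor valued in $\lim_{(T,\bar T,\mfP)\in\Fr/S}\RigDA^\dagger(]{T}[^\dagger_\mfP)$.

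Next I would introduce the overconvergent de Rham realisation. For each tube $]{T}[^\dagger_\mfP$ one has a natural functor $\dRRo{-}{K}\colon\RigDA^\dagger(]{T}[^\dagger_\mfP)\to\QCoh(]{T}[^\dagger_\mfP)^{\op}$ sending the motive of a smooth dagger analytic variety to its relative overconvergent de Rham complex; pullback-compatibility along morphisms of frames lets these assemble on the limit. Composing with $\xi$ gives the functor $R\Gamma_{\rig}$ of the theorem. To identify the output with classical relative rigid cohomology, I evaluate on a smooth $S$-scheme $X$: Zariski-locally on $T$ the base change $X_T$ admits a weak formal smooth lift to $\mfP$, and by construction $R\Gamma_{\rig}(X/(T,\bar T,\mfP))$ is computed as the overconvergent de Rham cohomology of the dagger generic fibre of such a lift, which is exactly Berthelot's definition.

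The absolute statement is then obtained by specialising to $S=\Spec k$ with the terminal frame $(\Spec k,\Spec k,\Spf W(k))$: the tube $]{\Spec k}[^\dagger_{\Spf W(k)}$ is the dagger point over $K$, so $\QCoh$ reduces to $\mcD(K)$, and one recovers Monsky--Washnitzer cohomology. The hard part is the construction of the intermediate motivic functor $\DA(T)\to\RigDA^\dagger(]{T}[^\dagger_\mfP)$ with the correct two-variable functoriality (in $X/T$ \emph{and} in the frame): since algebraic-to-dagger lifts exist only locally and are unique only up to homotopy, the gluing must be carried out genuinely inside the motivic formalism, and one has to verify that morphisms of frames induce the expected comparison maps on the dagger analytic generic fibres — this is where the flexibility of working up to $\B^1$-homotopy in $\RigDA^\dagger$ is essential.
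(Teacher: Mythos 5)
Your overall skeleton — a per-frame functor $\DA(S)\to\RigDA^\dagger(]T[^\dagger_{\mfP})$, assembled over $\Fr/S$ into the limit, then composed with an overconvergent de Rham realization, with the absolute case obtained from the frame $(\Spec k,\Spec k,\Spf W(k))$ — matches the paper. The genuine gap is in the step you yourself flag as "the hard part": the construction of $\DA(T)\to\RigDA^\dagger(]T[^\dagger_{\mfP})$ by choosing, Zariski-locally on $T$, weak formal smooth lifts and gluing them using Elkik--Arabia uniqueness "up to homotopy" together with descent and $\B^1$-invariance. Uniqueness up to (non-coherent) homotopy of lifts of objects and of maps does not by itself produce a functor, let alone an $\infty$-functor out of $\Sm/T$ from which one could then invoke the universal property of $\DA(T)$: one needs a coherent system of higher homotopies, and organizing the choice-dependent local lifts into such a system is precisely the classical obstruction recalled in the introduction (of the squiggly arrows in \eqref{squiggly} only the middle one is an actual functor). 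Saying that descent and $\B^1$-invariance "allow these local assignments to be glued into a well-defined functor independent of choices" names the difficulty rather than resolving it.

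The paper's route removes lifting altogether, which is the whole point of the motivic formalism here. For a frame one takes the (weak completion and then the) open formal subscheme $\mfU\subset\mfP^{\wedge\bar T}$ with special fiber $T$; by \Cref{thm:formal-mot-alg-mot} (a consequence of localization, i.e.\ nil-invariance of motives over formal schemes) the special-fiber functor gives an equivalence $\FDA(\mfU)\simeq\DA(T)$, so every algebraic motive over $T$ \emph{is} canonically a formal motive — no local lifts, no choices, no gluing. Composing the inverse of this equivalence with the Raynaud generic fiber functor $\FDA(\mfU)\to\RigDA(]T[_{\mfP})$ and with the dagger/non-dagger equivalence of \Cref{thm:dagger_or_not_dagger} (which likewise spares you from choosing dagger structures variety by variety) yields the per-frame functor, together with its Frobenius enrichment as in the paper; functoriality in the frame and the passage to the limit are then formal, and the de Rham step is the cited realization of \cite[Corollary 4.39]{lbv}, \cite[Corollary 3.20]{ev}. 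If you wanted to push your gluing strategy through, you would in effect have to re-prove these two categorical equivalences (or an equivalent coherence statement), so as written your argument has a hole exactly at its central step.
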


\begin{cor}
    Let $S$ be a scheme over $k$.
    \begin{enumerate}
    \item \label{berth}If $S=\Spec k$, then (absolute) Monsky--Washnitzer cohomology is well-defined, functorial, has \'etale and h-descent, and can be extended to arbitrary varieties by putting $R\Gamma_{\rig}(X/K)=R\Gamma_{\rig}(\mathsf{M}_k(X))$. It has the structure of a $\varphi$-module, and is finite dimensional whenever $X$ is a smooth quasi-compact rigid variety, or the analytification of an algebraic variety.
        \item \label{bconj}Relative rigid cohomology has \'etale descent and h-descent. If $X/S$ is smooth and proper, then $R^i\Gamma_{\rig}(X/(T,\bar{T},\mfP))$ is a vector bundle over $]T[^\dagger_{\mfP}$ and is equipped with a canonical structure of an overconvergent $F$-isocrystal. 
    \end{enumerate}
\end{cor}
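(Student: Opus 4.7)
The plan is to deduce both parts of the corollary from \Cref{intro:rig} by transporting structural properties through the realization functor. First I would observe that $\DA(S)$ is defined on arbitrary (not necessarily smooth) $S$-schemes, satisfies \'etale hyperdescent essentially by construction, and with rational coefficients satisfies h-descent by the work of Ayoub. Composing with the realization provided by \Cref{intro:rig} immediately yields the well-definedness, functoriality, \'etale descent, and h-descent claims in both parts, together with the extension of $R\Gamma_\rig$ to arbitrary varieties via $R\Gamma_\rig(X/K):=R\Gamma_\rig(\mathsf{M}_k(X))$.

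For the $\varphi$-module structure in \eqref{berth}, I would argue that absolute Frobenius is a natural endomorphism of the composition $\Sm/k\to\DA(k)\to\RigDA^\dagger(K)$, compatible with the Witt-vector Frobenius on $K$; evaluating this on cohomology produces the desired semilinear action. For finite dimensionality, the motive $\mathsf{M}_k(X)$ of a smooth quasi-compact variety is compact in $\DA(k)$, and compactness is preserved by the realization, so its image lands in the subcategory of perfect complexes over $K$, which are finite dimensional in each degree; the rigid analytic case is analogous.

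For \eqref{bconj}, the new content is the vector-bundle plus $F$-isocrystal structure in the smooth proper case. I would invoke the fact that the motive $\mathsf{M}_S(X)$ of a smooth proper morphism is strongly dualizable in $\DA(S)$, a classical feature of motivic homotopy theory. Since every symmetric monoidal realization preserves dualizability, the image in $\QCoh(]T[^\dagger_\mfP)$ is dualizable and hence a perfect complex. Combining this with Hodge-to-de Rham degeneration in the dagger/overconvergent setting and smooth-proper base change, one concludes that the cohomology sheaves are locally free of finite rank. Running the same dualizability argument equivariantly with respect to relative Frobenius produces the overconvergent $F$-isocrystal structure.

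The main obstacle, in my view, is the last passage from \emph{perfect complex} to \emph{vector bundle concentrated in a single degree} in the relative case: dualizability alone only yields a perfect complex, and upgrading this to a single locally free sheaf forces one to invoke the genuinely analytic degeneration inputs available only in the smooth proper overconvergent setting, together with the compatibility of the motivic realization with the Hodge filtration on the target.
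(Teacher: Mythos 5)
Your reduction of the descent, functoriality and extension-to-arbitrary-varieties claims to the motivic realization of \Cref{intro:rig} is indeed how the paper proceeds, and your use of dualizability of $\mathsf{M}_S(X)$ for smooth proper $X/S$ matches the paper's treatment of point \eqref{bconj}. But your finiteness argument in point \eqref{berth} has a genuine gap. You assert that ``compactness is preserved by the realization, so its image lands in the subcategory of perfect complexes.'' Preservation of compact objects by $R\Gamma_{\rig}$ is not formal: it is essentially equivalent to the finite-dimensionality statement you are trying to prove, so the argument is circular as written. Moreover, unlike $\DA(K;\Q)$ over a field (where compact implies dualizable, which is why the analytification of an algebraic variety is fine), compact objects of $\RigDA(K)$ are \emph{not} dualizable in general -- the motive of a typical smooth affinoid is compact but not dualizable -- so no dualizability argument covers the case of a general smooth quasi-compact rigid variety. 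In the paper's references this case is \cite[Theorem 4.46]{lbv}, whose proof rests on the spreading-out theorem and the resulting ``motivic tubular neighborhoods'' around rational points (cf.\ \Cref{rmk:spreading_ex}), together with the dualizable (smooth proper) case; some input of this kind is unavoidable and is missing from your proposal.

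For point \eqref{bconj} your route also diverges where it matters. The step ``dualizable in $\QCoh(]T[^\dagger_{\mfP})$ hence a perfect complex'' is not a formality in the solid/overconvergent setting: it is Andreychev's identification of dualizable solid modules with perfect complexes \cite{andreychev}, which the paper flags as the crucial input. More importantly, the upgrade from perfect complex to vector bundles is not obtained from Hodge-to-de Rham degeneration: the paper first produces the overconvergent ($F$-)isocrystal structure -- the Frobenius comes from the $\varphi$-fixed-point enrichment already built into the realization of \cite[Proposition 3.19]{ev} (see \Cref{sec:Frob}), and the connection/stratification from working in the limit over all frames -- and local freeness of the coherent cohomology sheaves then follows from the existence of this (overconvergent) connection, in the usual way. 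Your proposed appeal to degeneration and to a ``compatibility of the motivic realization with the Hodge filtration'' invokes inputs that are neither available in this generality nor needed.
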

We note that point \eqref{berth} was essentially established by Berthelot and Chiarellotto--Tsuzuki, so that motivic methods help re-visit classical resuls. On the other hand, point \eqref{bconj} answers positively to an open conjecture by Berthelot which was previously known only in the (log-)liftable case, and for a special class of frames \cite{berth-pre,tsuzuki_coherence,shiho_relative3}.

\subsection{From generic fibers to special fibers}
One of the classical ways to define an analytic variety over a non-archimedean field arises when one has a generically smooth family $X_t$ of varieties over a curve $C$ over a field $k$ of characteristic $0$. When restricting around an infinitesimal ``punctured neighborhood'' of a point $0\in C$,  one ends up with a formal model of a smooth rigid analytic variety over $k(\!(t)\!)$, having a (potentially singular) special fiber $X_0$ over $k$. By standard resolution of singularities, one may even suppose that such special fiber is semi-stable (at least after an extension of scalars).

By classic work by Clemens \cite{clemens}, Schmid \cite{schmid}, Steenbrink \cite{steenbrink}, Saito \cite{saito-mdhp}  et al.~ when $k=\C$, the limit Hodge cohomology of the elements of the family $\lim_{t\to 0}H(X_t)$ can be expressed in terms of a nearby cycle functor $\Psi X$, introduced either via passing to a universal covering and using the functor $\iota^*j_*$, or as an explicit bi-complex of Hodge modules depending on the geometry of $X_0$. It comes equipped with a monodromy operator.

In the arithmetic situation, by replacing the punctured complex disc and the non-archimedean field $\C(\!(t)\!)$ with   a local field $K$, this last procedure has been translated to the study of $\ell$-adic cohomology by Rapoport--Zink \cite{Rapoport_Zink}, interpreting the Weil-Deligne cohomology of the generic fiber in terms of $\ell$-adic cohomology of a bi-complex $\Psi X$, equipped with a monodromy operator. Similarly, this procedure was adapted to the $p$-adic case, with the introduction of a so called Hyodo--Kato cohomology,  by work of Hyodo--Kato \cite{HK94}, Mokrane \cite{mokrane}, Nakkajima \cite{Nakka}, Gro\ss e-Kl\"onne \cite{GK05}, Colmez--Niziol \cite{CN19} and Ertl--Yamada \cite{EY19}. Once again, Hyodo--Kato cohomology of the (rigid analytic) generic fiber can be described as rigid cohomology of a bi-complex $\Psi X$, and comes equipped with a monodromy operator. 

In all three circumstances, the strategy is then to consider a ``nearby cycle functor''
$$
\Psi\colon \RigSm/K\leadsto \Sm/k + \text{monodromy}
$$
and then define/study a ``limit'' cohomology for $X/K$ via a cohomology theory for $\Psi X$. Once again, this functor is not really well-defined, as typically the  nearby cycle functor is given by some bi-complex of objects in the category of \emph{coefficients} (Hodge modules, $\ell$-adic sheaves or $\varphi$-modules) albeit defined using the geometry of the special fiber.% in terms of the geometry of the special fiber of $X$. 

Also in this setting, working with a motivic category allows one to straighten up this arrow again into a well-defined functor, that we will discuss in \Cref{sec:fromKtok} based on results of \cite{bgv, pWM}:
\begin{thm}\label{thm:intro-MON}
    Let $C$ be an algebraically closed non-archimedean field, with valuation group equal to $\Q$ and residue field $k$. There is an equivalence of categories
    $$
    \RigDA(C)\simeq \DA(k)_N
    $$
    where the category on the right hand side is the category of locally nilpotent  ``monodromy maps'' $M\xto{N} M(-1)$ with $M$  in $\DA(k)$. 

    In particular, if $X$ is a quasi-compact rigid variety or an algebraic variety over a local field $K$, then up to replacing $K$ with a finite extension, $X$ is canonically associated to a motive $\Psi X\in\DA(k)$ equipped with a monodromy operator.
\end{thm}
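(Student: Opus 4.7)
The plan is to build a motivic nearby-cycle functor $\Psi\colon \RigDA(C)\to \DA(k)$, show it canonically enhances through a monodromy operator to a functor valued in $\DA(k)_N$, and then exhibit a quasi-inverse. The hypothesis that $C$ is algebraically closed with value group $\Q$ will be used throughout to ensure that semistable models and tame covers exist internally to $\RigDA(C)$ without any further base change. First, I would define $\Psi$ on the motive of a quasi-compact smooth rigid variety $X/C$ by choosing a semistable formal model $\mfX/\mcO_C$ (available after admissible blow-up, since all the needed tame roots are already present over $C$) and setting $\Psi X := \mathsf{M}_k(\mfX_s)$. The delicate step is independence of the model: two semistable models are dominated by a common admissible blow-up, so one must prove that $\mathsf{M}_k(\mfX_s)$ is invariant under such blow-ups. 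This reduces, via $\A^1$-invariance and \'etale descent in $\DA(k)$, to a local computation on strata, where the exceptional divisors of admissible blow-ups of semistable models have $\A^1$-contractible tubular neighborhoods. Extending by colimits then yields $\Psi$ on all of $\RigDA(C)$.

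Next, I would produce the monodromy by exploiting the tower of Kummer covers $[n]\colon \G_m^{\an}\to \G_m^{\an}$, which is realized entirely inside $\RigDA(C)$ thanks to the $\Q$-valuation. Applying $\Psi$ equips each $\Psi X$ with a continuous $\hat{\Z}(1)$-action in $\DA(k)$, whose logarithm supplies the monodromy map $N\colon \Psi X\to \Psi X(-1)$; local nilpotence on compact generators follows from the bounded combinatorial length of semistable special fibers, mirroring the classical nilpotence on the Steenbrink weight filtration. For the quasi-inverse $\Xi\colon \DA(k)_N\to \RigDA(C)$, I would identify $\DA(k)_N$ with (co)modules for a ``monodromy'' (co)monad on $\DA(k)$ arising from the same Kummer tower, for which $\Psi$ is the canonical forgetful functor; the adjoint $\Xi$ is then obtained by a Barr--Beck induction, and $\Psi\dashv\Xi$ turns out to be an equivalence after checking conservativity of $\Psi$ and the monadicity hypothesis, both reducing to explicit computations on semistable compact generators.

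The hard part is the well-definedness in the first step: proving coherently, in the $\infty$-categorical sense, that $\mathsf{M}_k(\mfX_s)$ is invariant under admissible blow-ups between semistable models requires a motivic analogue of the Steenbrink vanishing lemma together with the full six-functor formalism on rigid and formal motives developed in \cite{ev}. Once this is in place, the ``in particular'' claim is immediate: any quasi-compact algebraic or rigid variety $X$ over a local field $K$ acquires semistable reduction over some finite extension $K'/K$, and base change to $C=\widehat{\overline{K}}$ together with the equivalence produces a canonical $\Psi X\in\DA(k)_N$.
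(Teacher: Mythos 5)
There is a genuine gap, and it occurs already at your first step. The assignment $\Psi X:=\mathsf{M}_k(\mfX_s)$ for a chosen semistable model is neither well defined nor the right object. It is not invariant under admissible blow-ups: blowing up a closed point in the special fiber of a smooth formal model of a curve yields another semistable model whose special fiber is the old one glued to a $\P^1$ at a point, and its motive acquires an extra Tate summand $\Q(1)[2]$; so the ``motivic Steenbrink vanishing'' you hope for is false as stated. Worse, even when the model is canonical the special fiber is not the nearby cycle: for the Tate curve $E=\G_m^{\an}/q^{\Z}$ the motive of the nodal special fiber has one-dimensional $H^1_{\rig}$, whereas $\Psi\mathsf{M}(E)$ must realize (cf.\ \Cref{quick}) to the two-dimensional Hyodo--Kato $H^1$, matching $H^1_{\dR}$ of the generic fiber. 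So the functor you construct, even if it could be made coherent, is not the one in the statement; the nearby cycle of a semistable scheme is a Rapoport--Zink/Steenbrink-type object built from the strata, not $\mathsf{M}_k(\mfX_s)$. The subsequent steps inherit this problem: the ``$\hat{\Z}(1)$-action via the Kummer tower whose logarithm is $N$'' is not available in this rational, $\infty$-categorical setting (there is no inertia action on $\RigDA(C)$ for fixed $C$, and no logarithm to take), and the monad you want $\Psi$ to be monadic for is misidentified: under the equivalence $\DA(k)_N\simeq\Mod_{\one\oplus\one(-1)[-1]}(\DA(k))$ the underlying-object functor of the module category is $\fib(N)$, i.e.\ the specialization $\chi$, not the forgetful functor $\Psi$, which instead is base change along the augmentation.

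The paper's proof (\Cref{thm:bgv}, after \cite{bgv}) runs in the opposite direction and avoids all choices of models at the object level. One starts from the left adjoint $\xi\colon \DA(k)\simeq\FDA(\mcO_C)\to\RigDA(C)$ (the first equivalence by localization, \Cref{thm:formal-mot-alg-mot}), checks the Barr--Beck--Lurie hypotheses for $\xi\dashv\chi$ (compact generators go to compact generators; the projection formula is formal since compact objects of $\DA(k)$ are dualizable), computes $\chi\one\simeq\one\oplus\one(-1)[-1]$ as a split square-zero extension (this is where the pseudo-uniformizer enters), and identifies $\Mod_{\chi\one}(\DA(k))$ with $\DA(k)_N$. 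Semistable reduction/de Jong alterations are used only once, and only to show $\RigDA(C)^{\gr}=\RigDA(C)$ when $C$ is algebraically closed (\Cref{prop:semiss_is_gr}); the ``in particular'' clause then follows from this together with spreading out, as you indicate. If you want to salvage your outline, you would have to replace your step one by this monadic description (or by a genuinely model-independent Steenbrink-type construction), since no argument of the form ``motive of the special fiber, checked independent of the model'' can produce the correct $\Psi$.
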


This provides a geometric (yet, motivic) universal way to perceive and define the various functors $\Psi$ on each realization, and even prove new facts about the associated ``limit cohomologies''.

\begin{cor}
    Hyodo--Kato cohomology for rigid analytic varieties over $C$  can be defined as $R\Gamma_{\rig}(\Psi X)$ without recourse to log-geometry. It is canonically equipped with a weight filtration and fits in a Clemens-Schmid chain complex.
\end{cor}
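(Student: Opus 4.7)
The plan is to combine the two main theorems: compose the nearby cycle functor $\Psi$ from \Cref{thm:intro-MON} with the rigid realization $R\Gamma_{\rig}$ from \Cref{intro:rig}. For a quasi-compact rigid variety $X$ over $C$, set $R\Gamma_{HK}(X):=R\Gamma_{\rig}(\Psi X)$. This is automatically an object of $\mcD(K)^{\op}$ equipped with both a Frobenius from the rigid realization and a nilpotent endomorphism transported from the monodromy $\Psi X\xto{N}\Psi X(-1)$, hence a canonical $(\varphi,N)$-module structure; no logarithmic data appear in the construction. To verify agreement with classical Hyodo--Kato cohomology, I would reduce via de Jong alterations to the case in which $X$ admits a strictly semistable formal model $\mfX$ with special fiber $X_0=\bigcup_{i\in I}D_i$. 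In that case $\Psi X$ admits a canonical description in $\DA(k)$ as the total complex of the simplicial motive $\mathsf{M}(D_\bullet)$ of iterated intersections $D_J:=\bigcap_{i\in J}D_i$, with the monodromy $N$ induced by the difference of the two face-type maps; applying $R\Gamma_{\rig}$ term by term recovers the Mokrane--Nakkajima--Gro\ss e-Kl\"onne bicomplex that computes classical Hyodo--Kato cohomology, together with its Frobenius and monodromy.

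The weight filtration is obtained from Bondarko's weight structure on $\DA(k)$, whose heart is generated by Chow motives of smooth projective $k$-varieties. The simplicial decomposition of $\Psi X$ above exhibits it as an object of bounded weight, and the associated filtration, indexed by the cardinality of $J$, transports through $R\Gamma_{\rig}$ to a functorial filtration on $R\Gamma_{HK}(X)$ that matches, weight by weight, the classical monodromy-weight filtration of Mokrane.

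For the Clemens--Schmid chain complex, the strategy is to assemble the natural specialization map $\mathsf{M}(X_0)\to \Psi X$ in $\DA(k)$ (corresponding under \Cref{thm:intro-MON} to the canonical comparison between the motive of the closed fiber and the nearby cycle of the generic fiber), the monodromy $\Psi X\xto{N}\Psi X(-1)$, and the dual Gysin-type morphism built from the components $D_i$, into a bounded complex of motives in $\DA(k)$ which, up to twists and shifts, motivically encodes the Clemens--Schmid sequence. Applying $R\Gamma_{\rig}$ then yields the desired Clemens--Schmid chain complex for $R\Gamma_{HK}(X)$ and rigid cohomology of $X_0$, linked by $N$. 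The crux of the whole argument, and the main obstacle, is the initial identification: matching the motivic simplicial decomposition of $\Psi X$ with the classical nearby-cycle bicomplex while keeping track of Frobenius and monodromy compatibilities under the equivalence $\RigDA(C)\simeq\DA(k)_N$. Once this is in place, both the weight filtration and the Clemens--Schmid complex follow from the weight structure on $\DA(k)$ and from formal manipulations with the monodromy operator.
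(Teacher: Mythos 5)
Your definition $R\Gamma_{HK}(X):=R\Gamma_{\rig}(\Psi X)$, with Frobenius coming from the rigid realization and $N$ transported from $\DA(k)_N$, is exactly the paper's, and your instinct that the weight filtration should come from the motivic weight structure is also correct. The genuine gap is in your comparison with classical Hyodo--Kato cohomology. The assertion that for a strictly semistable model ``$\Psi X$ is the total complex of the simplicial motive $\mathsf{M}(D_\bullet)$ of iterated intersections, with $N$ the difference of the two face-type maps'' is not correct as stated: the untwisted alternating complex of strata motives computes the motive of the special fiber $X_0$, not $\Psi X$; the nearby cycle involves Tate twists and Gysin maps, and what actually has the Rapoport--Zink/Steenbrink shape is the \emph{weight complex} of $\chi\mathsf{M}(X)$ (the two-sided complex with terms $\bigoplus D_I(\pm\ast)[\pm 2\ast]$ quoted in the proof of \Cref{rmk:split} from \cite[Example 4.72]{bgv}), which controls cohomology only through the weight spectral sequence and does not determine the motive with its monodromy. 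Even granting such a description, matching it ``term by term, keeping track of Frobenius and monodromy'' with the Mokrane/Gro\ss e-Kl\"onne bicomplex is precisely the hard log-crystalline comparison that the corollary claims to dispense with; you correctly identify this as the crux but supply no argument for it, so the proof is incomplete at its decisive step.

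The paper avoids this entirely by a detection principle: by \Cref{thm:bgv} and Barr--Beck--Lurie, monoidal colimit-preserving extensions of a fixed realization $F\colon\DA(k)\to\catD^{\otimes}$ to $\RigDA^{\gr}(C)\simeq\DA(k)_N$ are classified by $\Ext^1_{\catD}(F(\one(-1)),\one)$, which for $(\varphi,N)$-modules is one-dimensional, spanned by the Kummer extension; hence the motivic functor and the classical (log-geometric, Lazda--Pal style) Hyodo--Kato realization are compared by evaluating a \emph{single} object, the Kummer motive, a direct summand of the motive of the Tate curve. This is \Cref{quick}, quoting \cite[\S 3.2, \S 4.8, \S 4.9]{bgv}, and it is what makes the statement ``without recourse to log-geometry'' honest. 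Similarly, the Clemens--Schmid chain complex is not assembled by hand from the components $D_i$: it is obtained by applying an arbitrary realization to the diagram \eqref{eq:calotte} of natural transformations $\iota^*$, $\iota^!$, $\chi^{\rm alg}j^*$, $\Psi\An^*j^*$ and $N$, which is produced from the localization triangle over $\mcO_K$, the identification $\chi\simeq\fib(N)$ of \Cref{rmk:identifadj}, absolute purity and duality; this is independent of any choice of strata or simplicial resolution. Finally, note that over algebraically closed $C$ no de Jong reduction is needed for the definition itself, since $\RigDA(C)=\RigDA(C)^{\gr}$ by \Cref{prop:semiss_is_gr}, and the weight filtration should likewise be taken from the Chow weight structure on $\RigDA^{\gr}(C)\simeq\DA(k)_N$ of \cite[\S 4.3]{bgv} rather than routed through the unproven strata identification.
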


Once again, while the first point of the previous corollary admits some variations which were already known, the second point is new, and answers positively to a conjecture by Flach--Morin \cite{flach-morin}.

\subsection{On the $p$-adic weight-monodromy conjecture}
As a matter of fact, this ``motivic'' intepretation of the Hyodo--Kato cohomology also sheds some new light on the   proof of the $p$-adic weight monodromy conjecture for smooth projective hypersurfaces, obtained in  \cite{pWM} based on Scholze's strategy \cite{scholze}. The most precise statement goes as follows:

\begin{thm}[{\cite{pWM}}]
    Let $K$ be a finite extension of $\Q_p$ and $Y$ be a smooth scheme-theoretic complete intersection inside a projective smooth toric variety over $K$. Then the p-adic weight-monodromy conjecture holds for $Y$.
\end{thm}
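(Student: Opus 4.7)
The plan is to transplant Scholze's $\ell$-adic perfectoid tilting argument \cite{scholze} into the $p$-adic motivic framework set up by \Cref{thm:intro-MON}. Since Hyodo--Kato cohomology factors as $R\Gamma_{\rig}\circ\Psi$, the conjecture for $Y$ amounts to the claim that, on the rigid realization of the motive $\Psi Y\in\DA(k)_N$, the monodromy filtration determined by $N$ coincides, up to the standard shift by cohomological degree, with the Frobenius weight filtration.

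First, fix a perfectoid algebraically closed extension $C$ of $K$ and consider its tilt $C^\flat$, which has the same residue field $k$. Applying \Cref{thm:intro-MON} on both sides yields a chain of equivalences $\RigDA(C)\simeq \DA(k)_N\simeq \RigDA(C^\flat)$, and hence a canonical motivic tilting equivalence that intertwines the two nearby-cycle functors $\Psi$ and preserves the monodromy operator $N$ on the nose. The key compatibility to check is that this equivalence also matches, up to a Tate twist, the two Frobenius actions induced on the rigid realization, so that the associated weight filtrations are identified.

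Next I would carry out the geometric approximation. The projective smooth toric variety $X$ containing $Y$ admits a canonical perfectoid cover obtained by extracting $p$-power roots of the monomial coordinates, whose tilt is a projective smooth toric variety $X^\flat$ over $C^\flat$. Following Scholze, one then produces a smooth complete intersection $Y^\flat\subset X^\flat_{C^\flat}$ whose defining equations are perfectoid-close to those of $Y$; the hypothesis that $Y$ be a complete intersection in a smooth toric ambient is exactly what guarantees the approximation is controlled enough to preserve both the cohomology type and the smoothness. Under the tilting equivalence of step one, this produces an isomorphism $\Psi Y\simeq\Psi Y^\flat$ in $\DA(k)_N$ compatible with $N$ and the Frobenius-weight data.

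Finally, $Y^\flat$ descends (after a further finite extension) to a smooth projective variety over a discretely-valued local field of equal characteristic $p$, and hence comes from a family over a global function field. Weight-monodromy in this situation is Deligne's theorem from Weil II in the $\ell$-adic setting, and transfers to rigid cohomology by the independence-of-$\ell$ of the weight filtration on smooth proper varieties over function fields; transporting the conclusion back through the tilting equivalence yields the $p$-adic conjecture for $Y$. The principal difficulty will be the first step, namely establishing the precise compatibility of the Frobenius-induced weight filtrations under the motivic tilting equivalence, since $N$ is already built into the equivalence but the Frobenius data on the two sides arise from genuinely different geometric origins. Once that compatibility is in place, the toric approximation is essentially geometric and Deligne's theorem supplies the equal-characteristic input, so Scholze's strategy runs verbatim in the motivic $p$-adic setting.
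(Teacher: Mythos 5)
Your overall template (tilt via perfectoid toric covers, reduce to equal characteristic $p$, invoke Deligne via Crew and Lazda--Pal) is indeed the strategy of the paper, but your second step contains a genuine gap that the actual proof is largely devoted to filling. Scholze's approximation (\Cref{tiltScholze}) does \emph{not} produce a smooth complete intersection $Y^\flat$ with $\Psi Y\simeq \Psi Y^\flat$: the approximating closed subvariety $Z^\flat\subset X^\flat_\Sigma$ has the right codimension but is in general singular, and there is no a priori comparison of its cohomology (or motive) with that of $Y$. What one actually gets, as in \Cref{cons}, is only a \emph{map} $\mathsf{M}^{\rm coh}(Y^{\an})\to \mathsf{M}^{\rm coh}(T^{\flat\an})^\sharp$, obtained by composing the motivic tubular neighborhood of \Cref{tubularWM}, the equivalence $\mathsf{M}^{\rm coh}(X_{\Sigma\infty})\simeq\mathsf{M}^{\rm coh}(X_\Sigma)$ of \Cref{A=Ainf}, the tilting equivalence, and a de Jong alteration $T^\flat\to Z^\flat$ (needed precisely because $Z^\flat$ is not smooth). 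The heart of the argument is then to show that this map exhibits $\mathsf{M}^{\rm coh}(Y^{\an})$ as a \emph{direct summand} of $\mathsf{M}^{\rm coh}(T^{\flat\an})^\sharp$: this uses that the map is a map of ring objects, Poincar\'e duality on both sides, the computation $\Hom(\mathsf{M}^{\rm coh}(Y^{\an}),\one(-d)[-2d])\simeq\Q$ of \Cref{rmk:split} (itself a weight-structure argument requiring semi-stable reduction), and a Chow-theoretic degree computation showing the relevant map to $\one(-d)[-2d]$ is non-zero. Since weight-monodromy passes to direct summands of $(\varphi,N)$-modules and holds for the smooth proper $T^\flat$ over an equicharacteristic local field, this suffices; your claim that the approximation ``preserves the cohomology type and the smoothness'' is simply not available and cannot be repaired without this duality/direct-summand mechanism.

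Two smaller points. The Frobenius/weight compatibility under the motivic tilting equivalence, which you correctly flag as the principal difficulty but leave open, is resolved in the paper by the detection principle: monoidal extensions of a realization from $\DA(k)$ to $\RigDA^{\gr}(K)$ are classified by $\Ext^1(F(\one(-1)),\one)$ and can be pinned down by testing on the Kummer motive (the $H^1$ of the Tate curve), which is how \Cref{quick} identifies the composite of tilting with the Hyodo--Kato realization with Lazda--Pal's functor. Also, working over an algebraically closed $C$ discards the Frobenius structure you need for weights; the paper instead tilts over the (non algebraically closed) perfectoid field $\widehat{K(\varpi^{1/p^\infty})}$, using that the weight-monodromy statement is insensitive to such ramified extensions and that the tilting equivalence of \Cref{cor:tilting} is Galois-equivariant.
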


Indeed, one of the technical ingredients of the main proof of \cite{pWM}, is the compatibility of the motivic tilting equivalence $\RigDA(C)\simeq\RigDA(C^\flat)$ from \cite{vezz-fw}  with the Hyodo--Kato realization functors based on log-geometry. We will show that this check can alternatively be done based on the identification of \Cref{thm:intro-MON} and some detection principles developed in \cite{bgv}.

We will  revisit the strategy of the above theorem in \Cref{sec:wm}, by making some generalizations of certain intermediate results in light of the equivalence of \Cref{thm:intro-MON}. As such, this section contains some original results (see the proof of \Cref{original}).

\subsection{Acknowledgements}
We thank the organizers and the participants of the conference ``Regulators V'' for their hospitality and the useful interactions, and an anonymous referee for their comments and  corrections to an early version of this paper.

\section{Properties of motivic categories}
In this section, we list all the ``formal'' properties of motivic categories that we use for our explicit, cohomological applications later on. We will survey them based on \cite{ayoub-weil, agv, lbv, bgv}.
 \subsection{Basic definitions}
Motivic sheaves are defined by means of a universal property, that we now recall.

 \begin{dfn}\label{def:mot_sheaves} We will consider the following \icats of motivic sheaves.\label{dfn:motives}
			\begin{enumerate}
				\item \label{eq:algebraic_motives}
				Let $S$ be a scheme locally of finite Krull dimension. We write $\DA(S) = \DAet(S;\Q)$ for the \icat of hypercomplete \'etale motives over~$S$, with rational coefficients as defined in~\cite[Section 3]{AyoubEt} by putting~$\Lambda=\Q$. It is constructed from the full \subicat $\DA^{\eff}_{\et}(S;\Q)$ of $\mathrm{Shv}_{\et}(\Sm/S; \mcD(\Q))$  spanned by those objects which are local with respect to the collection of maps $\Q(\A^1_Y) \to \Q(Y)$, by formally inverting the tensor product operation with the Tate twist $\one(1)$.  
				\item\label{eq:analytic_motives} Let $S$ be a rigid analytic space locally of finite Krull dimension, in the sense of \cite[Definition II.2.2.18]{fujiwara-kato}. We write $\RigDA(S) = \RigDA_{\et}(S; \Q)$ for the \icat of hypercomplete rigid analytic motives over~$S$ with respect  to the étale topology, as defined in~\cite[Definition 2.1.15]{agv}. Its construction is analogous to~\eqref{eq:algebraic_motives}, by replacing~$\Sm/S$ with~$\RigSm/S$, and the affine line~$\A^1$ by the closed unit disk~$\B^1$.  %
			\end{enumerate}
			
			These are stable, compactly generated monoidal \icats, underlying a six-functor formalism, see~\cite{ayoub-th1,ayoub-th2,agv}.
			%If $Y$ is a scheme or a rigid analytic space which is smooth over $S$, we sometimes write $\mathsf{M}_S(Y)$ (or $\mathsf{M}(Y)$ for brevity) to denote its image under the Yoneda functor in the corresponding motivic \icat. 
		\end{dfn}
		\begin{rmk}
			In the setting of \eqref{eq:analytic_motives}, we will mostly be interested in the case where $S= \Spa(K,\mcO_K)=\Spf(\mcO_K)^{\rig}$, for~$K$ a complete non-archimedean field with a perfect residue field. Here, for a  quasi-compact and quasi-separated
formal scheme $\mfX$, we write $\mfX^{\rig}$ or $\mfX_\eta$ for the Raynaud generic fiber. We will simply write $\RigDA(K)$ for $\RigDA(\Spf({\mcO_K)^{\rig}})$. Note that in this case by \cite[Lemma 2.4.18]{agv} the non-hypercomplete version of the construction of $\RigDA$ agrees with the one considered above, and the reader can simply ignore the difference between the two.
		\end{rmk}
\subsection{Weil cohomology theories in algebraic and analytic geometry} We follow the exposition of Ayoub \cite[\S 1]{ayoub-weil}, and quickly recall the notion of mixed  Weil cohomology theory, as introduced by Cisinski and Déglise \cite[\S 2]{CD12},  specialized to our setting of $\Q$-linear stable $\infty$-categories.

We begin by recalling the following basic definition.
\begin{dfn}\label{def:realization} Let $k$ be a field (resp.~let $K$ be a complete non-archimedean local field).  A monoidal \emph{realization functor} or simply a realization is a morphism
    \[R\Gamma^*\colon \DA(k) \to \catD^\otimes \]
(resp., $R\Gamma^*\colon \RigDA(K) \to \catD^\otimes$) in $\Prlm$, where $\catD^\otimes$ is presentable monoidal stable \icat. We will mostly be interested in the case where $\catD^\otimes\in \Mod_{\Mod_\Q}(\Prl)$ is $\Q$-linear. A ``classical'' example is given by $\catD^\otimes = \Mod_\Lambda$, for
$\Lambda\in \CAlg(\Mod_\Q)^{\rm cn}$ (i.e., a commutative connective dg $\Q$-algebra).
\end{dfn}
\begin{rmk} To give a realization functor for $\DA(k)$ is thus equivalent to give a symmetric monoidal functor $\Sm/k \to  \catD^\otimes$ satisfying étale descent, $\A^1$-invariance and Tate stability. Similarly, to give a realization functor for $\RigDA(K)$ is equivalent to give a symmetric monoidal functor $\RigSm/K\to  \catD^\otimes$ satisfying rig-étale descent, $\B^1$-invariance and Tate stability.  
\end{rmk}

Realizations that are $\Mod_\Lambda$-valued   are classically introduced in terms of cohomology theories for algebraic (resp.~analytic) varieties in the following sense.
%The commutative algebra $E_\Gamma$ associated to a monoidal realization is in fact a \emph{Weil spectrum} associated to a mixed Weil cohomology theory, in the following sense.  
\begin{dfn}[{\cite[Definition 1.1 and 2.1]{ayoub-weil}}] \label{df:mixed_Weil}Let $k$ be a field (resp.~let $K$ be a complete non-archimedean local field), and let $\Lambda\in \CAlg(\Mod_\Q)^{\rm cn}$.

A \emph{mixed Weil cohomology theory} for algebraic $k$-varieties (resp.~for rigid analytic $K$-varieties) is a presheaf $\Gamma_W$ of commutative $\Lambda$-algebras on $\Sm/k$ (resp.~on $\RigSm/K$) satisfying the following properties: 
\begin{enumerate}
    \item For any $X\in \Sm/k$ (resp.~$X\in \RigSm/K$) the natural morphism $\Gamma_W(X) \to \Gamma_W(X\times \A^1)$ (resp.~$\Gamma_W(X)\to \Gamma_W(X\times \B^1)$) is an equivalence;
    \item The $\Gamma_W(\pt)$-module $\cof(\Gamma_W(\pt)\to \Gamma_W(\P^1))$ is invertible;
    \item For every $X, Y\in \Sm/k$ (resp.~for every $X,Y\in \RigSm/K^{\qcqs}$) there is an equivalence
    \[ \Gamma_W(X) \otimes_{\Gamma_W(\pt)} \Gamma_W(Y) \simeq \Gamma_W(X\times Y);\]
    \item $\Gamma_W(-)$ has étale descent. %\federico{basta étale descent o serve hyperdescent? Tanto le categorie coincidono. }
\end{enumerate}
Note that $\Gamma_W$ is automatically an object of $\DA^{\rm eff}(k;\Lambda)$ (resp.~of $\RigDAeff(K;\Lambda)$).
\end{dfn}
\begin{rmk} Any mixed Weil cohomology theory for algebraic or for analytic varieties gives rise naturally to a commutative algebra object $\mathbf{\Gamma}_W$ in $\DA(k;\Lambda)$ (resp.~in $\RigDA(K;\Lambda)$), satisfying some extra assumptions. See \cite[Proposition 1.9, 2.5]{ayoub-weil} and \cite[2.1.5]{CD12}. In fact, the categories of \emph{Weil spectra} and that of $\Lambda$-valued Weil cohomology theories \cite[Definition 1.2]{ayoub-weil} are equivalent. See \cite[Theorems 1.16, 2.12]{ayoub-weil}
\end{rmk}
\begin{rmk}
Given any monoidal realization functor $R\Gamma^*$ as in \Cref{def:realization}, write $R\Gamma_*$ for its right adjoint. We denote $R\Gamma_*(\one)$ by $E_\Gamma$. Since $R\Gamma^*$ is monoidal, $R\Gamma_*$ is lax monoidal. In particular, it carries commutative algebra objects to commutative algebra objects, so that $E_\Gamma \in \CAlg (\DA(k))$ (resp.~in $\CAlg(\RigDA(K))$).      

The assignment $R\Gamma^* \mapsto  
E_\Gamma$ determines in fact an equivalence of categories between the category of algebraic (resp. analytic) $\Mod_\Lambda$-valued realization functors and the category of mixed Weil spectra in $\DA(k)$ (resp.~in $\RigDA(K)$). See \cite[Theorems 1.21, 2.16]{ayoub-weil}. 
\end{rmk}

%If we model $\DA(k)$ (resp.~$\RigDA(K)$) using $T = \cof(\Q(\pt)\to \Q(\P^1))$-spectra, then $\mathbf{\Gamma}_W$ is given by a collection of objects in $\DA^{\rm eff}(k)$ (resp.~$\RigDAeff(K)$)
%\[ \mathbf{\Gamma}_W = (\Gamma_W, \Gamma_W(1)[2], \ldots, \Gamma_W(n)[2n], \ldots)\]
%where $\Gamma_W(1) = \Gamma_W \otimes \cof(\Gamma_W(\pt)\to \Gamma_W(\P^1))^{\otimes -1}$. 
%\begin{dfn}
%    A Weil spectrum $E= (E_0,E_1, \ldots)\in \CAlg(\DA(k;\Lambda))$ is a commutative algebra object in $\DA(k;\Lambda)$ such that $E_0=\Omega^{\infty}_T(E)$ is a mixed Weil cohomology theory in the sense of \Cref{df:mixed_Weil} and each $E_n$ is an invertible $E_0$-module.
%\end{dfn}
We list a series of examples of classical realization functors. In what follows, we keep the convention that $k$ is a field and $K$ is a complete non-archimedean local field. 
\begin{exm}[$\ell$-adic realizations] Let $\ell$ be a prime invertible in $k$ (resp.~in $K$). The classical $\ell$-adic \'etale cohomology is an example of mixed Weil cohomology theory: for every $X\in \Sm/k$ (resp.~$X\in \RigSm/K$), write $R_\ell(X) \in \mathcal{D}_{\proet}(k, \Q_\ell)$ for the object computing the $\ell$-adic \'etale homology of $X$, see e.g.~\cite[Théorème 6.9]{AyoubEt} in the algebraic context. The assignment $X\mapsto R_\ell(X)$ satisfies all the assumptions of \Cref{df:mixed_Weil} and gives rise to a monoidal functor $\DA(k) \to \mathcal{D}_{\proet}(k, \Q_\ell)$ (resp.~$\RigDA(K) \to \mathcal{D}_{\proet}(k, \Q_\ell)$). 

One can  describe more accurately the $\ell$-adic \'etale realization (both in the algebraic and in the analytic setting), and over a general base, by means of the so-called rigidity theorem: in this language, the étale realization arises as the left adjoint to the canonical functor induced by the inclusion of the small étale site into the big étale site. See \cite[\S 2.10]{agv} and \cite[Theorem 3.2]{bamb-vezz} in the analytic setting and the already mentioned \cite{AyoubEt} in the algebraic setting for the compatibility of the $\ell$-adic realization with the six-functor formalism. %More generally, the $\ell$-adic étale realization can be defined in the relative setting, and it is compatible with the 6-operations. 
\end{exm}
\begin{exm}[Betti and Hodge realization] \label{ex:Betti}Let $k=\C$. For a finite type $\C$-scheme $X$, let $X^{\rm an}$ be the set of $\C$-points $X(\C)$, equipped with the classical complex analytic topology. The functor $X\mapsto X^{\rm an}$ can be promoted to a symmetric monoidal functor $\mathsf{Bti} \colon \DA(\C) \to \DA^{\rm an}(\C) \simeq\Mod_\Q$, where $\DA^{\rm an}(\C)$ is the (stabilization of the) category of $\mathbb{D}^1_\C$-local sheaves on $\C$-smooth analytic spaces. By \cite[\S 2]{ayoub-h1} it is equivalent to the classical derived $\infty$-category of $\Q$-modules. For any smooth $\C$-scheme $X$, the homology groups $H_*(\mathsf{Bti}(\mathsf{M}(X)))$ compute the singular homology of $X(\C)$. 

The Betti realization can be further enriched to obtain a mixed Hodge realization. Let $\MHS^p_\Q$ be the Abelian category of polarisable mixed Hodge structures and let $\mathcal{D}^b(\MHS^p_\Q)$ be its bounded derived stable $\infty$-category.  The presheaf defined by sending $X\in \Sm/\C$ to the object $R\Gamma_{\rm Hodge}^*(X)\in \mathcal{D}^b(\MHS^p_\Q)$  computing the singular cohomology of $X$ equipped with its natural mixed Hodge structure is a mixed Weil cohomology theory. As such, it can be promoted to a symmetric monoidal (cohomological) realization functor
\[ \DA(\C) \to \mathcal{D}^b(\MHS^p_\Q)^{\rm op}.\]
Restricting to the subcategory of compact objects $\DA(\C)_\omega$ and pre-composing  with the duality functor $M\mapsto M^\vee$ gives the homological Hodge realization functor $\DA(\C)_\omega \to \mathcal{D}^b(\MHS^p_\Q)$. Since $\DA(\C) \simeq \Ind (\DA(\C)_\omega)$, this can be further extended to a symmetric monoidal functor $\DA(\C) \to \Ind \mathcal{D}^b(\MHS^p_\Q)$ taking values in the Ind-derived \icat. As for the $\ell$-adic realization, it turns out that also the Hodge realization can be defined in the relative setting, in a way that is compatible with the six-functor formalism. See \cite[Theorem 4.4]{Tubach}.
\end{exm}
\begin{exm}[de Rham realization] Let $k$ be a field of characteristic zero. 
    The presheaf $X\mapsto R\Gamma(X, \Omega_{X/k})$ is another example of mixed Weil cohomology, giving rise to a monoidal (cohomological) realization functor $\dR\colon\DA(k) \to \mathcal{D}(k)^{\rm op}$. We denote by $R\Gamma_{{\rm dR}, k}$ the covariant realization functor obtain by pre-composing the restriction of $\dR$ to the subcategory of compact objects with the duality functor, as in \Cref{ex:Betti}, further extended to a unique monoidal, compact-preserving, colimit-preserving functor $R\Gamma_{{\rm dR}, k}\colon \DA(k) \to \mathcal{D}(k)$. 
\end{exm}

\subsection{The six functor formalism}\label{s:6ff}
For any morphism $f\colon S\to T$ of schemes (resp. of rigid analytic spaces), we have natural adjunctions %\federico{mettiamo una restrizione sulla dimensione di Krull?}
\[\begin{tikzcd}
\DA(T) \arrow[r, shift left=.75ex,"f^*"] & \DA(S) \arrow[l,shift left=.75ex,"f_*" ]
\end{tikzcd} \quad (\text{resp. }   \begin{tikzcd}\RigDA(T) \arrow[r, shift left=.75ex,"f^*"] & \RigDA(S) \arrow[l,shift left=.75ex,"f_*" ]
\end{tikzcd})\]
In fact, the assignment $\Sch\ni S\mapsto \DA(S)$ (resp.~$\mathrm{RigSpc}\ni S\mapsto\RigDA(S)$)  naturally extends into a functor 
\[\DA(-) \colon \Sch^{\op } \longrightarrow \Prlm    \quad (\text{resp. } \RigDA(-) \colon \mathrm{RigSpc}^{\op} \longrightarrow \Prlm)   \]
More is true: the presheaves $\DA(-)$ and $\RigDA(-)$ underlie a six-functor formalism, in the following sense.  For any morphism $f\colon S\to T$ of schemes (resp.~of rigid analytic spaces) locally of finite type   there are adjunctions
\[\begin{tikzcd}
\DA(S) \arrow[r, shift left=.75ex,"f_!"] & \DA(T) \arrow[l,shift left=.75ex,"f^!" ]
\end{tikzcd} \quad (\text{resp. }   \begin{tikzcd}\RigDA(S) \arrow[r, shift left=.75ex,"f_!"] & \RigDA(T) \arrow[l,shift left=.75ex,"f^!" ]
\end{tikzcd})\]
satisfying the following list of properties (see \cite{ayoub-th1,ayoub-th2,cd,agv}:
\begin{itemize}
    \item[(i)] (Functoriality). If $f$ and $g$ are composable morphisms, then $(g\circ f)^* \simeq f^*\circ g^*$ and $(g\circ f)_* \simeq g_*\circ f_*$. If both $f$ and $g$ are locally of finite type, then $(g\circ f)_! \simeq g_!\circ f_!$ and  $(g\circ f)^! \simeq f^!\circ g^!$. 
    \item[(ii)] (Proper maps). There is a natural transformation $f_!\to f_*$. If $f$ is proper, then $f_*\simeq f_!$.
    \item[(iii)] (Smooth morphisms, Relative purity). If $f$ is smooth, the functor $f^*$ admits a left adjoint $f_\sharp$. 
If $f$ is smooth of relative dimension $d$, then $f^! \simeq f^*(-)(d)[2d]$ and $f_! \simeq f_\sharp(-)(-d)[-2d]$. In particular, if $f$ is étale, then $f^*\simeq f^!$. 
\item[(iv)] (Absolute purity). If $\iota\colon S\to T$ is a regular closed immersion of  regular schemes of pure codimension $d$, then $\iota^! \one_T \simeq \one_S (-d)[-2d]$.
    \item[(v)] \label{item:loc}(Localization). Let $\iota\colon Z \to X$ be a closed immersion of schemes (resp.~a closed immersion of rigid analytic spaces \cite[Definition 1.1.14]{agv}) and let $j\colon U=X-Z\to X$ be the open complement. Then there are fiber sequences of endofunctors
    \[ j_\sharp j^* \to \id_X \to \iota_*\iota^*, \quad \iota_* \iota^! \to \id_X \to j_*j^*\]
    of $\DA(X)$ (resp.~of $\RigDA(X)$). 
    \item[(vi)](Base-change theorems). Let 
    \[\begin{tikzcd}
        Y' \arrow[r, "g'"] \arrow[d, "f'"] & Y\arrow[d, "f"] \\
        X'\arrow[r, "g"] & X
    \end{tikzcd}\]
    be a cartesian morphism of schemes (resp.~of rigid analytic spaces). 
    \begin{enumerate}
        \item If $f$ is smooth, the natural transformations
        \[ (f')_\sharp \circ (g')^* \longrightarrow g^* \circ f_\sharp, \quad f^* \circ g_* \longrightarrow (g')_*\circ (f')^* \]
        are equivalences.
        \item If $f$ is proper  then the natural transformation
        \begin{equation}\label{eq:prop_base_change} g^* \circ f_* \longrightarrow (f')_* \circ (g')^*\end{equation}
is an equivalence. If $g$ is moreover smooth, then also the natural transformation
\[ g_\sharp\circ (f')_* \longrightarrow f_* \circ (g')_\sharp\]
is an equivalence. The equivalence \eqref{eq:prop_base_change} holds also when $f$ is a quasi-compact,  and quasi-separated morphism of rigid analyitic spaces. % \federico{Se interpreto correttamente tutte le ipotesi di AGV 2.7.1 sono automaticamente soddisfatte se lavoriamo con coeff. razionali e con cose  locally of finite Krull dimension. Decidere cosa scrivere qui. Ci serve?}
\item If $f$ is locally of finite type, then the natural transformation
\[ g^* \circ f_! \longrightarrow  (f')_! \circ (g')^* \]
is an equivalence
    \end{enumerate}
    \item[(vii)] (Projection formulas). If $f\colon Y\to X$ is locally of finite type, there is an equivalence
    \[ M\otimes f_! N \simeq f_!(f^* M\otimes N) \]
    for every $M\in \DA(X)$ (resp.~in $\RigDA(X)$) and $N\in \DA(Y)$ (resp.~in $\RigDA(Y)$). In particular, when $f$ is proper, there is an equivalence $f_*(f^*M\otimes N) \simeq M \otimes f_*N$.
    \item[(viii)](Duality).  If $f\colon Y\to X$ is a smooth and proper morphism of schemes or of rigid analytic spaces. Then $f_\sharp \one_Y$ is strongly dualizable in $\DA(X)$ (resp.~in $\RigDA(X)$), with dual given by $f_*\one_Y$. More generally, if $M\in\DA(Y)$ (resp. in $\RigDA(Y)$) is dualizable, then $f_\sharp M$ also is, with dual $f_* M^\vee$.
\end{itemize}
    If $p\colon X\to S$ is a morphism locally of finite type of schemes (resp.~of rigid analytic spaces), we call the object $p_*p^*\one_S=p_*\one_X$ the \emph{cohomological motive of $X$} (relative to $S$), denoted $\mathsf{M}_S^{\rm coh}(X)$, and we call the object $p_!p^! \one_S$ the \emph{homological motive of $X$} (relative to $S$), denoted $\mathsf{M}_S(X)$. For a smooth morphism $X\to S$, this agrees with the image of $X$ under the Yoneda functor, as mentioned above. When $S$ is the spectrum (resp.~the adic spectrum) of field we will omit the subscript $S$ and simply write the homological [resp. cohomological] motive of $X$ as $\mathsf{M}(X)$ [resp.  $\mathsf{M}^{\rm coh}(X)$]. 

Similarly, we sometimes  denote by $\mathsf{M}_S(X)^{\rm c} = p_*p^!\one_S$  the \emph{motive with compact support of $X$}. It agrees with the homological motive of $X$ when $p$ is proper. 
% \begin{rmk}[Realizations]  \federico{Possiamo realizzare cose singolari, a supp compatto}
% \end{rmk}
\begin{rmk}[Localization sequences for motives over a field] Let $k$ be a field (resp.~let $K$ be a complete non-archimedean local field). Let $X\in \Sch/k$ (resp.~$X\in \mathrm{RigSpc}^{\rm qcqs}/K$) and let $\iota\colon Z\to X$ be a closed immersion of schemes (resp.~a closed immersion of rigid analytic spaces). The localization sequences of \ref{item:loc}(v) takes the  (possibly more familiar) form of a fiber sequence in $\DA(k)$ (resp.~in $\RigDA(K)$)
\[
\begin{tikzcd}
\mathsf{M}(Z)^{\rm c}\arrow[r, "\iota_*"] & \mathsf{M}(X)^{\rm c} \arrow[r, "j^*"] & \mathsf{M}(U)^{\rm c}
\end{tikzcd}
\]
If $X$ and $Z$ are both smooth, then combining  purity and duality we have the fiber sequence
\[
\begin{tikzcd}
\mathsf{M}(U)  \arrow[r, "j_*"]  & \mathsf{M}(X) \arrow[r, "\iota^!"] & \mathsf{M}(Z)(c)[2c]      
\end{tikzcd}
\]
if $c$ is the codimension of $Z$ in $X$. 
\end{rmk}
%Da AGV Scholium pagina2/3;  Dualizzabilità; motivi di cose singolari, a supporto compatto. 

\subsection{Frobenius data}\label{sec:Frob}
We start by recording the following fact about algebraic (resp.~analytic) motives. Recall that a morphism $e\colon S'\to S$ of schemes is a universal homeomorphism if for any $Y\to S$, the base change $Y\times_S S'\to Y$ is a homeomorphism. It is well-known that this is equivalent to require that  $e$ is integral, radicial and surjective. For example, the canonical closed immersion $X_{\rm red} \to X$ is a universal homeomorphism, as it is any closed embedding $Z\to X$ defined by a nilpotent sheaf of ideals. In positive characteristic, both the absolute and the relative Frobenius are examples of universal homeomorphisms.

This notion has an exact analogue in the analytic world: we say that  a morphism of rigid analytic spaces $S'\to S$ is a universal homomorphism if it is quasi-compact, quasi-separated, surjective and radicial. See \cite[2.9.1]{agv}.

Universal homeomorphisms of schemes induce equivalences of étale sites  \cite[Exposé VIII, Théorème 1.1]{SGAIV2}. This fact has the following motivic generalization, sometimes known as the ``semi-separatedness property'' of the stable motivic categories.   
%Prendere da BGV 4.28, magari spostare sopra. 
\begin{thm}[{\cite[Theorem 1.2]{AyoubEt}}, resp. {\cite[Corollary 2.9.11]{agv}}]\label{thm:semi-separat}
    Let $e : X'\to X$ be a universal homeomorphism of schemes (resp. of rigid analytic spaces with $X$ locally of finite Krull dimension). Then, the base change functor
$$e^*\colon\DA(X)\to\DA(X')\qquad \left(\text{resp. }e^* :\RigDA(X)\to\RigDA(X')\right)$$
is an equivalence. In particular, $\DA(X)\simeq \DA(X^{\Perf})$.
\end{thm}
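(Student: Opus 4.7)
The strategy is to verify, separately, full faithfulness and essential surjectivity of $e^*$, exploiting that both $\DA(X)$ and $\DA(X')$ are compactly generated by motives $\mathsf{M}(Y)=p_\sharp\one_Y$ of smooth morphisms $p\colon Y\to X$ (and the analogues on $X'$), and similarly in the rigid analytic case for smooth $p'\colon Y'\to X'$.

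For essential surjectivity, the key input is a local lifting statement: if $p'\colon Y' \to X'$ is smooth, then \'etale-locally on $Y'$ we may factor $p'$ as $Y' \to \A^n_{X'} \to X'$ (resp.\ $Y' \to \B^n_{X'} \to X'$ in the analytic case) with the first map \'etale (resp.\ rig-\'etale). Since $e$ is a universal homeomorphism, so is the base change $\A^n_{X'}\to \A^n_X$ (resp.\ $\B^n_{X'}\to \B^n_X$), and topological invariance of the \'etale site (SGA IV, Expos\'e VIII, Thm.~1.1, resp.\ its analytic counterpart from \cite{agv}) provides an equivalence of the corresponding small \'etale sites. Thus, \'etale-locally, the morphism $Y'\to \A^n_{X'}$ descends uniquely to some \'etale $V\to \A^n_X$, and such $V$ is automatically smooth over $X$ with $V\times_X X' \simeq Y'$ locally. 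By \'etale hyperdescent of $\DA$ (resp.\ $\RigDA$), the generator $\mathsf{M}(Y')$ thus lies in the localizing subcategory generated by $e^*(\DA(X))$, yielding essential surjectivity.

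For full faithfulness, using smooth base change in the six-functor formalism (namely $e^* p_\sharp \simeq p'_\sharp (e')^*$ along the cartesian square defined by $e$ and $p$), one reduces the computation of internal mapping spaces to
\[\Map_{\DA(X)}(\mathsf{M}(Y_1), \mathsf{M}(Y_2)) \simeq \Map_{\DA(Y_1)}(\one_{Y_1}, \mathsf{M}_{Y_1}(Y_1\times_X Y_2)),\]
and the analogous identity on $X'$. It then suffices to prove that pulling back along the universal homeomorphism $e'\colon Y_1\times_X X' \to Y_1$ induces an equivalence on the right-hand sides. Via \'etale hyperdescent, this in turn amounts to comparing \'etale cohomology with $\Q$-coefficients of $Z = Y_1\times_X Y_2$ and of its base change along $e'$, which is again topological invariance of the \'etale site. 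Finally, the equivalence $\DA(X) \simeq \DA(X^{\Perf})$ follows by a direct application of the result to the universal homeomorphism $X^{\Perf} \to X$.

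The main obstacle is the local lifting step: in the algebraic setting it rests squarely on topological invariance of the small \'etale site combined with the standard \'etale-local structure theorem for smooth morphisms, whereas in the rigid analytic setting it requires in addition the correct analytic analogue of topological invariance for universal homeomorphisms of rigid analytic spaces, a non-trivial input established in \cite[\S 2.9]{agv}.
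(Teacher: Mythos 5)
Your generation step is essentially fine: smooth $X'$-schemes do descend étale-locally along a universal homeomorphism (local structure of smooth morphisms plus topological invariance of the small étale site), so the essential image of $e^*$ generates $\DA(X')$ under colimits. But note that this step never uses the coefficients, and that is the symptom of the real gap, which sits in your full faithfulness argument. After the correct reduction via $p_\sharp\dashv p^*$ and smooth base change, you must compare $\Map_{\DA(Y_1)}(\one,\mathsf{M}_{Y_1}(Z))$ with its analogue over $Y_1\times_X X'$. These mapping spectra are \emph{not} étale cohomology of $Z$ with $\Q$-coefficients: they are computed in the $\A^1$-localized, Tate-stabilized category of étale hypersheaves on the \emph{big} smooth site, and rationally they contain genuinely motivic invariants (e.g.\ higher Chow groups). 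Topological invariance of the small étale site says nothing about such invariants; their invariance under universal homeomorphisms is precisely the content of the theorem, not an available input. Indeed, your proposed proof nowhere uses that the coefficients are rational (or that $p$ is inverted), whereas the statement is false integrally, as the remark following the theorem in the paper recalls (consider $K_1$ of an imperfect field); so some step must break, and it is this one.

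The paper does not prove the theorem but quotes it from \cite{AyoubEt} and \cite{agv}, and those proofs are structured differently at exactly this point: one reduces, via localization for nilimmersions and noetherian approximation/continuity, to finite surjective radicial morphisms of degree a power of $p$ (e.g.\ relative Frobenii), and then uses genuinely $\Q$-linear or $p$-inverted input — for instance finite transfers/correspondences, where $f\circ f^{T}=\deg(f)\cdot\id$ with $\deg(f)=p^{n}$ invertible, or the invertibility of Frobenius on the relevant spectra — to get full faithfulness. The rigid analytic case of \cite{agv} is not obtained from an ``analytic topological invariance'' statement either: the proof of Corollary 2.9.11 there goes through a further nontrivial dévissage (approximation and reduction towards the algebraic/formal situation), which is why it carries the hypothesis on the Krull dimension. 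So your outline would need to be repaired by replacing the ``étale cohomology plus topological invariance'' step with an argument that actually exploits rational coefficients.
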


We obtain immediately an equivalence between the associated cohomological motives of $X,X'$ as follows.
\begin{cor}\label{cor:semisep}
    Let $X\to X'$ be a universal homeomorphism of schemes over $S$. Then the map $\mathsf{M}_S^{coh}(X')\to \mathsf{M}_S^{coh}(X)$ is invertible.
\end{cor}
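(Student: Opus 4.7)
The proof is essentially a direct consequence of \Cref{thm:semi-separat}, and the strategy is to unwind the definition of the cohomological motive and reduce to the fact that an equivalence of \icats has invertible unit.

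Concretely, let $e\colon X\to X'$ denote the given universal homeomorphism, and let $p'\colon X'\to S$ be the structure morphism, so that $p\colon X\to S$ factors as $p=p'\circ e$. By definition,
\[
\mathsf{M}_S^{\mathrm{coh}}(X') = p'_*\one_{X'}, \qquad \mathsf{M}_S^{\mathrm{coh}}(X)=p_*\one_X = p'_* e_* e^*\one_{X'},
\]
where the last equality uses functoriality of $(-)_*$ (property (i) of the six-functor formalism) together with the identification $e^*\one_{X'}\simeq\one_X$. Under these identifications, the canonical comparison map $\mathsf{M}_S^{\mathrm{coh}}(X')\to \mathsf{M}_S^{\mathrm{coh}}(X)$ is obtained by applying $p'_*$ to the unit morphism $\eta\colon \one_{X'}\to e_* e^*\one_{X'}$ of the adjunction $(e^*,e_*)$.

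By \Cref{thm:semi-separat}, the functor $e^*\colon \DA(X')\to \DA(X)$ is an equivalence of \icats. In particular, its right adjoint $e_*$ is a quasi-inverse, so the unit $\eta$ is an equivalence on all objects (and in particular on $\one_{X'}$). Applying the functor $p'_*$ then yields the claimed equivalence $\mathsf{M}_S^{\mathrm{coh}}(X')\isoto \mathsf{M}_S^{\mathrm{coh}}(X)$.

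There is no real obstacle: the content sits entirely in \Cref{thm:semi-separat}, and everything else is formal. One small point worth flagging is to justify that the comparison map in the statement is indeed induced by the unit $\eta$ (rather than some other map); this is just the standard definition of the functoriality of $\mathsf{M}_S^{\mathrm{coh}}$ in the scheme variable, coming from pulling back along $e$ and adjunction.
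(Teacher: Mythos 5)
Your argument is correct and is exactly the intended one: the paper states this corollary as an immediate consequence of \Cref{thm:semi-separat}, and your unwinding — identifying the comparison map as $p'_*$ applied to the unit $\one_{X'}\to e_*e^*\one_{X'}$, which is invertible since $e^*$ is an equivalence — is the standard argument the paper leaves implicit.
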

\begin{rmk}\Cref{thm:semi-separat} is stated for the $\Q$-linear categories $\DA(S)$ and $\RigDA(S)$, but holds more generally for the stable motivic homotopy categories $\SH(S)$ and $\mathrm{RigSH}(S)$ for
 $S$   any scheme (or rigid analytic space locally of finite Krull dimension) of characteristic $p>0$, after inverting the morphisms $\cdot p\colon E\to E$. In particular, for any \emph{oriented} motivic spectrum $\mathbf{E}\in \CAlg(\SH(S))$   representing a generalized cohomology theory on $S$-schemes, the semi-separatedness of $\SH(S)[\frac{1}{p}]$ implies that for any universal homeomorphism $f\colon Y\to X$ of $S$-schemes, the induced maps $f^*\colon \mathbf{E}^{*,*}(X)[\frac{1}{p}]\to \mathbf{E}^{*,*}(Y)[\frac{1}{p}]$ and $f_*\colon \mathbf{E}^{\rm BM}_{*,*}(Y)[\frac{1}{p}]\to \mathbf{E}^{\rm BM}_{*,*}(X)[\frac{1}{p}]$ are isomorphisms. When $\mathbf{E}=\mathrm{KGL}$ is the spectrum representing  algebraic $K$-theory, we obtain in particular that the canonical map $K(S)[\frac{1}{p}]\to K(S^{\rm perf})[\frac{1}{p}]$ is an equivalence. See \cite{kelly_morrow} for more general results in this direction. Note that this statement would be false before inverting $p$, as one can check by considering $K_1(k)$ for an imperfect field $k$. 
\end{rmk}
We can specialize the above Theorem to construct a canonical ``Frobenius enrichment'' for motives over schemes or analytic varieties defined over a field of positive characteristic $p$. We first observe that for $S$ any scheme (resp.~a rigid analytic space locally of finite Krull dimension)  of characteristic $p$ and $X$ smooth over $S$, the relative Frobenius defines a natural transformation $X\to X\times_\varphi S = \varphi_S^* X$ of $S$-schemes (resp.~of analyitic spaces over $S$). In particular, there is a natural transformation $\varphi^*\colon \id \to \varphi_S^*$ of endofunctors of $\DA(S)$ (resp.~of $\RigDA(S)$), which is in fact an equivalence in light of \Cref{thm:semi-separat}. If we denote by $\DA(S)^{\varphi^*}$ (resp.~by $\RigDA(S)^{\varphi^*}$) the category of \emph{homotopy fixed points of $\varphi^*$}, i.e., the equalizer of $\varphi^*$ and the identity functor $\id$, computed in $\Prloo$, we obtain the following Corollary.

\begin{cor}[{\cite[Corollary 2.26]{lbv}}]
    Let $S$ be a scheme (resp. a rigid analytic space locally of finite Krull dimension) of characteristic $p$ and $\varphi$ be the  absolute Frobenius map $\varphi\colon S\to S$. There is   a canonical functor
    $$
    \DA(S)\to \DA(S)^{\varphi^*} \qquad \left(\text{resp. }\RigDA(S)\to\RigDA(S)^{\varphi^*}\right)
    $$
    which informally sends $\mathsf{M}_S(X)$ to $(\mathsf{M}_S(X),\varphi_S\colon \mathsf{M}_S(X)\stackrel{\sim}{\to} \mathsf{M}_S(X\times_{\varphi}S))$ where $\varphi_S$ is the relative Frobenius map.
\end{cor}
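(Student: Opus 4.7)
The plan is to unwind the universal property of the equalizer defining $\DA(S)^{\varphi^*}$ and feed into it the natural equivalence $\eta\colon \id \Rightarrow \varphi^*_S$ already constructed in the paragraph preceding the corollary. Recall that $\eta$ arises from the relative Frobenius $F_{X/S}\colon X \to \varphi^*_S X = X\times_\varphi S$ on $\Sm/S$ (resp.\ on $\RigSm/S$), promoted to a natural transformation of endofunctors of $\DA(S)$ (resp.\ of $\RigDA(S)$) by left Kan extension along the Yoneda embedding, and that it is automatically a natural \emph{equivalence} in view of \Cref{thm:semi-separat}.

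By construction $\DA(S)^{\varphi^*}$ is the equalizer in $\Prloo$ of the pair $\id, \varphi^*_S\colon \DA(S) \rightrightarrows \DA(S)$, whose objects are pairs $(M,\alpha)$ with $M\in \DA(S)$ and $\alpha\colon M \xrightarrow{\sim} \varphi^*_S M$. By the universal property of this equalizer, the datum of a morphism $\DA(S) \to \DA(S)^{\varphi^*}$ in $\Prloo$ is equivalent to the datum of a pair $(H,\tilde\eta)$ consisting of a morphism $H\colon \DA(S) \to \DA(S)$ in $\Prloo$ together with a symmetric monoidal natural equivalence $\tilde\eta\colon H \xRightarrow{\sim} \varphi^*_S \circ H$. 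Taking $H = \id_{\DA(S)}$ and $\tilde\eta = \eta$ yields the desired canonical functor, which on representables sends $\mathsf{M}_S(X)$ to the pair $(\mathsf{M}_S(X), \mathsf{M}_S(F_{X/S}))$, matching the informal description in the statement. The rigid analytic case is handled identically using the second half of \Cref{thm:semi-separat}.

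The only non-formal check is that $(\id,\eta)$ really is a morphism in $\Prloo$, i.e.\ that $\eta$ is compatible with the symmetric monoidal structure; colimit preservation and compact-object preservation are automatic because the underlying functor is the identity. Monoidality of $\eta$ reduces to the geometric identity $F_{X\times_S Y/S} = F_{X/S}\times_S F_{Y/S}$ under the canonical identification $\varphi^*_S(X\times_S Y)\simeq \varphi^*_S X \times_S \varphi^*_S Y$, which holds verbatim in both the algebraic and the rigid analytic setting; the full monoidal coherence over $\DA(S)$ (resp.\ $\RigDA(S)$) then follows from its universal property as a symmetric monoidal presentable localization of presheaves on $\Sm/S$ (resp.\ $\RigSm/S$). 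The main obstacle lies in keeping track of $\infty$-categorical coherences for the monoidal natural equivalence $\eta$, but no genuinely new geometric input is required beyond \Cref{thm:semi-separat}.
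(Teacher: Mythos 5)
Your proposal is correct and takes essentially the same route as the paper: one promotes the relative Frobenius to a natural transformation $\eta\colon \id\Rightarrow\varphi_S^*$ of endofunctors of $\DA(S)$ (resp.\ $\RigDA(S)$), observes it is an equivalence by \Cref{thm:semi-separat}, and then feeds the pair $(\id,\eta)$ into the universal property of the equalizer computed in $\Prloo$. The monoidal-coherence point you flag is exactly what the cited reference takes care of, and your reduction of it to the compatibility of relative Frobenius with fiber products matches that treatment.
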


\subsection{The spreading out}
The functor $S\mapsto \RigDA(S)$ has the property of converting  limits of certain
cofiltered inverse systems of rigid analytic spaces into filtered colimits of presentable $\infty$-categories. 
\begin{thm}[{\cite[Theorem 2.8.15]{agv}}]\label{spreading}
Let $(X_{i})_{i\in I}$ be a cofiltered inverse system of 
rigid analytic spaces, with quasi-compact and quasi-separated 
transition maps, and admitting a weak limit $X_\infty$ in the sense of Huber (see \cite[Definition 2.4.2]{huber}). We further assume that $X_\infty$ and the $X_i$'s are locally of finite Krull dimension. 

Then, the obvious functor 
\begin{equation}
\label{eq-thm:anstC-v2-1}
\underset{i}{\varinjlim}\,\RigDA^{}_{}
(X_{i}) \to \RigDA^{}_{}(X_\infty),
\end{equation}
where the colimit is taken in $\Prl$, is an equivalence.
\end{thm}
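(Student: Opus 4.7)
The plan is to reduce the statement, via the construction of $\RigDA$ as a bi-localization, to a continuity statement at the level of the smooth rigid analytic site, and then invoke the rigid analogue of Grothendieck's spreading-out theorems. Recall that $\RigDA(S)$ is obtained from $\mathrm{Shv}_{\et}^{\wedge}(\RigSm/S;\mcD(\Q))$ by $\B^1$-localization and by formally inverting the Tate twist $\one(1)$. Each of these three operations (\'etale hypersheafification, $\B^1$-localization, $\otimes$-inversion) is a left Bousfield localization in $\Prlm$ whose generating (co)fibrations are determined by tensor-functorial data pulled back from the site; consequently, they commute with filtered colimits in $\Prlm$, so it suffices to establish the corresponding equivalence
$$
\varinjlim_{i}\,\Psh(\RigSm/X_i;\mcD(\Q))\;\isoto\;\Psh(\RigSm/X_\infty;\mcD(\Q))
$$
compatibly with the three classes of local equivalences.

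The geometric heart of the matter is then to prove that the cofiltered \icat of sites $\RigSm/X_\infty$ is the (bi-)colimit of the sites $\RigSm/X_i$. Concretely, I would prove two spreading-out statements: (a) every quasi-compact and quasi-separated smooth morphism $Y_\infty\to X_\infty$ arises as the pullback of some smooth morphism $Y_i\to X_i$ for $i$ sufficiently small, and the spread-out is unique up to further refinement; (b) every morphism of such spread-outs is, cofinally, the pullback of a morphism at finite level. Huber's continuity theorem for the small \'etale site \cite[Lemma 2.4.6, Proposition 2.4.4]{huber} provides the analogue for \'etale morphisms; one upgrades it to smooth morphisms by factoring smooth morphisms \'etale-locally through projections from affine space, exploiting that the residue field extensions at rank-one specializations approximate those at $X_\infty$. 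The locally-finite-Krull-dimension hypothesis enters here to guarantee noetherianness of the \'etale topos (via Huber's results on the dimension of analytic adic spaces) so that the descent data are actually finitely presented.

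To transfer this continuity from presheaves to $\RigDA$, I would use that filtered colimits in $\Prlm$ of localizations coincide with the localization of the filtered colimit when the localizing classes themselves assemble into a filtered system of maps; both $\B^1$-invariance and Tate-stability clearly satisfy this, while \'etale hyperdescent requires knowing that every \'etale hypercover of $X_\infty$ descends to an \'etale hypercover at some finite level. On quasi-compact, quasi-separated, finite-Krull-dimensional adic spaces, Scheiderer-type bounds on the \'etale cohomological dimension ensure that hyperdescent is equivalent to descent plus Postnikov convergence, which interacts well with filtered colimits.

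The main obstacle, as often in this kind of spreading-out argument, is precisely this interaction of the filtered colimit with \'etale \emph{hyper}-sheafification: one cannot naively commute hypercompletion with colimits. The remedy is the finite-Krull-dimensional hypothesis on $X_\infty$ and the $X_i$'s, which bounds the \'etale cohomological dimension uniformly in $i$ and reduces hyperdescent to truncated \'etale descent on a cofinal class of qcqs hypercovers; with this in hand the compatibility is formal. Granting this, fully faithfulness follows from the fact that $\RigDA(X_\infty)$ is compactly generated by motives of smooth qcqs rigid $X_\infty$-varieties (which spread out by (a)), and essential surjectivity is then immediate from (b) and the continuity for morphism spaces in the site.
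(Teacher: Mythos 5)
The central reduction in your plan fails. You propose to show that the big smooth site over $X_\infty$ is the filtered (bi-)colimit of the sites $\RigSm/X_i$ — your steps (a) and, crucially, (b): that morphisms between spread-out objects descend, cofinally, to finite level — and then to transfer this equivalence of presheaf categories formally through the three localizations. But $X_\infty$ is only a \emph{weak} limit: $\varinjlim_i\mcO(X_i)$ sits inside $\mcO(X_\infty)$ only as a dense subring, so morphisms over $X_\infty$ (already sections of $\B^1_{X_\infty}\to X_\infty$, i.e.\ morphisms in the big smooth site) do not descend to any finite level. The paper records the basic counterexample in \Cref{rmk:spreading_ex}: with $X_i=\Spa L$ for $L$ running over the finite extensions of $\Q_p$ inside $\C_p$, a map $\Spa\C_p\to\B^1_{\C_p}$ hitting an element transcendental over $\Q_p$ has no model over any $L$. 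Huber's continuity for the \emph{small} étale site does not upgrade to the big smooth site precisely because of such non-étale morphisms. Hence your step (b) is false, the functor $\varinjlim_i\Psh(\RigSm/X_i;\mcD(\Q))\to\Psh(\RigSm/X_\infty;\mcD(\Q))$ is not an equivalence, and continuity genuinely fails before $\B^1$-localization — which is why \eqref{eq-thm:anstC-v2-1} is a statement about motives rather than about (hyper)sheaves on the big site.

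Note that the paper itself offers no proof beyond the citation of \cite[Theorem 2.8.15]{agv}; there, the $\B^1$-localization is an essential \emph{input} rather than something one commutes past at the end. Morphisms (and gluing/descent data) over $X_\infty$ are only approximated by data defined at finite level, and the key technical point is that sufficiently close approximations are $\B^1$-homotopic, hence agree after passing to motives (``approximation up to homotopy''; compare the Elkik--Arabia results invoked in the introduction, and the analogous mechanism in the motivic tilting equivalence of \cite{vezz-fw}). Your use of the locally-finite-Krull-dimension hypothesis — bounded rational étale cohomological dimension, compact generation, and control of hypercompletion along the filtered colimit — does correspond to the role this hypothesis plays in \cite{agv}, and spreading out of qcqs smooth \emph{objects} (your (a)) is indeed an ingredient there; but without the approximation-up-to-homotopy argument for morphisms, neither full faithfulness (computing mapping spectra over $X_\infty$ as colimits of mapping spectra over the $X_i$) nor essential surjectivity can be obtained along the route you describe.
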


\begin{rmk}\label{rmk:spreading_ex}
    We like to call this a ``spreading out'' theorem as it ultimately says that any compact motive over $X_\infty$ has a model over some $X_i$. For example, by putting $X_i$ to be $\Spa L$ with $L$ ranging over finite extensions of $\Q_p$ in $\C_p$, we deduce 
    $$
    \RigDA(\C_p)\simeq \varinjlim_{[L:K]<\infty}\RigDA(L).
    $$
    We remark that the analogous statement would not hold if we simply considered the big \'etale sites over $\C_p$ and over the $L$'s (for example, a map $\Spa\C_p\to\B^1_{\C_p}$ hitting an element of $\C_p$ which is trascendental over $\Q_p$ doesn't admit any model over any $L$).
    
    Also, by putting  $X_i$ to be the affinoid open neighborhoods of a $K$-rational point $\Spa K=x\in S(K)$ in some rigid variety  $S$ over $K$, we deduce
    $$
    \RigDA(x)\simeq \varinjlim_{x\in U\subset S}\RigDA(U).
    $$
    In this case, the explicit quasi-inverse of the canonical pull-back functor $x^*$ is given by $\Pi^*$ i.e. the pull-back along the structural morphism $\Pi\colon U\to \Spa K$.  Informally, this says that any compact motive $M$ over $S$ becomes constant around each rational point, i.e. for any sufficiently small open neighborhood $j\colon U{\hookrightarrow}S$ of $x$, one has $j^*M\simeq \Pi^*x^*M$.  
    
    We note that the ``spreading out'' formula in this last case (a rational point $x$ as the limit of its open neighborhoods)  follows immediately from the localization triangle (Section \ref{item:loc}(v)) and plays an important role in proving finiteness results of de Rham cohomologies (see \cite[Theorem 4.46]{lbv}, and even \cite{Vezz-dR} for an algebraic analogue).
\end{rmk}
%Ricopiare statement del motivic spreading out 2.8.15.

The following is basically a restatement of the spreading out theorem, in terms on cohomological motives.
\begin{prop}\label{prop:spreadingonmotives}
 Let $(\pi_i\colon X_i\to S)_{i\in I}$ be a cofiltered inverse system of rigid analytic spaces locally of finite Krull dimension which are qcqs over $S$ %, with quasi-compact and quasi-separated transition maps AUTOMATICO DA 2-su-3 DEI QCQS
  admitting a weak limit $\pi_\infty\colon X_\infty\to S$ over $S$ which is locally of finite Krull dimension. Let $f_{ij}\colon X_i\to X_j$ be the natural transition maps, with $i,j\in I\cup\{\infty\}$. Then for any $M\in\RigDA(X_0)$,  the canonical map in $\RigDA(S)$
 $$
 \varinjlim \pi_{i_*}f_{i0}^*M \to \pi_{\infty*}f_{\infty0}^*M
 $$
 is invertible in $\RigDA(S)$.
 \end{prop}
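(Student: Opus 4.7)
The plan is to reduce to the case where $M$ is compact and then deduce the desired equivalence from the spreading out theorem \Cref{spreading} through adjunction.

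For the reduction step, I would first observe that both sides of the proposed equivalence commute with filtered colimits in $M$: the pullbacks $f_{i0}^*$ and $f_{\infty 0}^*$ preserve all colimits as left adjoints, while the pushforwards $\pi_{i*}$ and $\pi_{\infty *}$ preserve filtered colimits because the corresponding structure morphisms are qcqs (the $\pi_i$ by hypothesis, and $\pi_\infty$ as a weak cofiltered limit over $S$ of qcqs morphisms along qcqs transition maps). This continuity of $\pi_*$ along qcqs maps is part of the six-functor formalism for $\RigDA$ established in \cite{agv}. Writing $M$ as a filtered colimit of compact objects of $\RigDA(X_0)$, we reduce to the case where $M$ itself is compact.

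Once $M$ is compact, since $\RigDA(S)$ is compactly generated it suffices to show that the comparison map induces an equivalence after applying $\Map_{\RigDA(S)}(N,-)$ for every compact $N\in\RigDA(S)$. Compactness of $N$ together with the adjunctions $(\pi_i^*,\pi_{i*})$ and $(\pi_\infty^*,\pi_{\infty*})$ rewrite the two sides as
$$\varinjlim_{i}\Map_{\RigDA(X_i)}\bigl(\pi_i^*N,f_{i0}^*M\bigr)\qquad\text{and}\qquad \Map_{\RigDA(X_\infty)}\bigl(\pi_\infty^*N,f_{\infty 0}^*M\bigr).$$
These mapping spaces are then identified by \Cref{spreading}: the equivalence $\varinjlim_i \RigDA(X_i)\simeq \RigDA(X_\infty)$ in $\Prl$ has pullbacks as transition functors, which preserve compact objects (by the same continuity used above), so in the colimit the mapping space between compact objects of the form $f_{\infty i}^*A_i$ and $f_{\infty i}^*B_i$ is computed as $\varinjlim_{j\geq i}\Map_{\RigDA(X_j)}(f_{ji}^*A_i,f_{ji}^*B_i)$. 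Applied to the compact pairs $\pi_\infty^*N=f_{\infty i}^*\pi_i^*N$ and $f_{\infty 0}^*M=f_{\infty i}^*f_{i0}^*M$, this yields exactly the desired identification.

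The hard part is the continuity of the pushforwards invoked in the reduction step, which rests on the qcqs hypotheses and on the fine structure of the six-functor formalism for rigid analytic motives from \cite{agv}; beyond this, nothing new is required and the proof is a direct unwinding of \Cref{spreading} along adjunctions.
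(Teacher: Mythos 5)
Your proof is correct and follows essentially the same route as the paper: test against compact objects of the compactly generated category $\RigDA(S)$, move through the adjunctions $(\pi^*,\pi_*)$, and identify the resulting mapping spaces via the colimit description of \Cref{spreading}, using that $\pi_i^*$ and $f_{ij}^*$ preserve compacts. The only difference is your preliminary reduction to compact $M$, which is superfluous: the mapping-space formula coming from \Cref{spreading} needs only the \emph{source} object to be compact (write the target as a filtered colimit of compacts), so the paper applies it directly with $A=\pi_0^*C$ compact and $B=M$ arbitrary, thereby avoiding any appeal to the continuity of $\pi_{\infty*}$ and to the (true, but not entirely immediate) fact that $\pi_\infty$ is qcqs.
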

 \begin{proof}
     We let $C$ be a compact object in $\RigDA(S)$. By \cite[Proposition 2.4.22]{agv}, the functors $f_{ij}^*$ and $\pi_i^*$ are compact-preserving.  By \Cref{spreading}, we know that $\RigDA(X_\infty)$ is equivalent to $\varinjlim\RigDA(X_i)$ in $\Prloo$. As such, for any two objects $A,B$ in $\RigDA(X_0)$ with $A$ compact, the mapping complex $\map(f_{\infty0}^*A,f_{\infty0}^*B)$ can be computed as $\varinjlim \map(f_{i0}^*A,f_{i0}^*B)$. We then deduce the following sequence of canonical equivalences:
     \begin{align*}
     \map(C,\varinjlim \pi_{i_*}f_{i0}^*M )&\simeq \varinjlim\map (f_{i0}^*\pi_{0}^*C,f_{i0}^*M)\\&\simeq \map(\pi_{\infty}^*C,f_{\infty0}^*M)\\&\simeq \map(C,\pi_{\infty*}f_{\infty0}^*M).
     \end{align*}
     As $\RigDA(S)$ is compactly generated (see \cite[Proposition 2.4.22]{agv}), this is enough to prove our claim.
 \end{proof}

 By putting $M=\one$ and $S=\Spa K$ we deduce:

 \begin{cor}\label{cor:Xinfty=colim}
     Let $X_\infty\sim \varprojlim X_i$ be a qcqs weak limit of qcqs adic spaces over $K$,  with finite Krull dimension. 
     %\begin{enumerate}
     The canonical map $\varinjlim \mathsf{M}^{\rm coh}(X_i)\to \mathsf{M}^{\rm coh}(X_\infty)$ is an equivalence in $\RigDA(K)$.  
         %
    % \end{enumerate}
 \end{cor}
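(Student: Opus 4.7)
The plan is to derive this corollary as a direct specialization of \Cref{prop:spreadingonmotives}, applied with $S=\Spa K$ and $M$ taken to be the unit object. The proposition already encapsulates the substantive content (the spreading out theorem \Cref{spreading}), so the work here is purely formal bookkeeping.

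First I would pick any index $0 \in I$ and restrict attention to the cofinal subsystem $\{i\in I : i \geq 0\}$, so that the transition maps $f_{i0}\colon X_i\to X_0$ are defined for every $i$ in the subsystem; passing to a cofinal subsystem does not change the filtered colimit $\varinjlim \mathsf{M}^{\rm coh}(X_i)$. The hypotheses of the proposition are met: the $X_i$ are qcqs over $S=\Spa K$ by assumption, the weak limit $X_\infty$ is qcqs and of finite Krull dimension, and $\pi_i \colon X_i \to \Spa K$ is qcqs since each $X_i$ is qcqs over $K$.

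Next I would apply \Cref{prop:spreadingonmotives} with $M = \one_{X_0} \in \RigDA(X_0)$. Since each pullback $f_{ij}^{*}$ is symmetric monoidal (cf.~\Cref{def:mot_sheaves}), one has canonical equivalences $f_{i0}^{*}\one_{X_0} \simeq \one_{X_i}$ for every $i \in I\cup\{\infty\}$. Consequently
\[
\pi_{i*}f_{i0}^{*}\one_{X_0} \simeq \pi_{i*}\one_{X_i} = \mathsf{M}^{\rm coh}(X_i),
\]
where the subscript $S=\Spa K$ is suppressed per the convention introduced just after property (viii) in \Cref{s:6ff}. Analogously $\pi_{\infty*}f_{\infty 0}^{*}\one_{X_0} \simeq \mathsf{M}^{\rm coh}(X_\infty)$, and under these identifications the canonical comparison map produced by the proposition is exactly $\varinjlim\mathsf{M}^{\rm coh}(X_i)\to\mathsf{M}^{\rm coh}(X_\infty)$.

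There is no genuine obstacle at this stage: the nontrivial input is the equivalence $\varinjlim_i \RigDA(X_i)\isoto \RigDA(X_\infty)$ in $\Prl$, which has been used to prove \Cref{prop:spreadingonmotives}. The only mild points of care are (i) verifying cofinality when restricting to indices $i\geq 0$, so that the filtered colimit is unchanged, and (ii) invoking that $\pi_i^{*}$ and $f_{ij}^{*}$ are monoidal, hence preserve units, which is built into \Cref{def:mot_sheaves}. The conclusion then follows by applying the proposition verbatim.
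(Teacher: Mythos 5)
Your proposal is correct and matches the paper's own argument: the corollary is stated there precisely as the specialization of \Cref{prop:spreadingonmotives} with $M=\one$ and $S=\Spa K$, with the identifications $\pi_{i*}f_{i0}^*\one\simeq\mathsf{M}^{\rm coh}(X_i)$ being implicit. Your added remarks on cofinality and monoidality of pullbacks are harmless bookkeeping that the paper leaves unsaid.
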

The previous result is particularly useful when one knows that the maps in the direct system induce equivalences in the homotopy category, e.g. if they are Frobenius lifts (see \Cref{sec:Frob}). If they are finite, we also deduce a special relation between the motive of the limit and each motive of the elements in the tower.

\begin{cor}\label{dirfact}
   Let $(X_i,f_{ij})$ be a projective system of qcqs smooth rigid varieties over $K$, whose transition maps $f_{ij}$ are finite surjective, and let $X_\infty$ be a qcqs adic space such that $X_\infty\sim \varprojlim X_i$.  The motive $\mathsf{M}^{\rm coh}(X_0)$ is a direct summand of the motive $\mathsf{M}^{\rm coh}(X_\infty)$. 
\end{cor}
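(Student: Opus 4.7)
My plan is to leverage \Cref{cor:Xinfty=colim} and then exhibit a retraction of the canonical map $\mathsf{M}^{\rm coh}(X_0)\to \mathsf{M}^{\rm coh}(X_\infty)$ built out of normalized trace maps. Since \Cref{cor:Xinfty=colim} identifies $\mathsf{M}^{\rm coh}(X_\infty)$ with $\varinjlim_i\mathsf{M}^{\rm coh}(X_i)$ in $\RigDA(K)$, by the universal property of filtered colimits such a retraction is equivalent to the data of a compatible system of maps $\rho_i\colon \mathsf{M}^{\rm coh}(X_i)\to \mathsf{M}^{\rm coh}(X_0)$, satisfying $\rho_j\circ f_{ij}^*=\rho_i$ for every $i\le j$ and $\rho_0=\id$. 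The conclusion then follows because $\RigDA(K)$, being stable and presentable, is idempotent complete.

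The key ingredient I need is a trace map: for any finite surjective morphism $f\colon Z\to W$ between smooth qcqs rigid $K$-varieties (of constant degree $d=\deg(f)$ on each connected component), a natural morphism $\mathrm{tr}_f\colon \mathsf{M}^{\rm coh}(Z)\to \mathsf{M}^{\rm coh}(W)$ in $\RigDA(K)$ satisfying $\mathrm{tr}_f\circ f^*=d\cdot \id_{\mathsf{M}^{\rm coh}(W)}$ and the composition law $\mathrm{tr}_{f\circ g}=\mathrm{tr}_f\circ\mathrm{tr}_g$. Granting this, I set $\rho_i:=\tfrac{1}{\deg(f_{0i})}\mathrm{tr}_{f_{0i}}$, so $\rho_0=\id$ automatically, and for $i\le j$ the multiplicativity $\deg(f_{0j})=\deg(f_{0i})\cdot\deg(f_{ij})$ combined with the trace identities yields
\[ \rho_j\circ f_{ij}^* = \tfrac{1}{\deg(f_{0j})}\mathrm{tr}_{f_{0i}}\circ(\mathrm{tr}_{f_{ij}}\circ f_{ij}^*) = \tfrac{\deg(f_{ij})}{\deg(f_{0j})}\mathrm{tr}_{f_{0i}} = \rho_i, \]
so the $\rho_i$'s assemble into the desired retraction.

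The main obstacle is therefore the construction of $\mathrm{tr}_f$ and the verification of its formal properties in the rigid analytic setting. A convenient approach is via finite correspondences: the graph $\Gamma_f$ defines an element of $\RigSmCor(W,Z)$, and the rigid analog of the $\Q$-linear equivalence $\DA_\Q\simeq \DM_\Q$ of Cisinski--Déglise produces the desired transfer on cohomological motives. Alternatively, the trace can be obtained directly from the six-functor formalism: factoring $f$ through the graph $Z\hookrightarrow Z\times_K W\to W$, absolute purity (item (iv) of \S\ref{s:6ff}) gives $f^!\one_W\simeq\one_Z$, and the counit $f_!f^!\to\id$ combined with $f_!\simeq f_*$ (as $f$ is proper) yields a morphism $f_*\one_Z\to\one_W$ in $\RigDA(W)$, whose pushforward along $W\to\Spa K$ is $\mathrm{tr}_f$. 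The projection formula and functoriality then reduce, via restriction to a dense open on which $f$ is finite étale together with the localization sequence, to standard compatibilities in the six-functor formalism; once these are in place, the rest of the argument is purely formal.
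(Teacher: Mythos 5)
Your proposal is correct and takes essentially the same route as the paper: the splitting comes from the transfers provided by the equivalence $\RigDA(K)\simeq\RigDM(K)$ (your normalized trace $\tfrac{1}{\deg f}\mathrm{tr}_f$ is exactly the paper's transpose correspondence $f^T$ divided by the degree), combined with the identification $\mathsf{M}^{\rm coh}(X_\infty)\simeq\varinjlim\mathsf{M}^{\rm coh}(X_i)$ of \Cref{cor:Xinfty=colim}. Your explicit verification that the normalized retractions are compatible with the transition maps (multiplicativity of degrees) is precisely what the paper's brief ``passing to the colimit'' tacitly uses, and the six-functor construction of the trace you sketch is only an optional alternative to the correspondence-theoretic one actually used.
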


 \begin{proof}
     We may and do assume that all the spaces involved are connected. We recall that, based on the equivalence between motives with and without transfers $\RigDA(K)\simeq\RigDM(K)$, the finite transition maps induce motivic maps $f_{ij}\colon \mathsf{M}( X_i)\to \mathsf{M}(X_j)$ as well as transpose correspondences $f_{ij}^T\colon \mathsf{M}(X_j)\to \mathsf{M}(X_i)$ such that $f_{ij}\circ f_{ij}^T=\deg(f_{ij})\cdot\id$. 
     We then deduce that the motive $\mathsf{M}(X_0)$ is a direct summand of each $\mathsf{M}(X_i)$, and dually that the motive $\mathsf{M}^{\rm coh}(X_0)$ is a direct summand of each $\mathsf{M}^{\rm coh}(X_i)$. We obtain the claim passing to the colimit $\varinjlim \mathsf{M}^{\rm coh}(X_i)\simeq \mathsf{M}^{\rm coh}(X_\infty)$. 
     %The maps $\frac{1}{\delta_{ij}}\cdot f_{ij}^T$ then define maps $\mathsf{M}(X_0)\to \mathsf{M}(X_i)$ compatible with the transition maps. Dually, they define maps $\mathsf{M}^{\rm coh}(X_i)\to \mathsf{M}^{\rm coh}(X_0)$ compatible with the transition maps therefore giving rise to a map $\mathsf{M}^{\rm coh}(X_    \infty)\simeq \varinjlim \mathsf{M}^{\rm coh}(X_i)\to \mathsf{M}^{\rm coh}(X_0)$. Note that the compoisition $\mathsf{M}^{\rm coh}(X_0)\to \mathsf{M}^{\rm coh}(X_\infty)\to \mathsf{M}^{\rm coh}(X_0)$ is the identity (by construction), proving the claim.
     \end{proof}

As an extra corollary of the spreading out result, we get the existence of ``motivic tubular neighborhoods'' for hypersurfaces defined as the zero-locus of a smooth map (see \cite[Corollary 4.5]{pWM}).

\begin{cor}\label{cor:tubes_spread}
    Let $f\colon X\to S$ be a smooth map of smooth rigid varieties over $K$ and $s\in S(K)$ a  rational point. There exists a final family of open neighborhoods $U$ of $s$ for which $\mathsf{M}^{\rm coh}(X_U)\simeq \mathsf{M}^{\rm coh}(X_s)$ in $\RigDA(K)$.
\end{cor}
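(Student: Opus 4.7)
The plan is to apply the spreading-out at a rational point (\Cref{rmk:spreading_ex}) to the compact \emph{homological} motive $\mathsf{M}_S(X)\in\RigDA(S)$, deduce the analogous statement for homological motives of the fibers, and then translate to cohomological motives via internal-Hom duality.

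Concretely, I would set $N := f_\sharp\one_X = \mathsf{M}_S(X)\in\RigDA(S)$; this is compact since $f$ is smooth (so $f_\sharp$ preserves compact objects). Applying \Cref{rmk:spreading_ex} to $N$ at $s$ produces a cofinal family of affinoid open neighborhoods $U\ni s$ for which
$$j_U^*N\simeq\Pi_U^* i_s^* N \quad\text{in }\RigDA(U),$$
where $j_U\colon U\hookrightarrow S$, $i_s\colon \Spa K\hookrightarrow S$ and $\Pi_U\colon U\to\Spa K$ is the structure map. Since $S$ is smooth at the rational point $s$, we may further restrict to the still-cofinal subfamily of $U\simeq \B^d_K$. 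Smooth base change for $f$ (property (vi)(1)) identifies $j_U^*N\simeq(a_U)_\sharp\one_{X_U}=\mathsf{M}_U(X_U)$ and $i_s^*N\simeq(a_s)_\sharp\one_{X_s}=\mathsf{M}_K(X_s)$. Applying $\Pi_{U\sharp}$, invoking the projection formula for $\sharp$ (a consequence of property (vii) via the smooth relation $f_\sharp = f_!(d)[2d]$), and using the $\B^1$-invariance $\mathsf{M}_K(\B^d_K)\simeq\one_K$, I obtain
$$\mathsf{M}_K(X_U)\simeq\mathsf{M}_K(U)\otimes\mathsf{M}_K(X_s)\simeq\mathsf{M}_K(X_s) \quad\text{in }\RigDA(K).$$

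To pass to cohomological motives, I would apply the internal-Hom functor $\HHom(-,\one_K)$ to both sides. The relevant identity is
$$\HHom(p_\sharp\one_Y,\one_K)\simeq p_*\one_Y=\mathsf{M}^{\rm coh}_K(Y), \quad p\colon Y\to\Spa K,$$
an immediate consequence of the adjunctions $p_\sharp\dashv p^*$ and $p^*\dashv p_*$ together with $p^*\one_K=\one_Y$. Dualizing the previous equivalence then yields the desired $\mathsf{M}^{\rm coh}_K(X_U)\simeq\mathsf{M}^{\rm coh}_K(X_s)$ for every $U\simeq\B^d_K$ in the cofinal family above. The only delicate step is the invocation of \Cref{rmk:spreading_ex}, which is why the argument routes through the homological motive $f_\sharp\one_X$ (automatically compact for smooth $f$) rather than directly through $f_*\one_X$, which may fail to be compact when $f$ is non-proper.
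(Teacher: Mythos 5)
Your argument is correct and is essentially the paper's own proof, just with the steps made explicit: both reduce to a cofinal family of poly-disc neighborhoods $U$ of $s$, apply the spreading-out of \Cref{rmk:spreading_ex} to the compact relative motive $\mathsf{M}_S(X)=f_\sharp\one_X$ to get $\mathsf{M}_U(X_U)\simeq\mathsf{M}_U(X_s\times U)$, push forward to $\Spa K$ and use contractibility of the poly-disc to obtain $\mathsf{M}_K(X_U)\simeq\mathsf{M}_K(X_s)$, and then pass to cohomological motives by dualizing via $\uhom(p_\sharp\one,\one)\simeq p_*\one$. The only cosmetic difference is that you spell out the smooth base change, projection formula and the duality identity, which the paper leaves implicit.
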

\begin{proof}
    The point $s$ has a final family of open neighborhoods $U$ which are all isomorphic to poly-discs. In light of  \Cref{rmk:spreading_ex}, we may and do suppose that $\mathsf{M}_U(X_U)\simeq \mathsf{M}_U(X_s\times U)$. This implies that $\mathsf{M}_K(X_U)\simeq \mathsf{M}_K(X_s\times U)=\mathsf{M}_K(X_s)$ and hence that $\mathsf{M}^{\rm coh}(X_U)=\mathsf{M}^{\rm coh}(X_s)$.
\end{proof}

\section{From characteristic $p$ to mixed characteristic}
\label{sec:fromktoK}
The aim of this section is to prove \Cref{intro:rig} that is, to use homotopy methods to give a new alternative definition of relative rigid cohomology, and even deduce new results about it. For the absolute case, see \cite{vezz-bang, vezz-MW}.

\subsection{Formal motives vs algebraic motives}
We first show how Elkik and Arabia's solutions to the (local) ``lifting problems'' in the Monsky--Washnitzer approach to rigid cohomology can be condensed into suitable motivic equivalences.

The first result is due to Ayoub (see e.g., \cite[Corollary 1.4.29]{ayoub-rig}), and is a consequence of the localization theorem.
\begin{thm}[{\cite[Theorem 3.1.10]{agv}}]
\label{thm:formal-mot-alg-mot}
Let $\mfS$ be a formal scheme. The functors
\begin{equation}
%\label{eqn:adj-FDA-DA}
\sigma^*:\FDA^{}(\mathfrak{S})
\rightleftarrows \DA_{}^{}
(\mathfrak{S}_{\sigma}):\sigma_*.
\end{equation}
induced by taking special fibers, are equivalences of $\infty$-categories.
\end{thm}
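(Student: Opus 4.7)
The plan is to derive the equivalence from the localization property of the six-functor formalism, as set up for formal motives in analogy with Section~\ref{s:6ff}. The key geometric input is that $\mathfrak{S}$ and its reduced special fiber $\mathfrak{S}_\sigma$ share the same underlying topological space: therefore the closed immersion $\sigma\colon \mathfrak{S}_\sigma\hookrightarrow\mathfrak{S}$ admits the empty formal scheme as its open complement. This is the formal analogue of the statement that a universal homeomorphism leaves no room for a nontrivial open complement, and it is what makes the situation genuinely rigid.

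Granting the localization fiber sequences of \ref{item:loc}(v) in $\FDA$, applied to $\sigma$ with empty open complement $j\colon\emptyset\hookrightarrow \mathfrak{S}$, both $j_\sharp j^*$ and $j_*j^*$ vanish identically. We obtain natural equivalences
$$
\id_{\FDA(\mathfrak{S})}\;\simeq\;\sigma_*\sigma^*,\qquad \sigma_*\sigma^!\;\simeq\;\id_{\FDA(\mathfrak{S})}.
$$
The first says the unit of $\sigma^*\dashv \sigma_*$ is an equivalence, hence $\sigma^*$ is fully faithful; the second, combined with the adjunction $\sigma_*\dashv \sigma^!$, says the counit $\sigma_*\sigma^!\to\id$ is an equivalence, so that $\sigma_*$ is also fully faithful. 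Two mutually adjoint fully faithful functors between presentable \icats are automatically mutually inverse equivalences, which yields the conclusion.

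The main obstacle is thus the establishment of the localization sequences in $\FDA$, which is where the actual lifting theorems enter. Two ingredients are needed: first, the topological invariance of the small \'etale site (Elkik) ensures that formal \'etale covers of $\mathfrak{S}$ correspond to \'etale covers of $\mathfrak{S}_\sigma$, so the \'etale topology is transported correctly; second, since the formal affine line $\hat{\mathbb{A}}^1_{\mathfrak{S}}$ has special fiber $\mathbb{A}^1_{\mathfrak{S}_\sigma}$, the $\mathbb{A}^1$-localization on both sides is compatible. A convenient sanity check at the level of generators is that any formal smooth $\mathfrak{X}/\mathfrak{S}$ is controlled, locally up to $\hat{\mathbb{A}}^1$-homotopy, by $\mathfrak{X}\times_{\mathfrak{S}}\mathfrak{S}_\sigma$, because smooth lifts of smooth $\mathfrak{S}_\sigma$-schemes exist locally by Elkik and are unique up to homotopy by Arabia; this mirrors, at the presheaf level, the equivalence provided by the localization-based argument above.
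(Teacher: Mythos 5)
Your first half is the paper's own argument: since $\mathfrak{S}_\sigma$ has empty open complement in $\mathfrak{S}$, the localization triangle $j_\sharp j^*\to\id\to\sigma_*\sigma^*$ collapses to $\id\simeq\sigma_*\sigma^*$, so the unit of $\sigma^*\dashv\sigma_*$ is invertible and $\sigma^*$ is fully faithful. The gap is in your second half. From $\sigma_*\sigma^!\simeq\id$ you conclude that ``$\sigma_*$ is also fully faithful'', but invertibility of the counit of the adjunction $\sigma_*\dashv\sigma^!$ makes the \emph{right} adjoint $\sigma^!$ fully faithful, not $\sigma_*$. In the adjoint triple $\sigma^*\dashv\sigma_*\dashv\sigma^!$, full faithfulness of $\sigma^!$ is equivalent to full faithfulness of $\sigma^*$, so you have gained nothing new --- unsurprisingly, since the second fiber sequence $\sigma_*\sigma^!\to\id\to j_*j^*$ is obtained from the first by passing to right adjoints and carries exactly the same information. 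Both triangles are identities of endofunctors of the source $\FDA(\mathfrak{S})$; they cannot see whether $\sigma^*$ reaches all of $\DA(\mathfrak{S}_\sigma)$, i.e.\ they cannot produce the counit equivalence $\sigma^*\sigma_*\to\id$ that you still need.

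That missing step is exactly where the genuine geometric input lives, and it is not a ``sanity check''. Either one proves directly that $\sigma_*$ is fully faithful for the closed immersion $\sigma$ --- the nontrivial half of a localization theorem, not a formal consequence of the two triangles --- or, as in the cited proofs, one proves essential surjectivity of the (already fully faithful, colimit-preserving) functor $\sigma^*$: its essential image is closed under colimits, and it contains the generators $\mathsf{M}(X)$ for $X$ smooth over $\mathfrak{S}_\sigma$ because, by Elkik, such $X$ admit smooth formal lifts over $\mathfrak{S}$ locally for the topology in question, and descent then places $\mathsf{M}(X)$ in the image (Arabia's homotopy-uniqueness of lifts of morphisms enters in making this functorial). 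So Elkik--Arabia must be promoted from your closing remark to the actual second half of the proof. Your unit argument does coincide with the one-line justification given in the paper (which defers the rest to the reference), but as written your text does not close the equivalence.
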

Indeed, for a formal scheme $\mathfrak{S}$, the complement of $\mathfrak{S}_\sigma$ in $\mathfrak{S}$ is empty and so by the localization triangle we obtain that the natural transformation $\id \to \sigma_* \sigma^*$ is invertible.

There is also a log-generalization of the equivalence provided by \cref{thm:formal-mot-alg-mot} above, based on a modification of the argument to the logarithmic setting. For $S$ a log scheme (resp. $\mathfrak{S}$ a log formal scheme), we let $\mathrm{logDA}^{\rm eff}({S})$ (resp. $\mathrm{logFDA}^{\rm eff}(\mathfrak{S})$) be the full monoidal $\infty$-subcategory of $\A^1$-invariant objects in the category $\mathrm{Shv}_{s\acute{e}t}(\Sm/{S}; \mcD(\Q))$ (resp. $\mathrm{Shv}_{s\acute{e}t}(\Sm/\mathfrak{S}; \mcD(\Q))$) of strict-étale hypersheaves on the category of log smooth log schemes over $S$ (resp. log formal schemes over $\mathfrak{S}$). This is the exact generalization to the log setting of \Cref{def:mot_sheaves}. Formally inverting the Tate twist $\one(1)$ we obtain the non-effective version $\mathrm{logFDA}^{}({S})$ (resp. $\mathrm{logFDA}^{}(\mathfrak{S})$). See \cite[Section 2.2]{pWM} for an extended discussion,  \cite{kato_log} for basic results about fine log structures and \cite[Appendix A]{koshikawa} for non-necessarily fine log structures on formal schemes. 

\begin{thm}[{\cite[Theorem 2.22]{pWM}}]\label{thm:nil-invariance_log}
    	Let $\mfS$ is a quasi-coherent integral log formal scheme. Assume that at least one of the following holds:
			\begin{enumerate}
\item The log structure $M_{\mfS}$ is fine;
\item 		The underlying scheme 	$\underline{\mfS}$ has 	finite topological Krull dimension.
			\end{enumerate}%
		 Then the functors
\begin{equation}
%\label{eqn:adj-FDA-DA}
\sigma^*:\mathrm{logFDA}^{}(\mathfrak{S})
\rightleftarrows \mathrm{logDA}_{}^{}
(\mathfrak{S}_{\sigma}):\sigma_*.
\end{equation}
induced by taking special fibers, are equivalences of $\infty$-categories.
\end{thm}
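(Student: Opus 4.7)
The plan is to mimic the proof of \Cref{thm:formal-mot-alg-mot}, replacing the classical localization triangle by its logarithmic analogue. The special fiber inclusion $\sigma\colon \mfS_\sigma \hookrightarrow \mfS$ is a strict closed immersion of log formal schemes whose open complement $j$ is topologically empty, and hence the log motivic category over this complement is trivial. Granted a log-motivic localization fiber sequence of endofunctors
\[
j_\sharp j^* \to \id \to \sigma_*\sigma^*
\]
on $\mathrm{logFDA}(\mfS)$, the first term vanishes, so the unit $\id \to \sigma_*\sigma^*$ is an equivalence. The dual sequence $\sigma_*\sigma^! \to \id \to j_*j^*$ similarly gives $\sigma_*\sigma^! \simeq \id$. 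Combining the two, and using that $j^*$ is the zero functor so that every object lies in its kernel, one concludes that $\sigma_*$ is both fully faithful and essentially surjective, hence an equivalence with quasi-inverse $\sigma^*$.

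The first step I would carry out is a reduction to the effective categories $\mathrm{logFDA}^{\mathrm{eff}}$ and $\mathrm{logDA}^{\mathrm{eff}}$, since the inversion of the Tate twist $\one(1)$ is formally symmetric on the two sides and commutes with $\sigma^*$. The substantive task is then to establish the log-localization fiber sequence in the generality prescribed by the theorem. Following Ayoub's classical template, this amounts to checking that $\sigma^*$ is monoidal and colimit-preserving, that $\sigma_*$ commutes with the relevant filtered colimits, and that the putative fiber sequence can be tested on a family of compact generators coming from the Yoneda images of log smooth (formal) schemes over $\mfS$. Once this is in place, the vanishing of $\mathrm{logFDA}$ (resp.\ $\mathrm{logDA}$) over the empty space makes the remainder of the argument formal.

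The main obstacle lies precisely in setting up the log-localization triangle for a strict closed immersion when $M_\mfS$ is not assumed to be fine. This is where the two alternative hypotheses enter. Under (1), fineness of $M_\mfS$ places us squarely in the Kato framework of \cite{kato_log}, so that charts exist locally and the localization can be built by arguments parallel to Ayoub's non-logarithmic one. Under (2), the finite topological Krull dimension of $\underline{\mfS}$ guarantees that the strict étale hypersheaves underlying $\mathrm{logFDA}$ and $\mathrm{logDA}$ are controlled by bounded-cohomology arguments, so that the localization can be checked stalk-wise without recourse to fineness, in the spirit of the non-fine formalism developed in \cite[Appendix A]{koshikawa}. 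Either way, the hard content is absorbed in producing the localization triangle; the equivalence $\sigma^*\simeq \sigma_*^{-1}$ then follows by the formal consequence outlined above.
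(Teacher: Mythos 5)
Your overall strategy (reduce to the effective categories, use that the open complement of the special fibre is empty) is indeed the route the paper indicates, but as written the deduction of the equivalence is incomplete. With $j$ the empty open immersion, the localization sequence $j_\sharp j^*\to\id\to\sigma_*\sigma^*$ gives exactly one thing: the unit $\id\to\sigma_*\sigma^*$ is invertible, i.e.\ $\sigma^*$ is fully faithful and $\sigma_*$ is essentially surjective. Neither this nor the dual sequence $\sigma_*\sigma^!\to\id\to j_*j^*$ (which, incidentally, presupposes exceptional functors $\sigma^!$ for log motives that you have not constructed, and which in any case only concerns endofunctors of $\mathrm{logFDA}(\mfS)$) says anything about the counit $\sigma^*\sigma_*\to\id$ on $\mathrm{logDA}(\mfS_\sigma)$, so your conclusion that ``$\sigma_*$ is both fully faithful and essentially surjective'' does not follow. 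By the triangle identity it would suffice to show that $\sigma_*$ is conservative, or equivalently that the generators of $\mathrm{logDA}(\mfS_\sigma)$ --- motives of log smooth log schemes over $\mfS_\sigma$ --- lie in the essential image of the colimit-preserving fully faithful functor $\sigma^*$. This is the genuinely non-formal heart of the theorem: one must lift log smooth log schemes over $\mfS_\sigma$, strict-étale locally, to log smooth log formal schemes over $\mfS$ (log smooth deformation theory à la Kato in the fine case, with a substitute in the merely quasi-coherent integral case), and this is precisely where the lifting/deformation input analogous to Elkik--Arabia in \Cref{thm:formal-mot-alg-mot} enters. Declaring ``the remainder of the argument formal'' skips it.

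A second, related point: you locate the role of hypotheses (1) and (2) in ``establishing the log-localization triangle'', but for this particular nil-immersion proving the localization sequence \emph{is} proving invertibility of the unit (the $j$-term is zero by fiat), so organizing the proof this way is circular; the triangle is not an available black box in the log setting, where localization is known to be delicate. The actual content is the comparison of the strict étale (hyper)sheaf categories on log smooth objects over $\mfS$ and over $\mfS_\sigma$, where fineness makes charts and log smooth deformation theory available, while the finite topological Krull dimension hypothesis is the kind of assumption that controls hypercompleteness/hyperdescent so that the comparison can be checked locally; attributing both to a stalkwise verification of a localization triangle misplaces where the work happens. The reduction to effective categories via compatibility of $\sigma^*$ with Tate twists is fine.
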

%The non-log version \Cref{} this theorem, over which the above generalization is based,  is due to Ayoub (see ). 

\subsection{Dagger motives vs analytic motives}

The possibility to ``lift'' varieties over $\mfS_\sigma$ to (formal) motives over $\mfS$ and hence to (analytic) motives over $\mfS_{\Q_p}$ is not yet enough to produce a motivic $p$-adic cohomology theory ``\`a la de Rham'', which is finite dimensional on objects of geometric origin (for example on a smooth quasi-compact rigid analytic variety
$X$ over $K$). The problem being, that the de Rham complex of the closed disc $\B^1$ has non-trivial higher cohomology. 

This obstacle is classically overcome as follows. Any smooth rigid analytic variety $X$ can be embedded, locally on $X$, as a strict open subvariety of a larger smooth variety $X'$. This choice is called a ``dagger variety structure on $X$'' and denoted by $X^\dagger$. One can then replace the de Rham complex $\Omega_{X/K}$ by its subcomplex $\Omega_{X^\dagger/K}$ given by those differential forms that are defined over some strict neighborhood of $X$ inside $X'$. One can make a similar construction relatively over a (dagger) base $S$.

Just like the case of formal lifts, the choice of dagger variety structures are only local, and canonical up to homotopy. It comes as no surprise that this admits a motivic restatement.

\begin{thm}[{\cite[Theorem 4.23]{vezz-MW}}]\label{thm:dagger_or_not_dagger}
    Let $\mcS^\dagger $ be a dagger variety. The functors
\begin{equation}
%\label{eqn:adj-FDA-DA}
l^*:\mathrm{RigDA}^{}(\mcS^\dagger)
\rightleftarrows \mathrm{RigDA}_{}^{}
(\mcS):l_*.
\end{equation}
induced by taking the underlying rigid analytic variety, are equivalences of $\infty$-categories.
\end{thm}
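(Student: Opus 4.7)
The plan is to exploit the fact that, by definition, a dagger variety $X^\dagger$ over $\mcS^\dagger$ is the datum of its underlying rigid variety $X$ together with a cofiltered system of strict open embeddings $X \hookrightarrow X_\epsilon$ into slightly larger rigid varieties, with $X$ obtained as a Huber-type weak limit $X \sim \varprojlim_\epsilon X_\epsilon$. The forgetful functor $l^*$ remembers only $X$ and discards the system $\{X_\epsilon\}$. The goal is to show that this information is already invisible after $\B^1$-localization and étale sheafification, i.e.\ in $\RigDA$.

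First I would verify that $l^*$ is fully faithful. By étale descent and the universal property of $\RigDA$, it suffices to compute the mapping spectrum between motives of smooth affinoid dagger varieties $\mathsf{M}_{\mcS^\dagger}(X^\dagger)$ and $\mathsf{M}_{\mcS^\dagger}(Y^\dagger)$, and compare it with $\Map(\mathsf{M}_{\mcS}(X), \mathsf{M}_{\mcS}(Y))$. Writing $Y^\dagger$ as a weak limit of its strict neighborhoods $\{Y_\epsilon\}$ and invoking \Cref{spreading}, one obtains $\RigDA(Y) \simeq \varinjlim_\epsilon \RigDA(Y_\epsilon)$; dually, the dagger mapping space is computed as a filtered colimit over $\epsilon$. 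A morphism of dagger motives $\mathsf{M}(X^\dagger) \to \mathsf{M}(Y^\dagger)$ therefore factors, up to homotopy, through some $\mathsf{M}(X) \to \mathsf{M}(Y_\epsilon)$, and $\B^1$-invariance together with the fact that $X \hookrightarrow X_\epsilon$ is an $\epsilon$-shrinking (hence itself a $\B^1$-homotopy equivalence in a suitable sense) identifies the two mapping spaces.

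For essential surjectivity, the strategy is to produce, locally on $\mcS$, a dagger structure on every smooth rigid variety $X$ over $\mcS$. By étale descent one reduces to the affinoid case: embed $X \hookrightarrow \B^n$ as a closed subvariety, choose a strictly larger polydisc $\B^n_r$ with $r>1$, and take $X_r$ to be the preimage of $X$ in $\B^n_r$ along a suitable retraction. This yields a strict open thickening $X \hookrightarrow X_r$ and hence a dagger structure $X^\dagger$. The lifting results of Elkik and Arabia, suitably upgraded to motivic statements as already invoked in \cite{vezz-MW}, guarantee that this construction is canonical up to contractible choice and so glues to descent data in $\RigDA(\mcS^\dagger)$, thereby exhibiting every compact object of $\RigDA(\mcS)$ as $l^*$ of a dagger motive.

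The main obstacle I expect is the systematic verification that the unit and counit of the adjunction $(l^*, l_*)$ are equivalences on a set of generators, and that the argument is compatible with the monoidal structure and the six-functor formalism so as to propagate from generators to all of $\RigDA$. Concretely, this reduces to showing that the natural map $\mathsf{M}_\mcS(X) \to l^* \mathsf{M}_{\mcS^\dagger}(X^\dagger)$ is invertible, which in turn hinges on $\B^1$-contracting the direction normal to $X$ inside the ambient polydisc while simultaneously spreading out along the cofiltered system of neighborhoods; both steps are applications of \Cref{spreading} and $\B^1$-invariance, but the subtle point is ensuring that the resulting equivalences assemble coherently across the étale site of $\mcS^\dagger$ and do not depend on auxiliary embedding choices.
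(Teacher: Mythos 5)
Your proposal collapses precisely at the point where the real content of the theorem sits. First, the appeal to \Cref{spreading} is not legitimate: a dagger structure $X\Subset X_\epsilon$ does \emph{not} exhibit $X$ as a weak limit $X\sim\varprojlim_\epsilon X_\epsilon$ in Huber's sense. The inverse limit of the strict neighborhoods is the \emph{closure} of $X$ in any $X_{\epsilon_0}$, which contains extra higher-rank (outward type-5) boundary points; for instance $\bigcap_\epsilon \B^1_{1+\epsilon}$ contains the rank-two point infinitesimally outside radius $1$, which does not lie in $\B^1$. So spreading out only gives $\varinjlim_\epsilon\RigDA(X_\epsilon)\simeq\RigDA(\overline{X})$, and identifying $\RigDA(\overline{X})$ with $\RigDA(X)$ — i.e.\ showing that overconvergence is invisible to motives — is essentially the statement you are trying to prove, not an input. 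Second, the sentence claiming that ``$\B^1$-invariance together with the fact that $X\hookrightarrow X_\epsilon$ is an $\epsilon$-shrinking (hence a $\B^1$-homotopy equivalence) identifies the two mapping spaces'' hides the entire technical heart of the cited proof. What is needed is a statement about the \emph{representable presheaves} on the rigid and dagger sites: every morphism of affinoids $W\to X$ landing in a dagger affinoid must be $\B^1$-homotopic to one that overconverges to some strict neighborhood, and any two overconvergent approximations must be canonically $\B^1$-homotopic. This is the explicit approximation lemma (via power-series estimates and explicitly constructed homotopies) on which the proof of the quoted theorem in \cite{vezz-MW} rests; the mere fact that $X$ and $X_\epsilon$ have equivalent motives (both being contractible when they are polydiscs) says nothing about maps \emph{into} them from arbitrary affinoids, which is what controls the unit and counit of $(l^*,l_*)$ on generators.

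The essential-surjectivity step is also off, though more easily repaired: taking ``the preimage of $X$ in $\B^n_r$ along a suitable retraction'' does not produce a strict thickening of a closed subvariety $X\subset\B^n$, and no appeal to Elkik--Arabia gluing is needed. Since $l^*$ is (once full faithfulness is established) a colimit-preserving functor between compactly generated categories, it suffices to observe that a set of compact generators of $\RigDA(\mcS)$ is hit: smooth rigid varieties that are étale over polydiscs (or, after descent, affinoids with an evident dagger model) generate, and these visibly lift to dagger varieties. So the correct architecture is: approximation lemma $\Rightarrow$ equivalence on mapping spectra between generators $\Rightarrow$ full faithfulness $\Rightarrow$ essential surjectivity by generation — with the first arrow being the genuine analytic input that your proposal leaves unproved.
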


We can then introduce a suitable generalization of the overconvergent site above a scheme $S/k$ as follows.

\begin{dfn}
    Let $S$ be a scheme over a perfect field $k$ of characteristic $p$. The category $\Fr/S$ of generalized proper frames over $S$ is given by triples $(\mfT,f\colon\mfT_\sigma\to S, \mfT_{\Q_p}^\dagger)$ where  $\mfT$ is a formal scheme over $W(k)$ (not necessarily adic over it), $f$ is a morphism over $k$ and $\mfT_{\Q_p}^\dagger$ is a dagger structure on the $\Q_p$-generic fiber of $\mfT$.
\end{dfn}

Our notation for the name is justified by the following remark: any proper frame over $S$ in the sense of Berthelot $(T\subset \bar{T}\subset\mfP, f\colon T\to S)$ given by an open inclusion into a closed subscheme of a proper formal scheme $\mfP$ induces a generalized proper frame by taking $(\mfU, f, ]T[_{\mfP}\Subset]\bar{T}[_{\mfP})$, where $\mfU$ is the open formal subscheme of $\mfP^{\wedge\bar{T}}$ whose special fiber is $T$, and whose $\Q_p$-generic fiber is the tube $]T[_{\mfP}$.

We can then mix together \Cref{thm:formal-mot-alg-mot} and \Cref{thm:dagger_or_not_dagger} to finally create a Monsky--Washnitzer relative realization (with a Frobenius structure!).

\begin{thm}[{\cite[Proposition 3.19]{ev}}]
    Let $S$ be a scheme over $k$. There is a canonical functor
    $$
    \mathcal{MW}_S\colon\DA(S)\to \left(\lim_{(\mfT,f, \mfT_{\Q_p}^\dagger)\in\Fr/S}\RigDA^\dagger(\mfT_{\Q_p}^\dagger)\right)^\varphi.
    $$
By taking $S=\Spec k$, we may in particular obtain the ``absolute'' realization
    $$
    \mathcal{MW}_k\colon\DA(k)\to\RigDA^\dagger(W(k^{})[1/p])^\varphi.
    $$
\end{thm}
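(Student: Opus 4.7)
The plan is to build $\mathcal{MW}_S$ by stitching together, for each generalized proper frame $(\mfT, f\colon \mfT_\sigma\to S, \mfT_{\Q_p}^\dagger)$, the three motivic equivalences already established in the paper, then to organize these pointwise functors into a single functor out of $\Fr/S$, and finally to upgrade the construction to carry a Frobenius action so that it lands in $\varphi$-fixed points.

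First, for a fixed frame I would define the composite
$$
\DA(S) \xrightarrow{\;f^*\;} \DA(\mfT_\sigma) \xrightarrow{\;(\sigma^*)^{-1}\;} \FDA(\mfT) \xrightarrow{\;(-)^{\rig}\;} \RigDA(\mfT_{\Q_p}) \xrightarrow{\;(l^*)^{-1}\;} \RigDA^\dagger(\mfT_{\Q_p}^\dagger),
$$
where the first arrow is the usual motivic pullback, the second is the inverse of the equivalence of \Cref{thm:formal-mot-alg-mot}, the third is the Raynaud generic fiber functor at the level of formal motives (constructed, for instance, in \cite{ayoub-rig,agv}), and the fourth is the inverse of the equivalence of \Cref{thm:dagger_or_not_dagger}. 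Each of these is symmetric monoidal and colimit-preserving, so the composite lives in $\Prlm$ as required.

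Second, I would check that this assignment is functorial in the frame. A morphism of frames $(\mfT',f',\mfT'^\dagger_{\Q_p})\to (\mfT,f,\mfT^\dagger_{\Q_p})$ induces pullback functors at each stage (on $\DA$ of the special fibers, on $\FDA$ of the formal schemes, on $\RigDA$ of the generic fibers, and on $\RigDA^\dagger$ of the dagger structures). The point is that the equivalences of \Cref{thm:formal-mot-alg-mot} and \Cref{thm:dagger_or_not_dagger}, as well as the generic fiber functor, are natural with respect to pullback — this follows from the six-functor formalism of \S\ref{s:6ff} applied to the respective ambient categories. Assembling yields a functor $\Fr/S \to \Prlm$ sending $(\mfT,f,\mfT^\dagger_{\Q_p})$ to $\RigDA^\dagger(\mfT^\dagger_{\Q_p})$, and the pointwise data above promotes to a functor
$$
\DA(S) \longrightarrow \lim_{(\mfT,f,\mfT^\dagger_{\Q_p})\in\Fr/S} \RigDA^\dagger(\mfT^\dagger_{\Q_p}).
$$

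Third, I would introduce the Frobenius. Since $S$ is a $k$-scheme of characteristic $p$, \Cref{sec:Frob} provides a canonical enhancement $\DA(S)\to\DA(S)^{\varphi^*}$. On the target side, lifts of the absolute Frobenius to $W(k)$, and more generally to each frame $\mfT$ over $W(k)$, induce endomorphisms of $\RigDA^\dagger(\mfT^\dagger_{\Q_p})$; their uniqueness up to homotopy (Elkik–Arabia, reflected motivically in \Cref{thm:formal-mot-alg-mot} and \Cref{thm:dagger_or_not_dagger}) ensures that the composite above intertwines the source and target $\varphi^*$-actions. Passing to homotopy fixed points in $\Prloo$ on both sides then produces the desired $\mathcal{MW}_S$; the absolute case $S=\Spec k$ is recovered by restricting to the final frame $\Spf W(k)$.

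The main obstacle is the functoriality of the composite in the frame: verifying that $(\sigma^*)^{-1}$, the Raynaud generic fiber functor, and $(l^*)^{-1}$ all commute coherently with pullback along arbitrary morphisms of frames (which need not be adic, nor of the form appearing in Berthelot's classical setup). A secondary, more technical point is the coherent propagation of the Frobenius structure through the generic fiber functor, which forces one to work with Witt-vector Frobenius lifts rather than the absolute Frobenius itself, and to verify that the motivic semi-separatedness of \Cref{thm:semi-separat} makes the two agree after $\varphi^*$-fixed points.
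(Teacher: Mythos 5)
Your first two steps are essentially the construction the paper has in mind: for each generalized proper frame one composes $f^*$ with the inverse of the special-fibre equivalence of \Cref{thm:formal-mot-alg-mot}, the generic-fibre functor, and the inverse of the dagger equivalence of \Cref{thm:dagger_or_not_dagger}, and then assembles these over $\Fr/S$. The coherence issue you flag — turning the frame-wise composites into an honest functor $(\Fr/S)^{\op}\to\Prlm$ together with a natural transformation out of the constant diagram $\DA(S)$, for morphisms of frames that are neither adic nor of Berthelot's shape — is indeed where the actual work of the cited reference lies; your appeal to ``the six-functor formalism'' only names the problem, but at the level of detail of this survey that is acceptable.

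The genuine gap is your third step. You propose to let $\varphi$ act on each $\RigDA^\dagger(\mfT^\dagger_{\Q_p})$ through a chosen lift of Frobenius to the frame $\mfT$, invoking Elkik--Arabia uniqueness up to homotopy. But a generalized proper frame is an arbitrary formal scheme over $W(k)$ (typically the completion of a proper formal scheme along a closed subscheme of its special fibre): it is not smooth, global Frobenius lifts need not exist, and the Elkik/Arabia statements concern smooth weakly complete affine lifts, so they do not repair this; as written the step fails. The mechanism intended by the paper requires no lifting at all: on the source, the $\varphi$-structure is the canonical enrichment $\DA(S)\to\DA(S)^{\varphi^*}$ of \S\ref{sec:Frob}, which exists because the relative Frobenius becomes invertible in motives by semi-separatedness (\Cref{thm:semi-separat}); on the target, the only Frobenius used is the canonical Witt-vector automorphism of $W(k)$ (canonical since $k$ is perfect), which acts on the whole diagram of frames by base change, together with the absolute Frobenius of the special fibres, and the compatibility of this action with the frame-wise functors is again a consequence of the special-fibre equivalence of \Cref{thm:formal-mot-alg-mot} plus semi-separatedness on the characteristic-$p$ side. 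Reformulating your Frobenius step along these lines removes both the existence and the choice problems; the absolute statement is then obtained, as you say, by evaluating at the frame $(\Spf W(k),\id,\Spa(W(k)[1/p])^\dagger)$.
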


\subsection{de Rham and rigid cohomology}
As we have finally completed the journey from characteristic $p$ to characteristic $0$ (plus a dagger structure), we have all the ingredients to define a ``de Rham-like'' cohomology theory.

\begin{prop}[{\cite[Corollary 4.39]{lbv},\cite[Corollary 3.20]{ev}}]
    The functor $$\mcX^\dagger\mapsto\underline{\Omega}_{\mcX^\dagger/\mcS^\dagger}$$
    induces a motivic realization functor
    $$
    \RigDA(\mcS^\dagger)\to \mcD(\mcS^\dagger)^{op}
    $$
    where on the right hand side we consider the category of overconvergent solid modules on $\mcS^\dagger$. In particular, we deduce (absolute and relative) rigid cohomology functors
    $$
   R\Gamma_{\rig} \colon\DA(k)^{op}\to \mcD(W(k)[1/p])^\varphi\qquad R\Gamma_{\rig}(-/S)\colon \DA(S)^{op}\to \left(\lim \mcD(\mfT^\dagger_{\Q_p})\right)^\varphi.
    $$
\end{prop}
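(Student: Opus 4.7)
The plan is to produce the realization functor by invoking the universal property of $\RigDA(\mcS^\dagger)$ applied to the presheaf $\underline{\Omega}_{-/\mcS^\dagger}\colon \RigSm^\dagger/\mcS^\dagger \to \mcD(\mcS^\dagger)^{\op}$ of overconvergent relative differentials. As recalled in the remark following \Cref{df:mixed_Weil} (which adapts to the dagger setting thanks to \Cref{thm:dagger_or_not_dagger}), a symmetric monoidal presheaf of solid modules extends to a symmetric monoidal colimit-preserving functor out of $\RigDA(\mcS^\dagger)$ precisely when it satisfies \'etale hyperdescent, $\B^1$-invariance, and Tate stability.

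I would verify the four items in order. The symmetric monoidal structure follows from the K\"unneth formula $\underline{\Omega}_{X\times Y/\mcS^\dagger} \simeq \underline{\Omega}_{X/\mcS^\dagger} \otimes \underline{\Omega}_{Y/\mcS^\dagger}$, which holds already at the level of overconvergent forms by direct inspection. \'Etale descent follows from the combination of (i) the equivalence $\underline{\Omega}_{X/\mcS^\dagger} \otimes^{\mathbb L}_{\mcO_X} \mcO_Y \to \underline{\Omega}_{Y/\mcS^\dagger}$ for an \'etale map $Y\to X$, and (ii) faithfully flat descent in the solid derived category, which guarantees that the \v{C}ech-to-derived-pushforward spectral sequence of an \'etale cover converges in $\mcD(\mcS^\dagger)^{\op}$. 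The $\B^1$-invariance is the overconvergent Poincar\'e lemma: since $\mcO_{\mcS^\dagger}\langle x\rangle^\dagger$ is flat with the overconvergent de Rham complex acyclic in positive degrees and $H^0 = \mcO_{\mcS^\dagger}$, the projection $\B^{1,\dagger}\times\mcS^\dagger \to \mcS^\dagger$ induces the required equivalence. It is here that the dagger structure is indispensable: for the closed rigid disc without overconvergence, the first de Rham cohomology fails to vanish. Finally, Tate stability reduces to computing $H^*_{\dR}(\P^{1,\dagger}/\mcS^\dagger)$ via the Mayer--Vietoris cover of $\P^1$ by two dagger discs, which produces the expected invertible object, shifted by $2$ and Tate-twisted by $1$.

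With the realization $\RigDA(\mcS^\dagger) \to \mcD(\mcS^\dagger)^{\op}$ in hand, the rigid cohomology functors follow by composition. In the absolute case, composing with $\mathcal{MW}_k\colon \DA(k)\to \RigDA^\dagger(W(k)[1/p])^\varphi$ from the previous theorem yields $R\Gamma_{\rig}\colon \DA(k)^{\op}\to \mcD(W(k)[1/p])^\varphi$; the Frobenius structure transfers automatically, because Kähler differentials are natural in the base ring and in particular Frobenius-equivariant. For the relative case, one applies the de Rham realization pointwise at each generalized proper frame $(\mfT,f,\mfT^\dagger_{\Q_p})\in\Fr/S$ and takes the induced functor on the limit, which exists since the whole construction is functorial in~$\mcS^\dagger$.

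The main technical obstacle will be \'etale hyperdescent in the overconvergent setting. Na\"ively, overconvergent forms do not satisfy descent on the nose: \v{C}ech--Alexander complexes with values in overconvergent modules need not compute derived pushforwards unless one works in a category with enough room for topological limits and completions. This is precisely the role of the solid category $\mcD(\mcS^\dagger)^{\op}$, whose formalism of condensed/solid modules provides the correct derived framework in which the required spectral sequence converges and in which the resulting presheaf is genuinely hypercomplete. Carrying out this verification is the main technical content of the construction; the remaining properties, while requiring computation, are ultimately standard consequences of the Poincar\'e lemma and the geometry of $\P^1$.
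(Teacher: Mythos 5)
Your proposal is correct and takes essentially the same route as the construction the paper cites from \cite{lbv} and \cite{ev}: one verifies that the overconvergent de Rham presheaf $\mcX^\dagger\mapsto\underline{\Omega}_{\mcX^\dagger/\mcS^\dagger}$ is a K\"unneth-compatible \'etale hypersheaf of solid modules, is $\B^1$-invariant via the overconvergent Poincar\'e lemma (precisely where the dagger structure is needed), and satisfies Tate stability through the $\P^1$ computation, so that it factors through $\RigDA(\mcS^\dagger)$ by the universal property. The passage to the absolute and relative rigid cohomology functors by composing with the Monsky--Washnitzer functor (Frobenius equivariance coming from compatibility of $\underline{\Omega}$ with base change) and by taking the limit over frames likewise matches the intended argument.
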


We may then prove specific properties of the realization of smooth and proper varieties by using the fact that their associated motives are dualizable: one uses crucially the explicit description of dualizable solid modules as (classical) perfect complexes, see \cite{andreychev}.

\begin{cor}[{\cite[Theorem 4.46]{lbv},\cite[Corollary 3.20]{ev}}]
    If $M\in\DA(S)$ is dualizable, then $R\Gamma_{\rig}(M/S)$ is a pefect complex on each frame. Moroever, its cohomology groups $R^i\Gamma_{\rig}(M/S)$ have a canonical structure of overconvergent $F$-isocrystals. 
\end{cor}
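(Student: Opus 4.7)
The plan is to reduce the corollary to three structural facts: the monoidality of the realization, Andreychev's identification of dualizable solid modules with perfect complexes, and the Frobenius enrichment built into the target of the Monsky--Washnitzer realization.

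For the perfectness claim, observe that $R\Gamma_{\rig}(-/S)$ is the composition of the monoidal functor $\mathcal{MW}_S$ with the monoidal de Rham-type realization $\mcX^\dagger\mapsto \underline{\Omega}_{\mcX^\dagger/\mcS^\dagger}$, hence is itself (op-)monoidal into the limit of $\mcD(\mfT_{\Q_p}^\dagger)$ over $\Fr/S$. Monoidal functors preserve dualizable objects, so for $M\in\DA(S)$ dualizable, the image $R\Gamma_{\rig}(M/(\mfT,f,\mfT_{\Q_p}^\dagger))$ is dualizable in $\mcD(\mfT_{\Q_p}^\dagger)$ for every generalized proper frame. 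I would then invoke Andreychev's theorem \cite{andreychev}, which identifies dualizable objects in the derived $\infty$-category of (overconvergent) solid modules over a dagger affinoid with the classical perfect complexes. Applying this frame by frame gives the first assertion.

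For the $F$-isocrystal structure on the cohomology sheaves, two independent pieces of data are already present in the target of the realization. The Frobenius-semilinear equivalence $\varphi^{*}R^i\Gamma_{\rig}\simeq R^i\Gamma_{\rig}$ is produced by the factorization through the $\varphi$-fixed points $(-)^\varphi$; the integrable connection, together with the overconvergent quality, is tautologically built into the dagger de Rham complex $\underline{\Omega}_{-/\mcS^\dagger}$. Passing to cohomology of a perfect complex on each frame preserves all of these structures, yielding the required overconvergent $F$-isocrystal.

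The main obstacle I expect is the compatibility check at the level of cohomology: namely, that the motivic Frobenius supplied by the $(-)^\varphi$ factorization is horizontal for the connection supplied by the dagger de Rham realization. This reduces to a naturality statement, i.e.~the compatibility of $\mathcal{MW}_S$ with Frobenius pullback, but checking it rigorously requires unwinding the construction of $\mathcal{MW}_S$ together with the canonical Frobenius enrichment of motives in positive characteristic recalled earlier. Once this compatibility is in place, the overconvergent $F$-isocrystal structure on each $R^i\Gamma_{\rig}(M/(\mfT,f,\mfT_{\Q_p}^\dagger))$ follows formally from the three ingredients above, without any further finiteness input beyond perfectness.
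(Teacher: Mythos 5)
Your proposal follows essentially the same route as the paper: the monoidal realization sends the dualizable motive to a dualizable object in the category of overconvergent solid modules on each frame, which is then identified with a classical perfect complex via Andreychev's theorem, while the $\varphi$-fixed-point enrichment of the Monsky--Washnitzer functor together with the overconvergent de Rham structure supplies the $F$-isocrystal structure on cohomology. This matches the argument sketched in the paper (and carried out in the cited references), including the compatibility of the Frobenius with the connection that you correctly flag as the point requiring unwinding of the construction of $\mathcal{MW}_S$.
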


\begin{rmk}
    In case $M=\mathsf{M}_S(X)$ for some smooth and proper variety $X/S$,  the  overconvergent sheaves underlying the $R^i\Gamma(M/S)$'s can be compared to Berthelot's rigid cohomology groups. The result above answers positively to a conjecture by Berthelot, which was previously known only in the liftable case, or considering variants of it (see in particular \cite{caro3, shiho_relative3}).
\end{rmk}

\section{Motivic nearby cycles and applications}\label{sec:fromKtok}

\subsection{Analytic motives vs  algebraic motives with monodromy}
In the classical literature (see \cite{illusie}) monodromy operators are certain nilpotent endomorphisms between cohomology groups $H^*(X)$ (e.g. de Rham cohomology, $\ell$-adic cohomology), where $X$ arises as a fiber of a family over a disk, or $X$ is a variety defined over a local field. They come from the action of a  generator of a fundamental group $\pi^1(\Delta^*)\simeq\Z$ or a topological generator of tame inertia groups (isomorphic to $\widehat{\Z}$). If one equips the groups $H^*(X)$ with suitable extra structures (in the de Rham case: a mixed Hodge module structure, in the $\ell$-adic case: the structure of a representation over $\Gal(k)$) then it makes sense to define (positive and negative) twists of them, and it turns out that the monodromy maps are actually morphisms of the form
$$
N\colon H^*(X)\to H^*(X)(-1)
$$
hence their nilpotence (for weight reasons).

We can then define a category of monodromy operators as follows:

\begin{dfn}\label{def:def_DN}
    Let $\catC$ be   stable \icat, and $(-1) : \catC\to\catC$ be an exact functor. We  let $(-n)$ be the iterated functor $(-1)^n$.
    \begin{enumerate}
        \item We let $\catC^{(-1)}_{lax}$ to be the \icat whose objects are pairs $(X,f)$ with $X\in\catC$ and $f\colon X\to X(-1)$. Mapping complexes are defined in the obvsious way, i.e. $$
        \map((X,f),(Y,g)))=\fib(\map(X,Y)\xto{f^*(-1)-g_*}\map(X,Y(-1)).
        $$
        \item We define $\catC_N$ as the
full sub-\icat of $\catC^{(-1)}_{lax}$ spanned by objects $(X, f )$ with ind-nilpotent morphism $f$, that is, such that $$\colim (X\to X(-1)\to X(-2)\to\cdots)=0.$$
    \end{enumerate}
\end{dfn}

The following facts are proved in \cite[Section 2]{bgv}.
\begin{rmk}[{See \cite[Remark 2.20]{bgv}}]
    If $\catC$ is compactly generated, then also   $\catC^{(-1)}_{lax}$ is. Its compact objects are given by pairs $(X,f)$ with $X$ compact, and $f$ nilpotent.%underlies a structure of a symmetric monoidal category, as soon as $\catC$ does, and the functor $(-1)$ is lax-monoidal.
\end{rmk}
\begin{rmk}[{See \cite[Remark 2.23]{bgv}}]
    The category $\catC^{(-1)}_{lax}$ underlies a structure of a symmetric monoidal category, as soon as $\catC$ does and the functor $(-1)$ is lax-monoidal. Informally, one has $(X,f)\otimes(Y,g)=(X\otimes Y, f+g)$.
\end{rmk}
\begin{rmk}\label{rmk:N=0}
    We note that there is a natural adjunction
$$
(N=0)\colon\catC\rightleftarrows \catC_N\colon \fib(N)
$$
which, in good situations, satisfies the hypotheses of a Barr--Beck--Lurie functor, thus identifiying the category $\catC_N$ with a category of modules $\Mod_{\fib(N)(\one)}\catC=\Mod_{\one\oplus(-1)[-1]}(\catC)$. 
\end{rmk}
Let $K$ be a complete non-archimedean field with a perfect residue field $k$. Consider the generic fiber functor
\[\xi\colon \DA(k) \simeq \FDA(\mathcal{O}_K) \xrightarrow{(-)^{\rm rig}} \RigDA(K)\]
where the first equivalence is provided by \Cref{thm:formal-mot-alg-mot} and $(-)^{\rm rig}$ is the functor induced by
sending a formal scheme to its associated rigid space (see \cite[Notation 3.8.14]{agv}). The main theorem of \cite{bgv} states the following. 
\begin{thm}[{\cite[Corollary 4.14]{bgv}}]
\label{thm:DA_N_is_RigDA}\label{thm:bgv}
    Let $K$ be as above and assume that it has a value group of (rational) rank 1. Let  $\RigDA(K)^{\gr}$ be the essential image of the functor
    $\xi\colon \DA(k)\to\RigDA(K).$
    Depending on a choice of a pseudo-uniformizer $\varpi\in K$, the functor above exhibits an equivalence between $\DA(k)_N$ and $\RigDA(K)^{\gr}$. 
\end{thm}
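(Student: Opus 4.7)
The strategy is to realise $\RigDA(K)^{\gr}$ as a category of modules via monadicity, and then to identify this module category with $\DA(k)_N$.

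First, since $\xi\colon\DA(k)\to\RigDA(K)$ is a morphism in $\Prlm$, it admits a lax symmetric monoidal right adjoint $\chi$. Restricting the codomain of $\xi$ to the essential image $\RigDA(K)^{\gr}$ produces an essentially surjective left adjoint, and the plan is to apply the \icategorical Barr--Beck--Lurie monadicity theorem to obtain an equivalence
\[
\RigDA(K)^{\gr} \;\simeq\; \Mod_T(\DA(k)),
\]
where $T = \chi \circ \xi$ is the associated monad. Conservativity of $\chi$ on the essential image will follow once the computation of $T(\one)$ below exhibits the unit $\id \to T$ as a split monomorphism, while the remaining monadicity hypotheses are automatic in the presentable stable context.

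Second, the symmetric monoidality of $\xi$ together with the projection formula identifies $T$ with tensoring against the commutative algebra $A := \chi(\one_K)$ in $\DA(k)$, so that $T(M) \simeq M \otimes A$. The key geometric input is then
\[
A \;\simeq\; \one \oplus \one(-1)[-1]
\]
as an $E_\infty$-algebra with the trivial (square-zero) multiplication. Via \Cref{thm:formal-mot-alg-mot}, $A$ is the cohomological motive of $\Spa K$ regarded over the special fiber $\Spec k$, and the geometric intuition is that this generic fiber behaves motivically like a punctured disc $\G_{m,k}$. To carry out the computation one presents $\Spa K$ as a cofiltered weak limit of finite covers obtained by extracting $n$-th roots of a chosen uniformizer $\varpi$ (whose choice enters at exactly this point), and applies \Cref{spreading}, \Cref{cor:Xinfty=colim} and \Cref{dirfact} to reduce to a concrete calculation. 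The rank-one hypothesis on the value group ensures that this Kummer tower is cofinal in the pro-system of finite quasi-\'etale covers of $\Spa K$, and \Cref{cor:semisep} trivialises the role of Frobenius twists using that $k$ is perfect.

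Third, by \Cref{rmk:N=0}, modules over the square-zero algebra $\one \oplus \one(-1)[-1]$ in $\DA(k)$ are the same data as pairs $(X, N\colon X \to X(-1))$, that is, objects of $\DA(k)^{(-1)}_{\mathrm{lax}}$ in the notation of \Cref{def:def_DN}. The essential image $\RigDA(K)^{\gr}$ is generated under filtered colimits by objects of the form $\xi(X_0)$ with $X_0$ compact in $\DA(k)$, and for such $X_0$ the induced operator $N$ on $\xi(X_0)$ is genuinely nilpotent (a geometric-finiteness statement, compatible with \Cref{cor:tubes_spread}). Passing to ind-completions matches the resulting subcategory with the ind-nilpotent modules of \Cref{def:def_DN}(2), yielding the desired equivalence $\DA(k)_N \simeq \RigDA(K)^{\gr}$. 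The main obstacle in this plan is the explicit computation of $A$ as an $E_\infty$-algebra: identifying both the underlying object $\one \oplus \one(-1)[-1]$ and its square-zero multiplicative structure is the geometric heart of the statement, and it is precisely here that the rank-one hypothesis on $|K^\times|$, the choice of uniformizer $\varpi$, and the full force of the spreading-out and semi-separatedness results of Section 2 must be deployed.
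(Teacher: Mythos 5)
Your overall route is the same as the paper's: apply Barr--Beck--Lurie to $\xi\colon\DA(k)\to\RigDA(K)^{\gr}$ (a left adjoint sending compact generators to compact generators, with the projection formula coming from dualizability of compact objects of $\DA(k)$), identify the resulting monad with $-\otimes\chi(\one)$, compute $\chi(\one)\simeq\one\oplus\one(-1)[-1]$ as a split square-zero extension depending on the choice of $\varpi$ (the paper outsources this computation to \cite{agv}; your Kummer-tower sketch via \Cref{spreading} is a reasonable way in, and this is indeed where the rank-one hypothesis and the uniformizer enter), and finally pass from $\Mod_{\chi\one}(\DA(k))$ to $\DA(k)_N$. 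A cosmetic difference: conservativity of $\chi$ on $\RigDA(K)^{\gr}$ is obtained more directly from the compact-generator statement than from splitness of the unit, but this does not affect the argument.

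The one step you should repair is the final identification. Modules over the split square-zero extension $\one\oplus\one(-1)[-1]$ are \emph{not} the same as arbitrary pairs $(X,N\colon X\to X(-1))$, i.e.\ they are not all of $\DA(k)^{(-1)}_{\mathrm{lax}}$: by \Cref{rmk:N=0} they are exactly the ind-nilpotent pairs, that is $\DA(k)_N$ itself. The nilpotence is forced by the module structure --- it is what makes $\fib(N)$ conservative, which is precisely the Barr--Beck--Lurie hypothesis behind that remark; on the full lax category $\fib(N)$ kills any pair whose operator is invertible, so the lax category cannot be the module category. As written, your step 3 would make the monadic equivalence identify $\RigDA(K)^{\gr}$ with all of $\DA(k)^{(-1)}_{\mathrm{lax}}$, and your subsequent attempt to cut down to nilpotent objects is then both redundant and inconsistent with your own step 1: monadicity already identifies the essential image with the \emph{entire} module category, so there is nothing left to cut down, and no generation/ind-completion argument is needed. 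The fix is simply to invoke \Cref{rmk:N=0} as stated, after which your conclusion agrees with the paper's Step 2.
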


\begin{dfn}
    Notation-wise, we shall write the image in $\DA(k)_N$ of an object $M\in\RigDA(K)^{\gr}$ via the previous equivalence as $(\Psi M, N_M)$.
\end{dfn}

\begin{proof}
We can highlight the main points of the proof as follows.

{\it Step 1:} We note that the functor $\xi\colon  \DA(k)\to \RigDA(K)^{\gr}$ satisfies the hypotheses of the Barr--Beck--Lurie theorem. Indeed, it is a left adjoint which sends a set of compact generators to a set of compact generators (more or less by definition), which implies that it preserves colimits and its right adjoint preserves filtered colimits. The  projection formula $\chi(\xi M\otimes N)\simeq M\otimes \chi N$ follows formally from the fact that all compact objects in $\DA(k)$ are dualizable.

{\it Step 2:} Because of Step 1, we deduce that $\xi$ exhibits a a monoidal equivalence $\RigDA(K)^{\gr}\simeq \Mod_{\chi\one}(\DA(k))$. Thanks to the computations of \cite{agv}, we have a concrete description of the algebra $\chi\one$ as the split square-zero extension $\one\oplus(-1)[-1]$. (This identification depends on a choice of a pseudo-uniformizer.) In particular, we can alternatively identify $\Mod_{\chi\one}(\DA(k))$  with $\catC_N$, as shown above. 
\end{proof}

\begin{rmk}\label{rmk:identifadj}
    We note that under the equivalence $\DA(k)_N\simeq \RigDA(K)^{\gr}$ of \Cref{thm:DA_N_is_RigDA}, the adjunction
    $$
\xi\colon    \DA(k)\rightleftarrows\RigDA(K)^{\gr}\colon \chi 
    $$
    is identified with the adjunction of \Cref{rmk:N=0}:
        $$
(N=0)\colon    \DA(k)\rightleftarrows\DA_N(K)\colon \fib (N). 
    $$
\end{rmk}
\begin{rmk}
    Step 1 of the proof above can be generalized over a base, giving rise to an equivalence between the essential image of $\xi\colon\FDA(\mfS)\to\RigDA(\mfS_\eta)$ and $\Mod_{\chi\one}(\DA(\mfS_\sigma)$. In this setting, the projection formula is not automatic, and is the core result of \cite[Section 3.6]{agv}.
\end{rmk}

The previous theorem is only mildly useful if one doesn't have a good handle of the categories $\RigDA(K)^{\gr}$. Fortunately, de Jong's alteration of singularities and some explicit homotopies allow one to conclude the following.
\begin{prop}{\cite[Remarks 4.6, 4.7]{bgv}}\label{prop:semiss_is_gr}
    Let $K$ be a non-archimedean field.
    \begin{enumerate}
    \item If $X/K$ is a quasi-compact smooth rigid variety having a semi-stable formal model (more generally, a pluri-nodal model in the sense of \cite[Definition 1.1]{berk-contr}) then $\mathsf{M}(X)\in\RigDA(K)^{\gr}$.
        \item If $K$ is algebraically closed, then $\RigDA(K)=\RigDA(K)^{\gr}.$ In particular, for any $M\in\RigDA(K)$ which is compact, there exists a finite extension $K'/K$ such that $M_{K'}\in\RigDA(K')^{\rm gr}$.
    \end{enumerate}
\end{prop}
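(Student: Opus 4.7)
By rig-\'etale descent (compatible with $\xi$) one reduces to the case of a standard local chart $\mathfrak X_{\mathrm{std}}=\Spf(\mcO_K\langle t_1,\dots,t_n\rangle/(t_1\cdots t_r-\varpi))$ of a pluri-nodal formal model. Although $\mathfrak X_{\mathrm{std}}$ is not $\mcO_K$-smooth, it is still a formal $\mcO_K$-scheme whose formal motive is well-defined in $\FDA(\mcO_K)$ via the cohomological formalism $p_\ast p^\ast\one$; the equivalence $\FDA(\mcO_K)\simeq\DA(k)$ of \Cref{thm:formal-mot-alg-mot} together with the definition $\xi=(-)^{\rig}\circ(\sigma^\ast)^{-1}$ then shows that the corresponding cohomological motive of $X_{\mathrm{std}}=\mathfrak X_{\mathrm{std}}^{\rig}$ lies in $\RigDA(K)^{\gr}$. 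The ``explicit homotopies'' mentioned in the excerpt are the $\mathbb B^1$-homotopies needed to interchange the cohomological and homological motive of $X_{\mathrm{std}}$ (the point being that $X_{\mathrm{std}}$ is $K$-smooth, so the two differ only by a twist/shift encoded by such homotopies) and to patch the standard charts coherently through the descent. Finally, one uses that the essential image $\RigDA(K)^{\gr}\subseteq\RigDA(K)$ is closed under colimits: indeed, \Cref{thm:bgv} presents it as $\Mod_{\chi\one}(\DA(k))$ via Barr--Beck, and the inclusion $\iota\colon\RigDA(K)^{\gr}\hookrightarrow\RigDA(K)$ is identified with a colimit-preserving fully faithful functor because $\xi=\iota\circ(\text{free module})$ preserves colimits.

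\textbf{Part (2) and the ``in particular''.} Assume $K$ algebraically closed. Since $\RigDA(K)$ is compactly generated by motives of qcqs smooth rigid varieties, it suffices to show $\mathsf M(Y)\in\RigDA(K)^{\gr}$ for each such $Y$. By the rigid-analytic version of de Jong's alteration theorem, there exists a proper, surjective, generically finite morphism $\pi\colon Y'\to Y$ with $Y'$ admitting a semi-stable formal model; Part (1) then gives $\mathsf M(Y')\in\RigDA(K)^{\gr}$. Arguing as in the proof of \Cref{dirfact}, the equivalence $\RigDA(K)\simeq\RigDM(K)$ and the use of $\Q$-coefficients produce a transpose correspondence $\pi^T$ with $\pi\circ\pi^T=\deg(\pi)\cdot\id$, exhibiting $\mathsf M(Y)$ as a direct summand of $\mathsf M(Y')$ and hence as an element of $\RigDA(K)^{\gr}$. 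For the second claim: given a compact $M\in\RigDA(K)$ over a general $K$, write $\widehat{K^{\mathrm{alg}}}$ for the completion of an algebraic closure. The just-proved equality yields $N\in\DA(\bar k)$ (compact, since $M_{\widehat{K^{\mathrm{alg}}}}$ is) together with an equivalence $\xi_{\widehat{K^{\mathrm{alg}}}}(N)\simeq M_{\widehat{K^{\mathrm{alg}}}}$. By the spreading out theorem \Cref{spreading} (and its algebraic analogue for $\DA$), together with the compatibility of $\xi$ with base change, both the compact object $N$ and this equivalence descend to a finite extension $K'/K$, giving $M_{K'}\in\RigDA(K')^{\gr}$.

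\textbf{Main obstacle.} The technical heart is Part (1): making sense of the non-smooth pluri-nodal chart $\mathfrak X_{\mathrm{std}}$ as an object of $\FDA(\mcO_K)$ whose image under $\xi$ is precisely the \emph{homological} motive $\mathsf M(X_{\mathrm{std}})$, rather than merely its cohomological version. The explicit $\mathbb B^1$-homotopies alluded to in the excerpt provide exactly this reconciliation, typically by realizing $X_{\mathrm{std}}$ as an iterated relative ball/annulus fibration over simpler smooth pieces. The descent step, although conceptually clean, also requires careful book-keeping of compatibilities between the \'etale topology on formal schemes and the rig-\'etale topology on their generic fibers.
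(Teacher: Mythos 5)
The paper does not reprove this statement (it is quoted from \cite[Remarks 4.6, 4.7]{bgv}, with the hint that the proof uses ``de Jong's alteration of singularities and some explicit homotopies''), so your proposal has to stand on its own — and Part (1) as written does not. Your reduction to standard charts and the closure of $\RigDA(K)^{\gr}\simeq\Mod_{\chi\one}(\DA(k))$ under colimits are fine, but the mechanism you offer for the chart itself is false: the generic-fiber functor $\xi$ is a \emph{left} adjoint and does not commute with pushforward along a non-smooth (even proper) formal scheme, so $\xi$ applied to the ``formal cohomological motive'' of $\mathfrak{X}_{\mathrm{std}}$ is \emph{not} $\mathsf{M}^{\rm coh}(X_{\mathrm{std}})$. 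The failure of exactly this commutation is what the nearby-cycles functor $\chi$ measures: for the Tate curve, $\xi$ of the cohomological motive of the nodal special fiber has realizations of total rank $3$, while the Tate curve has total rank $4$ (recall that under \Cref{thm:bgv} and \Cref{rmk:identifadj}, $\xi$ corresponds to ``equip with $N=0$'' and preserves the ranks of realizations). Your fallback — that for the $K$-smooth chart the homological and cohomological motives ``differ only by a twist/shift'' — also fails, since that is Poincar\'e duality and requires properness, which the chart does not have. The actual content of Part (1), namely the explicit $\B^1$-homotopy computation showing that the generic fiber of $\Spf(\mcO_K\langle t_1,\dots,t_n\rangle/(t_1\cdots t_r-\varpi))$ (a poly-annulus fibration) has motive built from Tate twists of objects visibly in the image of $\xi$, is only gestured at in your ``main obstacle'' paragraph and never carried out.

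Part (2) has the right shape (alteration, trace with $\Q$-coefficients, generation by compact generators, spreading out for the ``in particular'' — the last point indeed matches what the paper indicates in the remark following the proposition), but two steps are glossed over. First, \Cref{dirfact} concerns \emph{finite} surjective maps; an alteration is only proper, surjective and generically finite, so the identity $\pi\circ\pi^{T}=\deg(\pi)\cdot\id$ does not come for free from transfers: you either need a duality/degree argument (a trace for the proper map $\pi$ between smooth quasi-compact rigid spaces, with the composite checked to be $\deg(\pi)$ on a dense open) or the standard induction on dimension using localization triangles. Second, ``the rigid-analytic version of de Jong's alteration theorem'' is not an off-the-shelf statement in the form you use; one has to pass through formal models and algebraic de Jong, or invoke the local structure results of Berkovich/Temkin/Hartl giving pluri-nodal (semi-stable) models after suitable covers, which is precisely why the statement is phrased in terms of pluri-nodal models and why $K$ algebraically closed (or a finite extension) is needed.
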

\begin{rmk}
    Passing to finite extensions of the base field in order to have an alteration from a variety admitting a  semi-stable formal model is a standard technique which is used to bootstrap results from the semi-stable case to the general case. Similarly, we will focus on the categories $\RigDA(K)^{\gr}$, as any compact motive $M\in \RigDA(K)$ lies in them, up to some finite field extension (use the previous proposition and the spreading-out).
\end{rmk}

\subsection{Revisiting the tilting equivalence}
\Cref{{thm:DA_N_is_RigDA}} has the following straightforward corollary.
\begin{cor}\label{cor:tilting}
    Let $K$ be a perfectoid field with rational rank $\Q$. Then there is an equivalence
    $$
    \RigDA(K)\simeq\RigDA(K^\flat)
    $$
\end{cor}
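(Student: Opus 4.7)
The strategy is to apply Theorem \ref{thm:bgv} in parallel to $K$ and to its tilt $K^\flat$, and to exploit the fact that both equivalences target the same category $\DA(k)_N$, where $k$ is the common residue field. I would begin by recording the standard consequences of the perfectoid hypothesis: $K^\flat$ is a perfectoid non-archimedean field of characteristic $p$, it has the same residue field $k$ and the same value group as $K$, and a choice of pseudo-uniformizer $\varpi^\flat \in K^\flat$ determines, via the sharp map, a compatible pseudo-uniformizer $\varpi = (\varpi^\flat)^\# \in K$. In particular, both $K$ and $K^\flat$ satisfy the hypotheses of Theorem \ref{thm:bgv}.

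With these compatible choices, Theorem \ref{thm:bgv} furnishes equivalences
\[\RigDA(K)^{\gr} \;\simeq\; \DA(k)_N \;\simeq\; \RigDA(K^\flat)^{\gr},\]
whose composition is the desired equivalence on essential images. The compatibility relies on the explicit description in Step 2 of the proof of Theorem \ref{thm:bgv}: both essential images identify with $\Mod_{\one \oplus (-1)[-1]}(\DA(k))$, and this split square-zero algebra depends only on $k$ and on the pseudo-uniformizer, data that we have arranged to match on the two sides.

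The main obstacle is to promote this equivalence between the ``graded'' subcategories to an equivalence between the full motivic categories $\RigDA(K)$ and $\RigDA(K^\flat)$. Concretely, one must establish that $\RigDA(K) = \RigDA(K)^{\gr}$ and likewise for the tilt, which is not immediate from Proposition \ref{prop:semiss_is_gr}(2) since $K$ need not be algebraically closed. To address this, I would pass to the completed algebraic closure $C = \widehat{\overline{K}}$, whose tilt is $C^\flat = \widehat{\overline{K^\flat}}$: since $C$ and $C^\flat$ are algebraically closed perfectoid fields, Proposition \ref{prop:semiss_is_gr}(2) applies and gives $\RigDA(C) = \RigDA(C)^{\gr}$. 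Combined with the spreading-out Theorem \ref{spreading} and Proposition \ref{prop:semiss_is_gr}(1), any compact $M \in \RigDA(K)$ satisfies $M_L \in \RigDA(L)^{\gr}$ for some finite extension $L/K$. Descending this back to $K$ is the delicate step, which I expect to require either a careful étale-descent argument for the functor $\xi$ along $L/K$, or an appeal to a pluri-nodal reduction theorem that works directly over perfectoid bases of value group $\Q$; the tilting correspondence for finite extensions would then allow the descent data on both sides to be matched simultaneously.
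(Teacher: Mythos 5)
Your first step—running \Cref{thm:bgv} in parallel over $K$ and $K^\flat$ with compatible pseudo-uniformizers to get $\RigDA(K)^{\gr}\simeq\DA(k)_N\simeq\RigDA(K^\flat)^{\gr}$—is exactly the paper's argument in the algebraically closed case, where \Cref{prop:semiss_is_gr}(2) gives $\RigDA(K)=\RigDA(K)^{\gr}$ and $\RigDA(K^\flat)=\RigDA(K^\flat)^{\gr}$, so the composite equivalence is already the full statement. The gap is in the general case: you correctly identify that one cannot invoke \Cref{prop:semiss_is_gr}(2) when $K$ is not algebraically closed, but the step you then need (``descending this back to $K$ is the delicate step, which I expect to require either\dots or\dots'') is left as an expectation rather than proved. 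Moreover, the intermediate statement you aim for, namely $\RigDA(K)=\RigDA(K)^{\gr}$ for an arbitrary perfectoid field $K$ of rational rank one, is stronger than what is needed and is not supplied by anything in your sketch: \Cref{prop:semiss_is_gr} and the spreading-out theorem only show that a \emph{compact} motive over $K$ becomes graded after some finite extension, and neither a generic appeal to \'etale descent for $\xi$ nor a pluri-nodal reduction statement over perfectoid bases is formulated precisely enough to close this.

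The paper avoids this issue entirely by never comparing $\RigDA(K)$ with $\RigDA(K)^{\gr}$ for non-algebraically-closed $K$. Instead, it first obtains the equivalence $\RigDA(C)\simeq\DA_N(\bar k)\simeq\RigDA(C^\flat)$ over the completed algebraic closure $C$ of $K$ (whose tilt is the completed algebraic closure of $K^\flat$), and then observes that this equivalence is equivariant for the identification $\Gal(K)\simeq\Gal(K^\flat)$ coming from the tilting correspondence; the statement over $K$ then follows by descending along this Galois action (using, e.g., \Cref{spreading} to write $\RigDA(C)$ as the colimit of the $\RigDA(L)$ over finite extensions $L/K$, and matching finite extensions of $K$ with those of $K^\flat$). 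If you want to complete your proposal, the missing idea is precisely this: replace the attempt to prove $\RigDA(K)=\RigDA(K)^{\gr}$ by an equivariance-plus-descent argument from the algebraically closed case, carried out simultaneously on both sides of the tilt.
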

\begin{proof}
    If $K$ is algebraically closed, then both categories are equivalent to $\DA_N(k)$ where $k$ is their common residue field. For the general case, observe that this equivalence is equivariant with the action of use $\Gal(K)\simeq\Gal(K^\flat)$.
\end{proof}

The result above is a special case of an equivalence which holds in much greater generality, and whose proof is based on the spreading out theorem, and some explicit computations of mapping spaces:
\begin{thm}\label{thm:tilting}\label{tilt}
    Let $P$ be a perfectoid space over $K$. There is an equivalence $\RigDA(P)\simeq\RigDA(P^\flat)$ which is compatible with pull-back functors. In particular, under the equivalence $\RigDA(K)\simeq \RigDA(K^\flat)$, the cohomological motive $\mathsf{M}^{\rm coh}(P)$ of a perfectoid space $P/K$ corresponds  to $\mathsf{M}^{\rm coh}(P^\flat)$.
\end{thm}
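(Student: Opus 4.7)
The plan is to reduce via étale descent to the case where $P=\Spa(R,R^+)$ is an affinoid perfectoid space, and then to present both $\RigDA(P)$ and $\RigDA(P^\flat)$ as cofiltered colimits of motivic categories of ordinary rigid varieties via the spreading-out theorem \Cref{spreading}, matching the two colimits afterwards via the formal-algebraic comparison \Cref{thm:formal-mot-alg-mot} combined with the semi-separatedness property \Cref{thm:semi-separat}. The initial reduction uses the étale hyperdescent of $\RigDA(-)$ together with Scholze's tilting equivalence of small étale sites of perfectoid spaces, and the requirement that the produced equivalence be natural in open immersions.

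For the affinoid case, the perfectoid structure on $R$ provides a Frobenius-compatible cofiltered system of affinoid smooth rigid varieties $(X_n)_n$ over $K$ with $P\sim\varprojlim_n X_n$ as adic spaces, and correspondingly a system $(Y_n)_n$ over $K^\flat$ with $P^\flat\sim\varprojlim_n Y_n$ and transition maps given by relative Frobenius. \Cref{spreading} then yields
\[ \RigDA(P)\simeq \varinjlim_n \RigDA(X_n), \qquad \RigDA(P^\flat)\simeq \varinjlim_n \RigDA(Y_n). \]
To identify these two colimits I would pass through formal models: each $X_n$ admits a model $\mathfrak X_n$ over $\mcO_K$ (and similarly $Y_n=\mathfrak Y_n^{\rig}$ over $\mcO_{K^\flat}$), so \Cref{thm:formal-mot-alg-mot} reduces each finite level to $\DA$ of the special fiber. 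By the perfectoid-tilting formalism one may arrange the special fibers $\mathfrak X_{n,\sigma}$ and $\mathfrak Y_{n,\sigma}$ to agree over the common residue field $k$ up to successive relative Frobenius morphisms; since Frobenius is a universal homeomorphism in characteristic $p$, \Cref{thm:semi-separat} turns these into motivic equivalences, so the two pro-systems produce canonically the same colimit. Compatibility with pullbacks is preserved throughout the construction, and the identification of cohomological motives follows by applying the right adjoint $\pi_*$ to $\one$: the structural morphism $\pi\colon P\to\Spa K$ tilts to $\pi^\flat\colon P^\flat\to\Spa K^\flat$, so under the equivalence $\RigDA(K)\simeq\RigDA(K^\flat)$ of \Cref{cor:tilting} one has $\mathsf{M}^{\rm coh}(P)=\pi_*\one \leftrightarrow \pi^\flat_*\one=\mathsf{M}^{\rm coh}(P^\flat)$.

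The main obstacle I anticipate is the precise matching of the two towers of formal integral models in a genuinely functorial and monoidal way. Tilting is an equivalence of Huber pairs $(R,R^+)\leftrightarrow(R^\flat,R^{\flat+})$, but it is not strictly compatible with a naive choice of finite-level integral models, and one must cofinally refine the towers and invoke explicit Witt-vector level computations to pin down the comparison on special fibers. This is precisely where the ``explicit computations of mapping spaces'' alluded to in the text enter, and ensuring the $\otimes$-compatibility of the resulting equivalence is the core technical step underlying the argument in \cite{vezz-fw}.
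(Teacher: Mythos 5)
Your reduction of the affinoid case has a genuine gap at its central step. After writing $\RigDA(P)\simeq\varinjlim_n\RigDA(X_n)$ and $\RigDA(P^\flat)\simeq\varinjlim_n\RigDA(Y_n)$ via \Cref{spreading}, you propose to identify the two colimits by choosing formal models $\mfX_n/\mcO_K$, $\mathfrak{Y}_n/\mcO_{K^\flat}$ and invoking \Cref{thm:formal-mot-alg-mot} to ``reduce each finite level to $\DA$ of the special fiber''. But \Cref{thm:formal-mot-alg-mot} identifies $\FDA(\mfX_n)$ with $\DA(\mfX_{n,\sigma})$; it says nothing about $\RigDA(X_n)$. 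The generic-fiber functor $\FDA(\mfX_n)\to\RigDA(\mfX_n^{\rig})=\RigDA(X_n)$ is very far from an equivalence --- for $\mfX=\Spf\mcO_K$ this is exactly the functor $\xi\colon\DA(k)\to\RigDA(K)$, whose essential image is the proper subcategory $\RigDA(K)^{\gr}$ and which only becomes an equivalence onto modules over $\chi\one$, i.e.\ after adding the monodromy datum of \Cref{thm:bgv}. So the finite levels of your tower cannot be traded for motives over the special fibers, and the subsequent appeal to \Cref{thm:semi-separat} does not apply where it is needed: on the untilted side the transition maps $X_{n+1}\to X_n$ are characteristic-zero finite maps lifting Frobenius (e.g.\ $p$-power maps on tori), not universal homeomorphisms, so nothing forces $\varinjlim_n\RigDA(X_n)$ to collapse or to match the tilted colimit. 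Making the relative Frobenius act invertibly on motives in this setting is precisely the hard analytic input of the cited result \cite[Theorem 5.13]{lbv} (going back to \cite{vezz-fw}), proved via an intermediate category of (semi-)perfectoid motives and explicit $\B^1$-homotopies, not via semi-separatedness of special fibers; your closing paragraph correctly senses that explicit computations are needed, but the skeleton you hang them on is the wrong one.

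For comparison, the paper does not reprove the equivalence at all: it quotes \cite[Theorem 5.13]{lbv} for the first assertion, and the only argument it supplies is for the second one, namely that an equivalence compatible with all pull-backs $f^*$ is automatically compatible with the push-forwards $f_*$, since right adjoints of canonically identified functors are canonically identified; applying this to the structural morphisms $\pi$ and $\pi^\flat$ gives $\mathsf{M}^{\rm coh}(P)=\pi_*\one\leftrightarrow\pi^\flat_*\one=\mathsf{M}^{\rm coh}(P^\flat)$ under \Cref{cor:tilting}. Your final step is essentially this argument, but you should state the adjoint-passage explicitly rather than asserting that ``applying $\pi_*$ to $\one$'' transports across the equivalence: that transport is exactly what has to be justified, and the justification is the compatibility with $\pi^*$ plus uniqueness of right adjoints.
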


\begin{proof}The first part is \cite[Theorem 5.13]{lbv}. For the last point, it suffices to point out that the identification $\RigDA(P)\simeq\RigDA(P^\flat)$ is also compatible with the   functors $f_*$, as it is compatible with their left adjoint functors $f^*$.% by \cite{lbv}.
\end{proof}
We note that there is no analogue of the previous result for \emph{homological} motives: if $f\colon P\to K$ is perfectoid, there is no functor $f_!$ as $f$ is not locally of topologically finite type, in general.
\begin{rmk}
The equivalence of \Cref{cor:tilting} is    a special case of the one of \Cref{thm:tilting}. In order to see this, it suffices to show that the tilting equivalence is compatible with the functors $\DA(k)\to\RigDA(K)$ and $\DA(k)\to\RigDA(K^\flat)$, as shown in \cite{vezz-tilt4rig}.
\end{rmk}
%\subsection{log motives vs analytic motives} 
\begin{rmk}
In this remark, we review one of the main results of \cite{pWM} in which the tilting equivalence is shown to be compatible with the theory of log-motives on the log point. 

Let $C$ be the completion of an algebraic
closure of a discrete valuation field,  let $\mathcal{O}_C$ be its valuation ring and $k$ its residue field. We can canonically equip $\Spf(\mathcal{O}_C)$ with a logarithmic structure, denoted $\mathcal{O}_C^\times$, by considering the inclusion of the submonoid of non-zero elements $\mathcal{O}_C - \{0\} \subset \mathcal{O}_C$. The residue field $k$ also acquires a logarithmic structure as the so-called log point $k^0 = (k, k^* \oplus_{\mathcal{O}_C^*} \mathcal{O}_C-\{0\})$. Note that this is simply the induced log structure for the embedding $\Spec(k) \to \Spf(\mathcal{O}_C)$: in terms of maps of monoids, it is the log structure associated to the pre-log structure $\mathbb{Q}_{\geq0}\to k, 0\neq m\mapsto 0$, where $\mathbb{Q}_{\geq0}$ is the valuation group of $C$. 

We can consider the category $\Sm^{\rm ss}_{k^0}$ of \emph{semistable}   log schemes $X=(\underline{X}, \mathsf{M}_X)$ over $k^0$: it is the full subcategory of all log smooth log schemes over $k^0$ consisting of those $X$ such that, Zariski locally on $\underline{X}$, there exist a strict étale map $X \to \Spec(k[X_0,\ldots, X_n]/(X_1\cdots X_r))$, equipped with the standard log structure induced by $\mathbb{N}^r \to k[X_0,\ldots, X_n]/(X_1\cdots X_r)$, $e_i \mapsto X_i$, for some $1\leq r\leq n$. See \cite[Definition 2.11(1), (2)]{pWM} for more details. 

Write $\mathrm{log}\DA^{\rm ss}(k^0)$ for the category of semistable log motives over $k^0$: this is the full subcategory of $\log\DA(k^0)$ spanned by motives of semistable log varieties over $k^0$. The equivalence of \Cref{thm:nil-invariance_log} restricts in particular to an equivalence 
\[ \mathrm{log}\DA^{\rm ss}(k^0) \simeq \mathrm{log}\FDA^{\rm ss}(\mathcal{O}_C^\times),\]
where the right-hand side is defined in the obvious way. By construction, the log structures on semistable formal schemes over $\mathcal{O}_C$ are in particular \emph{vertical}, i.e., the sheaf of monoids $\mathsf{M}_\mathfrak{X}$ on a semistable formal scheme $\mathfrak{X} \to \Spf(\mathcal{O}_C)$ becomes a sheaf of groups after inverting a pseudo-uniformizer $\varpi$. In particular there is a well-defined functor 
 \[ \Sm^{\rm ss}_{\mathcal{O}_C^\times} \to  \RigSm_C, \quad \mathfrak{X} = (\underline{\mathfrak{X}}, \mathsf{M}_{\mathfrak{X}})\mapsto \underline{\mathfrak{X}}^{\rig} \]
where $\underline{\mathfrak{X}}^{\rig}$ is the rigid generic fiber. This passes to the category of motives, providing a functor 
\[ \mathcal{MW}_k^{\rm log} \colon \mathrm{log}\DA^{\rm ss}(k^0) \simeq \mathrm{log}\FDA^{\rm ss}(\mathcal{O}_C^\times) \longrightarrow \RigDA(C)\]
that we can call the  log Monsky–Washnitzer functor.  This is compatible with the tilting equivalence, in the sense that if  $C$ is  perfectoid field with tilt $C^\flat$, then there is a commutative diagram
\[
\begin{tikzcd}
& \mathrm{log}\FDA^{\rm ss}(\mathcal{O}_C^\times) \arrow[r] & \RigDA(C)\arrow[dd, dash, "\simeq"]\\ 
  \mathrm{log}\DA^{\rm ss}(k^0) \arrow[rd, dash, "\simeq"] \arrow[ru, dash, "\simeq"'] &   &\\
  & \mathrm{log}\FDA^{\rm ss}(\mathcal{O}_{C^\flat}^\times)\arrow[r] & \RigDA(C^\flat)
\end{tikzcd}
\]
see \cite[Proposition 2.31]{pWM}. In fact, the horizontal arrows in the previous diagram are actually localizations with respect to the rig-étale topology. 
\end{rmk}

\begin{rmk}This can be made even more explicit:  one can consider the category of log motives $\mathrm{log}\DA^{\rm ss}_{\rm div}(k^0)$ for the \emph{dividing} étale topology \cite[Section 3.3]{mot_log}, i.e., the topology generated by strict étale maps together with universally surjective proper log étale monomorphisms (informally, locally given by blow-ups associated to subdivisions of fans). Then by \cite[Theorem 3.3]{ParkPsi} and \Cref{thm:DA_N_is_RigDA}, there are equivalences \[\mathrm{log}\DA^{\rm ss}_{\rm div}(k^0) \simeq \DA(k)_N \simeq \RigDA(C)\]
This allows one to  consider log motives as algebraic avatars of analytic motives. In equi-characteristic zero, Park further shows that the motivic nearby cycle functor $\Psi$ can be computed via the functor $\Psi^{\rm log}$. See \cite[Definition 2.2]{ParkPsi}.
\end{rmk}
\subsection{A quick detection principle}
As another corollary of \Cref{thm:DA_N_is_RigDA}, we point out that one can immediately extend (monoidal) functors $F\colon \DA(k)\to\catD^\otimes$ to functors from $\RigDA(K)^{\gr}$. More precisely
let $F\colon\DA(k)\to \catD^\otimes$ be a  colimit-preserving monoidal functor between monoidal stable presentable  \icats. Let $u=F(\one(-1))\in\catD$, and write $\catD_N$ for the category of \Cref{def:def_DN}-(2) with respect to the exact endofunctor $(-1):=-\otimes u$.  The choice of a pseudo-uniformizer~$\varpi\in\mcO_K$ gives rise to a monoidal extension
				\begin{equation}\label{eq:def_Fhat}
				\widehat{F}\colon\RigDA(K)^{\rm gr}\to \catD_N
				\end{equation}
				endowed with equivalences $\widehat{F}\circ\xi\simeq (N=0)\circ F$ and $\pi\circ\widehat{F}\simeq F\circ \Psi$. Informally, it sends the motive $M$ to $(F(\Psi M), F(N_{\Psi M}))$.
                
                We can be even more precise than that, by classifying these possible extensions explicitly.

\begin{prop}
    Let $F\colon \DA(k)\to\catD^\otimes$ be a  functor of monoidal presentable categories. There is an equivalence
    $$
    \Map_{\DA(k)^\otimes/}(\RigDA^{\gr}(K)^\otimes,\catD^\otimes)\simeq \Map_{\catD}(F(\one(-1)[-1]),\one).
    $$
    In particular, in case $\catD=\mcD(\mcA)$ for some monoidal abelian category $\mcA$, there is an equivalence between $\pi_0\Map_{\DA(k)^\otimes/}(\RigDA^{\gr}(K)^\otimes,\catD^\otimes)$ and $\Ext^1(F(\one(-1)),\one)$ under which the functor $F\circ \Psi$ corresponds to $0$.
\end{prop}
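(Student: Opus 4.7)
The plan is to reduce the classification of monoidal extensions to an augmentation-space computation via the monadic description of $\RigDA^{\gr}(K)$, and then to exploit the square-zero structure of $\chi\one$. By the proof of \Cref{thm:bgv} (in particular its Step 2), there is a monoidal equivalence $\RigDA^{\gr}(K) \simeq \Mod_{\chi\one}(\DA(k))$ in $\CAlg(\Prl)_{\DA(k)/}$ under which $\xi$ corresponds to the free-module functor $M \mapsto \chi\one \otimes M$; moreover, the choice of pseudo-uniformizer $\varpi$ identifies the commutative algebra $\chi\one$ with the trivial square-zero extension $\one \oplus \one(-1)[-1]$.

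By the universal property of modules over an $E_\infty$-algebra in $\CAlg(\Prl)_{\DA(k)/}$, the space of monoidal extensions of $F$ is naturally equivalent to the space of $E_\infty$-algebra augmentations
\[
\Map_{\DA(k)^\otimes/}\bigl(\RigDA^{\gr}(K)^\otimes, \catD^\otimes\bigr) \simeq \Map_{\CAlg(\catD)}\bigl(F(\chi\one), \one\bigr).
\]
Since $F$ is symmetric monoidal, $F(\chi\one) \simeq \one \oplus u[-1]$ remains a trivial square-zero $E_\infty$-algebra in $\catD$, where $u = F(\one(-1))$.

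The key remaining step is to identify the augmentation space of a trivial square-zero extension $\one \oplus N \in \CAlg(\catD)$ with $\Map_\catD(N, \one)$. This follows from the cotangent-complex computation of derivations: for such an extension one has $L_{(\one \oplus N)/\one} \otimes_{\one \oplus N} \one \simeq N$, and augmentations correspond to derivations, hence to maps $N \to \one$ in $\catD$. Applied with $N = u[-1]$, this yields the advertised equivalence. The second assertion then follows by unwinding: the canonical projection $F(\chi\one) \to \one$ corresponds to the extension $F \circ \Psi$ (since, under the equivalence $\RigDA^{\gr}(K) \simeq \Mod_{\chi\one}(\DA(k))$, the functor $\Psi$ is precisely the base-change $(-) \otimes_{\chi\one} \one$ along this projection), and its image in $\Map_\catD(u[-1], \one)$ is the zero map. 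In the case $\catD = \mcD(\mcA)$ the identification $\pi_0 \Map_\catD(u[-1], \one) \simeq \Ext^1_\mcA(u, \one)$ is the standard computation of $\Hom$ in a derived category. The main subtlety is the derivation--augmentation identification for trivial square-zero extensions, which requires care in the stable $\infty$-categorical setting: strict DGA intuition might suggest that only the canonical projection exists as augmentation, whereas derived deformations (classified precisely by the cotangent complex) contribute the nontrivial classes in $\Ext^1$.
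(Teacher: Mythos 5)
Your overall route is the same as the paper's: identify $\RigDA^{\gr}(K)\simeq \Mod_{\chi\one}(\DA(k))$ with $\chi\one\simeq\one\oplus\one(-1)[-1]$, invoke the universal property of module categories in $\CAlg(\Prl)_{\DA(k)/}$ to reduce to a space of algebra maps (you phrase it as augmentations $F(\chi\one)\to\one$ in $\catD$; the paper equivalently uses $\Map_{\CAlg(\DA(k))}(\chi\one,G\one)$ for $G$ the right adjoint), and identify $\Psi$ with base change along the canonical augmentation. The gap is in your key step: the claim that for a trivial square-zero extension $\one\oplus N$ one has $L_{(\one\oplus N)/\one}\otimes_{\one\oplus N}\one\simeq N$ and that ``augmentations correspond to derivations'' is false in general. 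The square-zero functor $N\mapsto\one\oplus N$ is a \emph{right} adjoint, so the cotangent complex controls maps \emph{into} $\one\oplus N$, not out of it; and the cotangent complex of a trivial square-zero extension on an unshifted module is unbounded. Concretely, for $\catD=\mcD(\Q)$ and $N=\Q$ in degree $0$ one has $\pi_0\Map_{\CAlg}(\Q\oplus\Q,\Q)=\ast$ while $\pi_0\Map_{\mcD(\Q)}(N,\one)=\Q$, so the asserted equivalence $\Map_{\CAlg(\catD)}(\one\oplus N,\one)\simeq\Map_\catD(N,\one)$ fails for general $N$; your closing remark about ``derived deformations'' does not repair this, since derivations only describe the deformations of a \emph{fixed} augmentation.

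What makes the statement true here is a special feature of $N=\one(-1)[-1]$, not a general property of split square-zero extensions: in the $\Q$-linear category $\DA(k)$ the swap acts by $-1$ on $(\one(-1)[-1])^{\otimes 2}$ (trivial symmetry of the Tate twist times the sign from the shift), so $\Sym^{n}(\one(-1)[-1])=0$ for $n\geq 2$ and hence $\one\oplus\one(-1)[-1]\simeq\Sym(\one(-1)[-1])$ is the \emph{free} commutative algebra; applying the monoidal colimit-preserving $F$ gives $F(\chi\one)\simeq\Sym_{\catD}(u[-1])$ with $u=F(\one(-1))$. The free--forgetful adjunction then yields $\Map_{\CAlg(\catD)}(F(\chi\one),\one)\simeq\Map_{\catD}(u[-1],\one)$, which is exactly the step the paper performs (in the adjoint form $\Map_{\CAlg(\DA(k))}(\one\oplus\one(-1)[-1],G\one)\simeq\Map_{\DA(k)}(\one(-1)[-1],G\one)$, citing the proof of \cite[Proposition 4.52]{bgv}). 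With that substitution your argument closes; the identification of $F\circ\Psi$ with the class of the zero map, and of $\pi_0$ with $\Ext^1(F(\one(-1)),\one)$ when $\catD=\mcD(\mcA)$, is fine as you state it.
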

\begin{proof}For the first claim, we let $G$ be the right adjoint to $F$ and we note the following identifications (using \Cref{thm:bgv} and the Barr--Beck--Lurie theorem \cite[Proposition 5.29]{MNN}, see the proof of \cite[Proposition 4.52]{bgv})
$$
\begin{aligned}
    \Map_{\DA(k)^\otimes/}(\RigDA^{\gr}(K)^\otimes,\catD^\otimes)&\simeq  
 \Map_{\DA(k)^\otimes/}(\Mod_{\one\oplus\one(-1)[-1]}\DA(k)^\otimes,\catD^\otimes)\\&\simeq
 \Map_{\CAlg(\DA(k))}(\one\oplus\one(-1)[-1],G\one)\\&\simeq 
  \Map_{\DA(k)}(\one(-1)[-1],G\one)\\&\simeq
   \Map_{\catD}( F\one(-1)[-1],\one).
   \end{aligned}
$$
    For the last sentence, we might consider the case $\catD=\DA(k)$ and remark that the base change along the augmentation $\one\oplus\one(-1)[-1]\to\one$, induced by the zero map $\one(-1)[-1]\to\one$,  corresponds to the functor $\Psi$.
\end{proof}
\begin{exm}Write $K_0 =W(k)[1/p]$.
    If $\mcA$ is the category of $(\varphi,N)$-modules and $F(\one(-1))=\one(-1)$, then $\Ext^1(\one(-1),\one)=K_0$ having as $K_0$-basis the ``Kummer extension'', i.e. the  $\varphi$-module $\one\oplus\one(-1)$ with monodromy $\left(\begin{smallmatrix}
        0&1\\0&0
    \end{smallmatrix}\right)$.
   % \alberto{ell-adic, Hodge case}
%    Similarly, if $\mcA=\mathrm{MHS}_\Q$ is the category of (rational) mixed Hodge structures, we have that $\Ext^1_{\mathrm{MHS}_\Q}(\one(-1),\one)=\C/\Z$
\end{exm}

In general, the object in $\Ext^1(F(\one(-1)),\one)$ associated to $F$ can be read out of the image of the \emph{Kummer motive} $\mcK\in\RigDA^{\gr}(K)$, (see \cite[\S 3.1]{bgv}) which is a direct summand of the motive of the  Tate curve $\G_m^{\an}/\varpi^{\Z}$ (responsible for its $H^1$-cohomology groups). Note that this curve is (the analytification of) an elliptic curve with nodal reduction. Testing the image of this motive via classical realization functors, one can then prove the following result (see \cite[\S 3.2, \S 4.8, \S 4.9]{bgv}):
 \begin{cor}\label{quick}
     %Let $K$ be a perfectoid field.
     %\begin{enumerate}
         Let $\ell\neq p$ be a prime. The realizations
$$
%\RigDA^{\gr}(K^\flat)\simeq
\RigDA^{\gr}(K)\simeq\DA_N(k)\to \mcD_{\varphi,N}(\Q_\ell)
$$
$$
%\RigDA^{\gr}(K^\flat)\simeq
\RigDA^{\gr}(K)\simeq\DA_N(k)\to  \mcD_{\varphi,N}(K_0)
$$
$$
%\RigDA^{\gr}(K^\flat)\simeq
\RigDA^{\gr}(\C(\!(t)\!))\simeq\DA_N(\C)\to  (\Ind \mathcal{D}^b(\MHS^p_\Q))_N
$$
         compute  the Weil-Deligne $\ell$-adic representation attached to  the $\ell$-adic cohomology, resp. Hyodo-Kato cohomology resp. limit Hodge cohomology. 
   %  \end{enumerate}
 \end{cor}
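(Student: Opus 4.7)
The strategy is to apply the classification result of the previous proposition. Each of the three candidate realizations on $\RigDA^{\gr}(K)$ (respectively on $\RigDA^{\gr}(\C(\!(t)\!))$) extends a well-understood realization on $\DA(k)$ (respectively on $\DA(\C)$) taking values in the corresponding category of coefficients endowed with a monodromy endomorphism, namely $\mcD_{\varphi,N}(\Q_\ell)$, $\mcD_{\varphi,N}(K_0)$, and $(\Ind\mathcal{D}^b(\MHS^p_\Q))_N$. By the equivalence $\Map_{\DA(k)^\otimes/}(\RigDA^{\gr}(K)^\otimes,\catD^\otimes)\simeq \Map_{\catD}(F(\one(-1)[-1]),\one)$ and the analogous abelian statement, any such monoidal extension is determined (up to a contractible space of choices) by a single class in $\Ext^{1}(F(\one(-1)),\one)$, and once this class is fixed, the whole extended functor is pinned down.

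The plan is therefore, first, to identify in each of the three classical settings the class in $\Ext^{1}$ corresponding to the classical Weil--Deligne/Hyodo--Kato/limit mixed Hodge realization; this amounts to writing down the canonical ``Kummer'' extension $\one\oplus\one(-1)$ with nontrivial nilpotent off-diagonal monodromy, which is the prototypical element of $\Ext^{1}(F(\one(-1)),\one)$ in each category of coefficients (cf.\ the example computing $\Ext^{1}(\one(-1),\one)=K_{0}$ for $(\varphi,N)$-modules). Second, one tests the motivic extension $\widehat{F}$ of \eqref{eq:def_Fhat} on the Kummer motive $\mcK\in\RigDA^{\gr}(K)$, which is a direct summand of the motive of the Tate curve $\G_{m}^{\an}/\varpi^{\Z}$. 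Because the Tate curve is (the analytification of) an elliptic curve with multiplicative reduction, each of the classical realization functors is known explicitly on it: in the $\ell$-adic case, the Weil--Deligne representation on $H^{1}$ is the standard Tate module of a nodal elliptic curve with nontrivial monodromy logarithm; in the Hyodo--Kato case, the $(\varphi,N)$-module on $H^{1}$ is the classical Kummer module encoding $\log\varpi$; and in the limit Hodge case over $\C(\!(t)\!)$, the limit mixed Hodge structure on $H^{1}$ of the degenerating Tate family is the standard Kummer mixed Hodge structure. In each case the associated class in $\Ext^{1}$ is exactly the generator.

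Third, one combines the two steps: since $\widehat{F}$ and the classical Weil--Deligne (resp.\ Hyodo--Kato, resp.\ limit Hodge) realization factor through $\RigDA^{\gr}$ via a monoidal functor over $\DA(k)$ and they share the same associated class in $\Ext^{1}(F(\one(-1)),\one)$, the classification proposition forces them to agree as monoidal extensions; the monodromy operator on the cohomology of any $X/K$ then automatically corresponds, under the equivalence $\RigDA^{\gr}(K)\simeq\DA_{N}(k)$, to the motivic monodromy $N_{\Psi\mathsf{M}(X)}$. One finally invokes \Cref{prop:semiss_is_gr} to extend the conclusion from compact motives in $\RigDA^{\gr}$ to general analytic varieties after possibly passing to a finite extension of $K$, which does not affect the identification.

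The main obstacle is the computation on the Kummer motive: one must set up the classical realizations (in particular the Hyodo--Kato realization, which requires a careful comparison with the motivic Monsky--Washnitzer functor of \Cref{sec:fromktoK} through the log-motivic description of $\Psi$) in such a way that the resulting extension class on the Tate curve can be identified unambiguously with the generator of $\Ext^{1}(F(\one(-1)),\one)$. All the subsequent bookkeeping is formal, driven by the universal property encoded in the classification proposition and by the semi-separatedness and spreading-out properties of motivic categories developed earlier.
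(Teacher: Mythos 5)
Your proposal follows essentially the same route as the paper: the paper's (sketched) argument is exactly to invoke the classification of monoidal extensions under $\DA(k)$ by $\Ext^1(F(\one(-1)),\one)$ and to pin down the relevant class by evaluating on the Kummer motive, a direct summand of the motive of the Tate curve $\G_m^{\an}/\varpi^{\Z}$, whose classical $\ell$-adic, Hyodo--Kato and limit Hodge realizations are the standard Kummer extensions. Your added remarks (matching the two extension classes, reduction to $\RigDA^{\gr}$ via \Cref{prop:semiss_is_gr}, and the caveat about setting up the classical Hyodo--Kato realization as a monoidal extension) are consistent with the detailed treatment the paper delegates to \cite[\S 3.2, \S 4.8, \S 4.9]{bgv}.
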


 \begin{cor}
     If $K$ is perfectoid, the tilting equivalence $\RigDA^{\gr}(K)\simeq\RigDA^{\gr}(K^\flat)$ is compatible with the $\ell$-adic and the Hyodo-Kato realizations.
 \end{cor}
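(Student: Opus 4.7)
The plan is to exploit the common role of the intermediate category $\DA(k)_N$ in both the tilting equivalence and in the definition of the realizations under consideration.

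First, by \Cref{quick}, each of the two realizations is of the form $\widehat R$ for a single functor $R\colon \DA(k)\to \catD^\otimes$, with $\catD=\mcD_{\varphi,N}(\Q_\ell)$ in the $\ell$-adic case and $\catD=\mcD_{\varphi,N}(K_0)$ in the Hyodo--Kato case; crucially, $R$ depends only on the common residue field $k$ of $K$ and $K^\flat$, so \emph{the same} $R$ is used to produce the extensions $\widehat R_K$ on $\RigDA^{\gr}(K)$ and $\widehat R_{K^\flat}$ on $\RigDA^{\gr}(K^\flat)$. By the classification proposition preceding \Cref{quick}, each such extension is determined uniquely by $R$ together with a point of $\Map_\catD(R(\one(-1)[-1]),\one)$, equivalently by the resulting image of the Kummer motive.

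Second, from the proof of \Cref{cor:tilting}, the tilting equivalence on $\RigDA^{\gr}$ is by construction the composite
\[\RigDA^{\gr}(K)\simeq \DA(k)_N\simeq \RigDA^{\gr}(K^\flat),\]
descended Galois-equivariantly when $K$ is not algebraically closed. Combining these two observations, $\widehat R_K$ and $\widehat R_{K^\flat}$ must agree under the tilting equivalence, since both are the unique extension of $R$ pinned down by the same classifying datum in $\Map_\catD(R(\one(-1)[-1]),\one)$ --- or, if one prefers a more geometric formulation, because the Kummer motive $\mcK$ corresponds to a single object of $\DA(k)_N$ (extracted from the nodal reduction of the Tate curve, which is an object over $k$ independent of any lift to characteristic zero), and hence its images $\mcK_K$ and $\mcK_{K^\flat}$ are identified by the tilting equivalence.

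The main technical obstacle I would anticipate is the Galois descent step: for $K$ not algebraically closed, one needs to verify that the identification $\RigDA^{\gr}(\widehat{K^{\rm alg}})\simeq\DA(k^{\rm alg})_N$, and its analogue for $K^\flat$, are $\Gal(K)\simeq \Gal(K^\flat)$-equivariant in a manner compatible with the realization $R$ (which is itself defined intrinsically at the level of the residue field and hence carries a natural Galois action). Once this equivariance is in place, passage to homotopy fixed points yields the corollary. All the remaining ingredients -- the explicit identification of the tilting equivalence on $\RigDA^{\gr}$ via $\DA(k)_N$, and the unique extension principle of \Cref{quick} -- are already established in the paper.
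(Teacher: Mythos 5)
Your argument is correct and takes essentially the same route as the paper: the paper's entire proof is your second observation, namely that the tilting equivalence on the graded subcategories is by construction the composite $\RigDA^{\gr}(K)\simeq\DA(k)_N\simeq\RigDA^{\gr}(K^\flat)$, through which the realizations of \Cref{quick} already factor, so the compatibility is essentially tautological. The extra machinery you invoke (uniqueness of extensions via the Kummer motive, and Galois descent) is not needed here, since \Cref{thm:bgv} applies directly to any $K$ with rank-one value group and perfect residue field, with no passage to the algebraic closure.
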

 \begin{proof}
     This is because the tilting equivalence for $\RigDA^{\gr}(K)$ can be obtained via the equivalences
     $$
     \RigDA^{\gr}(K)\simeq\DA_N(k)\simeq\RigDA^{\gr}(K^\flat).
     $$
 \end{proof}

 \subsection{The weight-monodromy conjecture}\label{sec:wm}
One of the first applications of the theory of perfectoid spaces and the almost purity theorem is Scholze's proof of the weight-monodromy conjecture for the $\ell$-adic cohomology of smooth proper varieties obtained as the intersection of smooth hypersurfaces in projective smooth toric varieties \cite{scholze}. As formulated by Deligne \cite{Deligne_hodgeI} (see \cite[Conjecture 3.9]{illusie}), the conjecture predicts in general that for any smooth projective variety $Y$ defined over a local field $K$, with residue field $k$ of characteristic $p>0$, the $i$-th $\ell$-adic cohomology groups $H^i(Y_{\overline{K}}, \Q_\ell)$ for $\ell\neq p$ have the property that the action of Frobenius on the $j$-th graded quotient with respect to the monodromy filtration $\mathrm{gr}^M_j H^i(Y_{\overline{K}}, \Q_\ell)$ is pure of weight $i+j$. This conjecture admits a $p$-adic variant, as predicted by Fontaine and Jannsen (see \cite[p. 347]{MR1012170}), where the $\ell$-adic cohomology groups are replaced by the Hyodo-Kato cohomology groups $H^i_{HK}(Y)$, which are equipped with their canonical structure of $(\varphi, N)$-modules over $W(k)[\frac{1}{p}]$.

In \cite{pWM}, we show that a part of Scholze's proof is of ``motivic nature''. As such, it can be extend almost verbatim to the case of an arbitrary realization, such as the one induced by the $p$-adic Hyodo--Kato cohomology. Recall the following definition.

\begin{dfn}
  A  (smooth) toric variety over a field $L$ is a (smooth) separated $L$-scheme $X$ containing a dense open subset $U$ isomorphic to a split torus $T=\mathbb{G}_{m,L}^r$ such that the tautological action of $T$ on $U$ by multiplication extends to $X$ (i.e., $T$ acts on $X$).  
\end{dfn} 

\begin{rmk} Every abstract normal separated toric variety can be realized as the toric variety associated to a fan $\Sigma$. More precisely, let $N$ be a free abelian group of finite rank, and write $N_\mathbb{R}$ for $N\otimes \mathbb{R}$. A fan $\Sigma$ in $N_\mathbb{R}$ is a finite collection of  strongly convex rational polyhedral cone  $\sigma\subset N_\mathbb{R}$ subject to the condition of being closed under taking faces (i.e., for each $\sigma\in \Sigma$ and each face $F$ of $\sigma$ we have $F\in \Sigma$), and such that for each $\sigma_1,\sigma_2 \in \Sigma$, we have that $\sigma_1\cap \sigma_2$ is a face of both (and so is also in $\Sigma$). 

Writing $M = \Hom(N,\Z)$, we can associate to each $\sigma\in \Sigma, \sigma \subset N_\mathbb{R}$ the affine scheme $U_\sigma = \Spec(L[\sigma^\vee \cap M])$. Since $\sigma$ is rational, it can be obtained as the cone spanned by a finite subset $S\subseteq N$. By  \cite[Theorem 1.3.12]{cox_little_schenck}, the scheme $U_\sigma$ is a non-singular $L$-scheme if the generators of $\sigma$ form a part of  a $\Z$-basis of $N$ (this is in particular independent of the field $L$). The various $U_\sigma$ for $\sigma\in \Sigma$ can be glued together to define the toric variety $X_{\Sigma, L}$. By Sumihiro's theorem \cite[Corollary 3.1.8]{cox_little_schenck}, for any  normal separated toric variety $X$, there exists a fan $\Sigma$ such that $X\simeq X_\Sigma$. In fact, $\Sigma$ can be taken as a fan in $N_\mathbb{R}$ for $N$ the character group of the torus $T$ acting on $X$.  % \alberto{Non capisco questa frase}  See
\end{rmk}

\begin{rmk}
A basic example of a toric variety is given by the projective space $\P^n_K$: this is obtained by the usual gluing of the affine schemes $\Spec(K[\N^n]) \simeq \A^n_K$. The reader unfamiliar with the general theory of toric varieties can safely stick to the case $X_\Sigma = \P^n$ in all statements below.  
\end{rmk}

\begin{rmk}\label{rmk:rel_frob_toric}The multiplication by $p\colon Q\to Q$ on  any monoid $Q$ induces an endomorphism on the corresponding affine toric variety $\Spec(\Z[Q])$. More generally, for any fan $\Sigma$, the multiplication by $p$ map induces a map $\varphi\colon X_\Sigma\to X_\Sigma$.
 
     If $K$ is a perfectoid field and $X_{\Sigma} = X_{\Sigma, K}$ is a toric variery over $K$ associated to a fan $\Sigma$, we can construct a perfectoid space $X_{\Sigma_\infty}^{\rm an}$ by gluing together the affinoid spaces $\Spa(K[\sigma^\vee\cap M[1/p])$. Note that this is equivalent to the weak limit $X_{\Sigma_\infty}^{\an}\sim \varprojlim_\varphi X_{\Sigma}^{\rm an}$, where $X_\Sigma^{\an}$ is the analytification of $X_\Sigma$. We will omit the superscript $\rm an$ if clear from the context. 
\end{rmk}
We can use a version of the spreading out theorem to control the cohomological motives associated to the perfectoid covers of toric varieties as follows.

\begin{cor}
\label{A=Ainf}%Let $\Sigma$ be a smooth complete fan and $X_{\Sigma}$ be the analytification of the associated  toric variety over $K$. 
Let $X_{\Sigma\infty}$ be the perfectoid cover of $X_{\Sigma} = X_{\Sigma, K}$ obtained as $X_{\Sigma\infty}\sim \varprojlim_{\varphi}X_\Sigma$ where $\varphi$  is the induced by the multiplication by $p$ map as in \Cref{rmk:rel_frob_toric}. The canonical projection induces an equivalence $\mathsf{M}^{coh}(X_{\Sigma\infty})\simeq \mathsf{M}^{coh}(X_\Sigma)$.
\end{cor}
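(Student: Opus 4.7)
My plan is to combine the spreading-out of cohomological motives (\Cref{cor:Xinfty=colim}) with a direct analysis of the pullback action of~$\varphi$ on $\mathsf{M}^{\rm coh}(X_\Sigma)$. Since $X_{\Sigma\infty}\sim\varprojlim_{\varphi}X_\Sigma$ is a weak limit of qcqs adic spaces of finite Krull dimension, \Cref{cor:Xinfty=colim} yields a canonical equivalence
\[
\mathsf{M}^{\rm coh}(X_{\Sigma\infty}) \simeq \varinjlim_{n}\mathsf{M}^{\rm coh}(X_\Sigma)
\]
in $\RigDA(K)$, whose transition maps are the pullbacks $\varphi^*$. It therefore suffices to prove that $\varphi^*\colon \mathsf{M}^{\rm coh}(X_\Sigma)\to \mathsf{M}^{\rm coh}(X_\Sigma)$ is itself an equivalence, for then the structural map from the zero-th level into the colimit is automatically an equivalence.

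To verify this, I would stratify $X_\Sigma$ by its torus orbits $T_\sigma\simeq\mathbb{G}_m^{r-\dim\sigma}$ indexed by cones $\sigma\in\Sigma$, where $r$ is the rank of the ambient torus. The endomorphism $\varphi$ preserves every stratum, restricting to the $p$-power map on each $T_\sigma$. Iterating the localization fiber sequences of \S\ref{s:6ff} --- which are natural under $\varphi^*$ precisely because $\varphi$ is strata-preserving --- expresses $\mathsf{M}^{\rm coh}(X_\Sigma)$ as an iterated extension of the cohomological motives $\mathsf{M}^{\rm coh}(T_\sigma)$. Since being an equivalence is closed under fiber sequences in a stable category, it is enough to show that $\varphi^*$ is an equivalence on each $\mathsf{M}^{\rm coh}(T_\sigma)$. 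By the K\"unneth formula (a formal consequence of the projection formula of \S\ref{s:6ff}), this further reduces to the case of $\mathbb{G}_m$, where $\mathsf{M}^{\rm coh}(\mathbb{G}_m)\simeq \mathbb{1}\oplus\mathbb{1}(-1)[-1]$ and $[p]^*$ acts as the identity on the first summand and as multiplication by~$p$ on the second; both are invertible in $\RigDA(K)$ since the coefficients are rational.

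The main obstacle is justifying the numerical claim that $[p]^*$ acts as multiplication by~$p$ on the Tate summand of $\mathsf{M}^{\rm coh}(\mathbb{G}_m)$. This can be checked motivically by identifying the relevant mapping space with $K^\times\otimes\Q$ via the $\mathbb{G}_m$-symbol map and observing that the coordinate $z\mapsto z^p$ scales the generating symbol by~$p$; alternatively, any of the conservative realizations in \Cref{quick} reduces the verification to the classical identity $[p]^*(dz/z)=p\cdot dz/z$ on differential forms, which suffices to detect an equivalence between the compact motives involved.
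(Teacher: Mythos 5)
Your argument reaches the right conclusion but by a genuinely different route from the paper's. Both proofs start identically: spreading out (\Cref{cor:Xinfty=colim}, i.e.\ \Cref{prop:spreadingonmotives}) reduces everything to showing that the endomorphism of $\mathsf{M}^{\rm coh}(X_\Sigma^{\an})$ induced by the multiplication-by-$p$ map $\varphi$ is invertible. At that point the paper transfers the question to the special fiber: $X_\Sigma$ and $\varphi$ extend to a smooth proper model over $\mcO_K$, the reduction of $\varphi$ is a universal homeomorphism (a relative Frobenius in toric coordinates), hence invertible in $\DA(k)$ by semi-separatedness (\Cref{cor:semisep}), and invertibility is transported through $\xi$. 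You instead prove invertibility directly on the generic fiber, by torus-orbit stratification, localization, K\"unneth, and the rational computation of $[p]^*$ on $\mathsf{M}^{\rm coh}(\G_{\rm m})\simeq\one\oplus\one(-1)[-1]$. The paper's route requires no computation and makes clear that this is a Frobenius phenomenon (cf.\ \Cref{sec:Frob}); yours avoids any appeal to an integral model and identifies explicitly the scalars by which $\varphi^*$ acts, using only that the coefficients are rational.

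Two steps in your write-up need more care. First, the endomorphism that naturality of the localization sequence induces on the closed-stratum term $p_*\iota_*\iota^!\one$ is \emph{not}, after the purity identification, simply the pullback along $\varphi|_{T_\sigma}$: the map $\varphi$ is ramified along the toric boundary, and the induced map on this Gysin term picks up a multiplicity which is a power of $p$ (already for $\{0\}\subset \A^1$ and $z\mapsto z^p$, the class $[V(z)]$ pulls back to $[V(z^p)]=p\,[V(z)]$). This factor is invertible in the $\Q$-linear setting, so your conclusion is unaffected, but the assertion ``it is enough to show that $\varphi^*$ is an equivalence on each $\mathsf{M}^{\rm coh}(T_\sigma)$'' does require this bookkeeping (or, what suffices, an argument that the induced endomorphism of the Gysin term is invertible, its precise form being irrelevant). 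Second, the realizations of \Cref{quick} are not known to be conservative, so do not invoke conservativity; what you actually need, and what does hold, is that a monoidal $\Q$-linear realization is faithful on $\End(\one)\simeq\Q$, which is enough to detect that $[p]^*$ acts by the nonzero scalar $p$ on the Tate summand.
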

\begin{proof}
    In light of \Cref{prop:spreadingonmotives}, it suffices to show that $\varphi$ induces an equivalence on the associated (cohomological) motives. Note that $X_\Sigma$ (and the map $\varphi$ as well) has a smooth proper model $\mcX_\Sigma$ over $\mcO_K$ so that the motivic map $\varphi$ is the image under $\xi$ of the motivic map on $\mathsf{M}^{\rm coh}(\mcX_{\Sigma,k})$ induced by $\bar{\varphi}\colon  \mcX_{\Sigma,k}\to\mcX_{\Sigma,k}$. The latter is a universal homeomorphism (by looking at local coordinates, it is in fact a relative Frobenius), and hence invertible in $\DA(k)$ (see \Cref{cor:semisep}). 
\end{proof}

\begin{notn}\label{not_setting_WM}
    From now on, we  fix a smooth toric variety $X_\Sigma$ obtained from some fan $\Sigma$ of a given Euclidean space $N_\mathbb{R}$. Let $K$ be a finite extension of $\mathbb{Q}_p$, and let $\varpi$ be a uniformizer of $K$. 
     Let $Z$ be a smooth projective $K$-scheme, which is a scheme-theoretic complete intersection inside $X_{\Sigma, K}$. Write $X_\Sigma^{\rm an}$ (resp.~$Z^{\rm an}$) for the analytification of $X_{\Sigma}$ (resp. of $Z$).  %Write $X_{\Sigma_\infty}$ for the perfectoid cover of $X_{\Sigma}$ 
\end{notn}  %If no confusion arises, we will omit the field $L$ over which $X_\Sigma$ is defined.

As a special case of \Cref{cor:tubes_spread}, we have the following.
\begin{cor}[{\cite[Corollary 4.5]{pWM}}]\label{tubularWM}
    Let $Z\subset X_\Sigma$ be as above.  %be a smooth $K$-variety which is a  scheme-theoretic complete intersection inside $X_\Sigma$. 
    There is an open neighborhood $Y\subseteq X^{\an}_{\Sigma}$ of $Z^{\an}$ for which $\mathsf{M}^{coh}(Z^{\an})\simeq \mathsf{M}^{coh}(Y)$.
\end{cor}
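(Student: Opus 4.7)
The plan is to reduce directly to \Cref{cor:tubes_spread} by realizing $Z^{\an}$ as the fiber over a $K$-rational point of a smooth morphism of smooth rigid varieties. First I would unpack the complete intersection data: by assumption, $Z\subset X_\Sigma$ is cut out scheme-theoretically by sections $s_i\in\Gamma(X_\Sigma,L_i)$ of line bundles $L_i$, where $c$ is the codimension. Since $Z$ is smooth of the expected dimension, the system $(s_1,\ldots,s_c)$ is transverse to the zero section, so on some open subscheme of $X_\Sigma$ containing $Z$ these sections assemble into a smooth morphism whose zero fiber is $Z$.

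Passing to the analytification, I would choose a finite affinoid cover $\{U_\alpha\}$ of an open neighborhood of $Z^{\an}$ in $X_\Sigma^{\an}$ on which each $L_i^{\an}$ trivializes. This is possible locally (using compactness of $Z^{\an}$), even though the $L_i^{\an}$ need not trivialize globally near $Z^{\an}$ --- e.g.\ $\mathcal{O}(d)$ on $\mathbb{P}^n$ fails to trivialize near any positive-degree hypersurface. On each $U_\alpha$ the sections become analytic functions which assemble into a smooth morphism $f_\alpha\colon U_\alpha\to\B^c_K$ (possibly after shrinking to where smoothness is visible) with $Z^{\an}\cap U_\alpha=f_\alpha^{-1}(0)$. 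Applying \Cref{cor:tubes_spread} to each pair $(f_\alpha,0)$ provides a final system of open neighborhoods $V$ of $0\in\B^c(K)$ for which $\mathsf{M}^{\rm coh}(f_\alpha^{-1}(V))\simeq\mathsf{M}^{\rm coh}(Z^{\an}\cap U_\alpha)$, and finality lets me choose a common $V$; setting $Y_\alpha:=f_\alpha^{-1}(V)$ and $Y:=\bigcup_\alpha Y_\alpha$ produces an open neighborhood of $Z^{\an}$ in $X_\Sigma^{\an}$.

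The main obstacle will be assembling the local equivalences $\mathsf{M}^{\rm coh}(Y_\alpha)\simeq\mathsf{M}^{\rm coh}(Z^{\an}\cap U_\alpha)$ into the desired global equivalence $\mathsf{M}^{\rm coh}(Y)\simeq\mathsf{M}^{\rm coh}(Z^{\an})$. The crucial point is that, by inspection of the proof of \Cref{cor:tubes_spread}, each such local equivalence is induced by the natural map arising from the inclusion of the zero fiber into its tubular neighborhood, so the local equivalences are automatically compatible with restriction to overlaps; \'etale (hyper)descent for $\mathsf{M}^{\rm coh}(-)$ then yields the global equivalence by a routine \v{C}ech argument. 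A more geometric alternative would be to produce a single rigid analytic tubular neighborhood of $Z^{\an}$ realized as a polydisc bundle over $Z^{\an}$ and conclude by $\B^1$-invariance, but this rediscovers essentially \Cref{cor:tubes_spread} from a different angle, and still requires local trivializations of the normal bundle.
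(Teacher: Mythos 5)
Your reduction to \Cref{cor:tubes_spread} via local trivializations is natural, and your observation that no \emph{global} reduction is available is correct and important: the normal bundle of a fiber of a smooth map is trivial, while $N_{Z/X_\Sigma}\simeq\bigoplus_i L_i|_Z$ is typically not, so $Z^{\an}$ is never literally the fiber over a rational point of a smooth map defined on a neighborhood in $X^{\an}_\Sigma$. But the step where you pass from the local equivalences to the global one is a genuine gap, and it is exactly where the content of the statement lies. First, the equivalences $\mathsf{M}^{\rm coh}(f_\alpha^{-1}(V))\simeq \mathsf{M}^{\rm coh}(Z^{\an}\cap U_\alpha)$ produced by the proof of \Cref{cor:tubes_spread} come from a spreading-out isomorphism $\mathsf{M}_U(X_U)\simeq \mathsf{M}_U(X_s\times U)$ which is only pinned down by its restriction at the point $s$; identifying the resulting equivalence with the canonical restriction map along $Z^{\an}\cap U_\alpha\hookrightarrow Y_\alpha$ requires an argument, not ``inspection''. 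Second, and more seriously, the \v{C}ech comparison you invoke needs the canonical maps $\mathsf{M}^{\rm coh}(Y_{\alpha_0}\cap\cdots\cap Y_{\alpha_n})\to \mathsf{M}^{\rm coh}(Z^{\an}\cap U_{\alpha_0}\cap\cdots\cap U_{\alpha_n})$ to be equivalences on \emph{all} multiple overlaps, and these overlaps are not of the form covered by \Cref{cor:tubes_spread}: $Y_\alpha\cap Y_\beta=\{x: f_\alpha(x)\in V,\ f_\beta(x)\in V\}$ is the preimage of a polydisc under the map $(f_\alpha,f_\beta)$, which is not smooth onto $\B^{2c}$, and it is not $f_\alpha^{-1}(V')$ for any $V'$. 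The two systems of tubes $\{f_\alpha\in V\}$ and $\{f_\beta\in V\}$ on $U_\alpha\cap U_\beta$ are mutually cofinal (the transition units are bounded on affinoids), but sandwiching an open between two tubes whose motives agree with that of the zero locus only yields retraction-type information, not the level-wise equivalences your descent argument needs. So the ``routine \v{C}ech argument'' is not routine; closing it would amount to proving a strengthened, multi-equation version of \Cref{cor:tubes_spread} (or arguing globally in some other way), which is precisely what the cited proof of {\cite[Corollary 4.5]{pWM}} has to supply and what your proposal leaves open.
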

 Our next goal is  to revisit the proof of the weight-monodromy conjecture for $Z$, isolating the ``motivic'' part of the argument. Since the statement of the conjecture is insensitive to purely ramified extensions, we may and do replace $K$ by the completion of $K(\varpi^{1/p^{\infty}})$. In particular, we can assume that $Z$ is defined over a perfectoid field. Write $K^\flat$ for its tilt.  Similarly, write $X^\flat_\Sigma$ for the smooth toric variety defined over $K^\flat$ by the same fan used to construct $X_\Sigma$. We also write $X_{\Sigma,\infty}^{\rm an}$ (resp.~$X_{\Sigma,\infty}^{{\rm an}  \flat}$) for the perfectoid cover of $X_\Sigma$ (resp.~of $X^\flat_\Sigma$) considered in \Cref{rmk:rel_frob_toric}. 
% \begin{cor}
%     Let $A$ be an abelian variety over $K$ and $A_\infty$ its perfectoid cover obtained as $A_\infty\sim \varprojlim_{[p]}A$. The canonical projection induces an equivalence $\mathsf{M}^{\rm coh}(A_\infty)\simeq \mathsf{M}^{\rm coh}(A)$.
% \end{cor}
% \begin{proof}
%     In light of \Cref{cor:Xinfty=colim}, it suffices to show that $[p]\colon A\to A$ induces an equivalence on the associated (cohomological) motives. This follows e.g. by \cite[Theorem 7.1.1(1)]{A-EW-H}.
% \end{proof}

% \begin{thm}\label{tiltWear}
%     Let  $Z$ be a smooth hypersurface in an abelian variety $A$ over $K$ and $Y$ a tubular neighborhood of $Z^{\an}$ in $A^{\an}$. Up to replacing $K$ with a totally unramified extension,  there is an abelian variety $A^\flat$ over $K^\flat$, a hypersurface $Z^\flat$ in $A^\flat$ and a tubular neighborhood $Y^\flat$ of $Z^{\flat\an}$ in $A^{\flat\an}$ such that $A^\flat_\infty\simeq (A_\infty)^\flat$, and the open $Y_\infty$ corresponds to $Y_\infty^\flat$ under the homeomorphism $|A_\infty|\simeq|A_\infty^\flat|$.  For any chosen dense subfield of $K^\flat$, we may even suppose that the hypersurface $Z^\flat$ admits a model over it.
% \end{thm}

The following approximation result is due to Scholze and is one of the essential geometric arguments for the proof. 
\begin{thm}[{\cite[Proposition 8.7, Corollary 8.8]{scholze}}]\label{tiltScholze} Let $K$ be a perfectoid field.
    Let  $Z$ be a  hypersurface in a smooth proper toric variety $X_{\Sigma}$ over $K$ and $Y$ a tubular neighborhood of $Z^{\an}$ in $X_{\Sigma}^{\an}$. There exists a  hypersurface $Z^\flat$ in $X_{\Sigma}^{\flat}$ and an open  neighborhood $Y^\flat$ of $Z^{\flat\an}$ in $X_{\Sigma}^{\flat\an}$ such that  the open $Y_\infty$ corresponds to $Y^\flat$ under the homeomorphism $|X^{\an}_{\Sigma\infty}|\simeq|X_{\Sigma\infty}^{\an\flat}|\simeq|X_{\Sigma}^{\an\flat}|$. For any chosen dense subfield of $K^\flat$, we may even suppose that the hypersurface $Z^\flat$ admits a model over it.

    More generally, if $Z = Y_1\cap \ldots \cap Y_c$ is a codimension $c$ complete intersection in $X_\Sigma$ and $Y$ is a tubular neighborhood of $Z^{\an}$ in $X_\Sigma^{\an}$, there exists a closed subvariety $Z^\flat$ in $X^\flat_\Sigma$ of codimension $c$ and an open   neighborhood $Y^\flat$ of $Z^{\flat\an}$ in $X_{\Sigma}^{\flat\an}$ such that  the open $Y_\infty$ corresponds to $Y^\flat$. Again, for any chosen dense subfield of $K^\flat$, we may  suppose that the hypersurface $Z^\flat$ admits a model over it.
\end{thm}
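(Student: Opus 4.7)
The plan is to follow Scholze's strategy from \cite{scholze}, which combines the tilting equivalence for perfectoid spaces with his approximation lemma for functions on perfectoid algebras. The starting ingredients are the canonical homeomorphism $|X_{\Sigma,\infty}^{\rm an}| \simeq |X_{\Sigma,\infty}^{\rm an\flat}|$ furnished by the tilting correspondence, together with the fact that $Y_\infty$, being the preimage under the projection $\pi \colon X_{\Sigma,\infty}^{\rm an} \to X_{\Sigma}^{\rm an}$ of a quasi-compact open tubular neighborhood $Y$, is itself quasi-compact open. By \Cref{tubularWM} we may shrink $Y$ so that, on each affine toric chart $U_\sigma$, the locus $Y \cap U_\sigma^{\rm an}$ has the form $\{x : |f_i(x)| \leq \varepsilon_i,\, i = 1,\ldots,c\}$ for local defining equations $f_1,\ldots,f_c$ of $Z \cap U_\sigma$ and suitable real constants $\varepsilon_i$.

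First I would reduce the problem to working chart-by-chart on the affinoid perfectoid spaces $U_{\sigma,\infty}^{\rm an}$, each of which has an explicit tilt $U_{\sigma,\infty}^{\rm an\flat}$ with the same underlying topological space. The main step is to invoke Scholze's approximation lemma for each of the functions $\pi^* f_i$: it provides $g_i^\flat$ on the tilted chart such that the untilt of $\pi^* g_i^\flat$ approximates $\pi^* f_i$ in the topology of $\mathcal{O}^+/p$ to any prescribed $p$-adic precision. Choosing the precision fine enough with respect to each $\varepsilon_i$ forces $\{|g_i^\flat| \leq \varepsilon_i\}$ and $\{|\pi^* f_i| \leq \varepsilon_i\}$ to coincide as topological subsets of $|U_{\sigma,\infty}^{\rm an\flat}| \simeq |U_{\sigma,\infty}^{\rm an}|$. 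Since the perfectoid tilt is obtained as a weak limit along Frobenius of finite-level toric charts, a further application of the spreading-out principle allows us to descend the $g_i^\flat$ to regular functions on $U_\sigma^\flat$. For the density claim, I would use that Scholze's lemma is flexible enough to select the $g_i^\flat$ from any dense subring of the sections, hence from a prescribed dense subfield of $K^\flat$.

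The main obstacle will be the globalization step: ensuring that the local approximations $g_i^\flat$ can be made compatible with the transition maps between toric charts so as to glue into globally defined equations cutting out a closed subvariety $Z^\flat \subseteq X_\Sigma^\flat$, and verifying that this $Z^\flat$ is a genuine codimension $c$ complete intersection rather than a degenerate locus. The toric structure alleviates both concerns: the gluing data between affine charts consists of monomial transformations that are determined by the fan $\Sigma$ and are manifestly compatible with tilting (both $X_\Sigma$ and $X_\Sigma^\flat$ arise from the same fan), so one can perform the approximations simultaneously on all overlaps by a standard finite covering argument. The codimension assertion is obtained from the fact that the tilted zero locus has, by construction, the same underlying topological space inside $X_{\Sigma,\infty}^{\rm an\flat}$ as the zero locus of the $f_i$ inside $X_{\Sigma,\infty}^{\rm an}$, and codimension of Zariski closures in the ambient toric variety is preserved under this identification.
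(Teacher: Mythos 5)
The paper does not actually prove this statement: it is quoted directly from Scholze (\cite[Proposition 8.7, Corollary 8.8]{scholze}), so the only benchmark is Scholze's argument, whose broad shape (approximation lemma plus the tilting homeomorphism on the perfectoid toric cover) you have correctly identified. However, your implementation has a genuine gap precisely at the point you flag as the ``main obstacle''. The approximation lemma, applied chart-by-chart to local defining equations $f_i$ on the charts $U_{\sigma,\infty}$, produces approximants $g_i^\flat$ that carry no compatibility whatsoever on overlaps: they are only determined up to a small error, they are not monomial, and the fact that the transition maps between toric charts are monomial does not make independently chosen approximants agree where two charts meet. ``Performing the approximations simultaneously on all overlaps'' is not a construction, and there is no mechanism in the approximation lemma to enforce gluing. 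Scholze's proof is structured exactly so as to never face this problem: because $Z$ is a scheme-theoretic complete intersection of \emph{hypersurfaces} $Y_1,\dots,Y_c$ in the toric variety, each $Y_i$ is the zero locus of a \emph{global} section of a line bundle on $X_\Sigma$, and the space of global sections is spanned by characters (monomials), which tilt canonically. One approximates these global sections once and for all on the perfectoid cover, and defines $Z^\flat$ as the intersection of the resulting $c$ global hypersurfaces in $X_\Sigma^\flat$; this is the only place where the toric/complete-intersection hypothesis enters, and it cannot be replaced by a purely local argument of the kind you propose.

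Two further steps are also too strong or unfounded as stated. The approximation lemma does not make the sublevel sets $\{|g_i^\flat|\leq \varepsilon_i\}$ and $\{|\pi^*f_i|\leq\varepsilon_i\}$ \emph{coincide}; it only gives one-sided inclusions with a controlled loss in the radius, which is all that is needed (the zero locus of the approximant lands inside the given tube $Y_\infty$, so $Z^{\flat\an}\subset Y^\flat$ where $Y^\flat$ is the image of $Y_\infty$ under the homeomorphism). Consequently your codimension argument, which rests on the tilted zero locus having ``the same underlying topological space'' as the zero locus of the $f_i$, does not work: the two loci are genuinely different, merely close. In Scholze's construction the codimension statement comes instead from the fact that $Z^\flat$ is by construction an intersection of $c$ hypersurfaces contained in a small neighborhood of a codimension-$c$ locus, not from any topological identification of zero sets. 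The density claim (choosing $Z^\flat$ with a model over a prescribed dense subfield of $K^\flat$) is fine: the containment of the zero locus in the open tube is an open condition on the coefficients, so one may perturb them into the dense subfield.
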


\begin{cons}\label{cons} Consider again the situation of \Cref{not_setting_WM}. Replacing again $K$ by  the completion of $K(\varpi^{1/p^{\infty}})$,  assume that $K$ is a perfectoid field. 
Using    \Cref{tiltScholze}, and choosing some motivic tubular neighborhood $Y$ of $Z^{\an}$ as in \Cref{tubularWM}, we obtain a diagram of adic spaces:
    $$\xymatrix{
    X_{\Sigma}^{\an} & X^{\an}_{\Sigma\infty} \ar[l]\ar@{<->}[r]& X^\flat_{\Sigma\infty}\ar[r] & X_{\Sigma}^{\flat\an}\\
    Y\ar[u]& Y_\infty\ar[u] \ar[l]\ar@{<->}[r]& Y_\infty^\flat\ar[u]\ar[r] & Y^\flat\ar[u]\\
    Z^{\an}\ar[u]&&&Z^{\flat\an}\ar[u]
    }$$
    where in the middle the arrows indicate a tilting equivalence of two perfectoid spaces over $K$ and $K^\flat$. Denote by $(-)^\sharp$ the equivalence $\RigDA(K^\flat)\simeq \RigDA(K)$  of \Cref{cor:tilting}.  The diagram above induces the following diagram of \emph{motives} over $K$:
      $$\xymatrix{
    \mathsf{M}^{\rm coh}(X_{\Sigma}^{\an}) \ar[r]^{\sim}\ar[d]& \mathsf{M}^{\rm coh}(X^{\an}_{\Sigma\infty})\ar@{-}[r]^{\sim}\ar[d]& \mathsf{M}^{\rm coh}(X^\flat_\infty)^\sharp \ar[d]&\mathsf{M}^{\rm coh}( X_{\Sigma}^{\flat\an})^\sharp\ar[l]_{\sim}\ar[d]\\
    \mathsf{M}^{\rm coh}(Y)\ar[d]^{\sim}\ar[r]^{\bigoplus}& \mathsf{M}^{\rm coh}(Y_\infty)\ar@{-}[r]^{\sim}& \mathsf{M}^{\rm coh}(Y_\infty^\flat)^\sharp &\mathsf{M}^{\rm coh}( Y^\flat)^\sharp\ar[d]\ar[l]_{\sim}\\
   \mathsf{M}^{\rm coh}( Z^{\an})&&&\mathsf{M}^{\rm coh}(Z^{\flat\an})^\sharp
    }$$
    where the symbol $\oplus$ denotes an inclusion of a direct factor (by \Cref{dirfact}, but this is not important), the equivalences in the middle follow from \Cref{tilt}, the one in the bottom left follows from the choice of $Y$ (see \Cref{tubularWM}), the one on the top left from \Cref{A=Ainf} and the ones on the right from \Cref{cor:semisep}.

    In particular, we have a map
    $
    \mathsf{M}^{\rm coh}(Z^{\an})\to \mathsf{M}^{\rm coh}(Z^{\flat\an})^\sharp$.  We may even use \cite[Theorem 4.1]{dJong} to find a smooth alteration $T^\flat$ of $Z^\flat$ therefore giving rise to a map
    $
    \mathsf{M}^{\rm coh}(Z^{\an})\to \mathsf{M}^{\rm coh}(T^{\flat\an})^\sharp.$
    We also note that this  map is compatible with the diagonal maps $Z\to Z\times Z$ resp. $T^\flat\to T^\flat\times T^\flat$ (as it is given by a composition of geometric maps, tilting equivalences and inverses thereof). %As such, they are compatible with the ring structure on  $\mathsf{M}^{\rm coh}(Z^{\an})$ resp. $\mathsf{M}^{\rm coh}(T^\flat)$ (which is induced by the diagonal maps).
\end{cons}

We are going to show that $\mathsf{M}^{\rm coh}(Z^{\rm an})$ is a \emph{direct summand} of $\mathsf{M}(T^\flat)^\sharp$. This was observed by Scholze at the level of $\ell$-adic cohomology groups, but in fact the proof only relies on a form of Poincaré duality, hence on motivic results, as we now explain. 
\begin{cons}
Let $X$ be an algebraic [resp. analytic] variety over $K$. The associated cohomological motive $M\colonequals p_*\one=\mathsf{M}^{\rm coh}(X)\in\DA(K)$ [resp. in $\RigDA(K)$] inherits a ring structure. If $X$ is smooth, proper of dimension $d$, then there is a natural duality pairing
$$
M\otimes M\to M\to \one(-d)[-2d]
$$
which exhibits $M$ as its self-dual, up to twist and shift: $M\simeq\uhom(M,\one(-d)[-2d])$.

Assume now $M'$ is another ring object, equipped with a similar duality pairing giving rise to $M'\simeq\uhom(M',\one(-d)[-2d])$, for the same $d$ as before. Any ring map $f\colon M\to M'$ then gives also rise to a dual map
$$
f^\vee\colon M'\simeq \uhom(M',\one(-d)[-2d])\xto{f^*}\uhom(M,\one(-d)[-2d])\simeq M.
$$ 
Note that the composition $f^\vee\circ f\colon M\to M$ corresponds to a pairing $M\otimes M\to \one(-d)[-2d]$ obtained as follows:
$$
\xymatrix{
M\otimes M\ar[d]^{f\otimes \id}\ar[r]^{\mu} & M\ar[dd]^{f} \ar[r] & \one(-d)[-2d]\ar@{.>}[dd]^{\sim}\\
M'\otimes M\ar[d]^{\id\otimes f}\\
M'\otimes M'\ar[r]^{\mu} & M' \ar[r] & \one(-d)[-2d]
}
$$
In case there is an invertible automorphism (the dotted arrow above) of $\one(-d)[-2d]$ making the  diagram commute, then we can deduce that the map $f^\vee\circ f$ is equivalent to the map corresponding to the duality pairing (i.e. the identity map of $M$) and hence that $M$ is a direct summand of $M'$. Luckily, there is an easy criterion to fill in the missing edge, as the next proposition shows.
\end{cons}

\begin{prop}\label{rmk:split}
    Let $X$ be a connected smooth and proper algebraic variety over $K$ of dimension $d$, with semi-stable reduction. Then $\Hom(\mathsf{M}^{\rm coh}(X^{\an}),\one(-d)[-2d])\simeq\Q$. In particular, all non-zero maps $\mathsf{M}^{\rm coh}(X)\to\one(-d)[-2d]$ are equivalent with each other, up to a scalar multiplication.
\end{prop}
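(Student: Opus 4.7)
My plan is to reduce the computation via Poincaré duality to the degree-zero étale cohomology of the connected variety $X^{\an}$, which equals $\Q$. For the structure morphism $p\colon X^{\an}\to \Spa K$, smooth and proper of relative dimension $d$, relative purity and properness (properties~(ii) and (iii) of \S\ref{s:6ff}) give $p^!\simeq p^*(d)[2d]$ and $p_!\simeq p_*$, whence $\mathsf{M}(X^{\an})=p_!p^!\one\simeq p_*p^*\one(d)[2d]=\mathsf{M}^{\rm coh}(X^{\an})(d)[2d]$. Simultaneous twisting and shifting thus identifies
\[
\Hom_{\RigDA(K)}\bigl(\mathsf{M}^{\rm coh}(X^{\an}),\one(-d)[-2d]\bigr)\simeq \Hom_{\RigDA(K)}\bigl(\mathsf{M}(X^{\an}),\one\bigr).
\]

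Next, I would interpret the right-hand side as étale cohomology. Smoothness of $p$ gives $\mathsf{M}(X^{\an})=p_\sharp\one_{X^{\an}}$, and the $p_\sharp\dashv p^*$ adjunction identifies the Hom with $\Hom_{\RigDA(X^{\an})}(\one,\one)$. By the construction of $\RigDA$ as the Tate stabilization of the $\mathbb{B}^1$-localization of $\Q$-valued hypercomplete étale sheaves on $\RigSm$ (\Cref{dfn:motives}), this computes $H^0_{\et}(X^{\an},\Q)$. The semi-stable reduction hypothesis furnishes a proper semi-stable formal model $\mathfrak{X}/\mcO_K$ with connected special fiber (by Stein factorization applied to $\mathfrak{X}\to\Spec\mcO_K$, whose generic fiber $X$ is connected); hence $X^{\an}$ is connected, so $H^0_{\et}(X^{\an},\Q)\simeq\Q$. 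The ``in particular'' clause then follows since $\Q$ is one-dimensional over itself, so any two non-zero maps differ by a unique element of $\Q^\times$.

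The main obstacle is verifying the sheaf-theoretic identification of $\Hom_{\RigDA(X^{\an})}(\one,\one)$ with $H^0_{\et}(X^{\an},\Q)$ rigorously inside the Tate-stabilized category, where one must check that Tate inversion does not alter untwisted endomorphisms of the unit. An alternative, closer in spirit to \S\ref{sec:fromKtok}, leverages the equivalence $\RigDA(K)^{\rm gr}\simeq\DA(k)_N$ of \Cref{thm:DA_N_is_RigDA}: by \Cref{prop:semiss_is_gr}, $\mathsf{M}(X^{\an})$ lies in $\RigDA(K)^{\rm gr}$ and transports to $(\Psi\mathsf{M}(X^{\an}),N)$, while $\one=\xi(\one)$ transports to $(\one,0)$. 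The mapping-space formula of \Cref{def:def_DN} then expresses the target Hom as the fiber of post-composition with $N$,
\[
\Hom_{\DA(k)}(\Psi\mathsf{M}(X^{\an}),\one)\longrightarrow \Hom_{\DA(k)}(\Psi\mathsf{M}(X^{\an}),\one(-1));
\]
the target vanishes for weight reasons (it would compute a $(-2)$-degree contribution) and the source computes $H^0$ of the connected special fiber, again giving $\Q$.
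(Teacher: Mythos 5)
Your formal reductions are fine (purity and properness give $\mathsf{M}^{\rm coh}(X^{\an})(d)[2d]\simeq\mathsf{M}(X^{\an})$, and $p_\sharp\dashv p^*$ gives $\Hom_{\RigDA(K)}(\mathsf{M}(X^{\an}),\one)\simeq\End_{\RigDA(X^{\an})}(\one)$), but they only restate the problem: that endomorphism group is exactly the group to be computed, and its identification with $H^0_{\et}(X^{\an},\Q)$ is the entire content. The ``obstacle'' you flag --- that Tate stabilization does not change untwisted maps into the unit --- is a cancellation-type theorem for rigid analytic motives (together with the $\B^1$-locality of the constant hypersheaf $\Q$), and you neither prove it nor cite a usable reference; it is not a formality, and the paper deliberately avoids it. Note also that this route never uses the semi-stable reduction hypothesis, whereas the paper's argument uses it essentially, which should be a warning sign that the naive \'etale-cohomology identification is not freely available in $\RigDA(K)$.

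Your alternative route begins like the paper's proof (transport along $\RigDA(K)^{\gr}\simeq\DA(k)_N$ of \Cref{thm:DA_N_is_RigDA}, with $\one\mapsto(\one,0)$), but the decisive computations are only asserted. The mapping-space formula of \Cref{def:def_DN} gives a long exact sequence
\[
\Hom_{\DA(k)}(\Psi M,\one(-1)[-1])\to\Hom_{\DA(k)_N}\bigl((\Psi M,N),(\one,0)\bigr)\to\Hom_{\DA(k)}(\Psi M,\one)\xrightarrow{N}\Hom_{\DA(k)}(\Psi M,\one(-1)),
\]
so you must (i) show $\Hom_{\DA(k)}(\Psi M,\one)\simeq\Q$, (ii) show $\Hom_{\DA(k)}(\Psi M,\one(-1))=0$, and (iii) control the term $\Hom_{\DA(k)}(\Psi M,\one(-1)[-1])$, which you never mention. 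None of these is automatic: $\Psi M$ is the nearby-cycles motive, not the motive of the special fiber nor of any smooth variety, and as a compact object it is built from pieces $\mathsf{M}(D_I)(a)[b]$ with \emph{negative} twists $a$ (see the displayed weight complex in the paper), so ``weight reasons'' and ``$H^0$ of the connected special fiber'' are placeholders rather than arguments. The paper supplies precisely this missing content: it reduces by duality and the $(\xi,\chi)$-adjunction to $\Hom_{\DA(k)}(\one(d)[2d],\chi\mathsf{M}(X))$, uses that $\one(d)[2d]$ is pure for the Chow weight structure on $\RigDA^{\gr}(K)$ together with the explicit weight complex of $\chi\mathsf{M}(X)$ in terms of the strata $D_I$ of the semi-stable model, computes each term as a Chow group $\mathrm{CH}^{2-|I|}(D_I)_\Q$ (zero for $|I|>2$), and identifies the answer with the kernel of the difference map $\bigoplus_i\Q\to\bigoplus_{i\neq j}\Q$, which is $\Q$ by connectedness. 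To make your proposal a proof you would have to carry out an analogous stratum-by-stratum weight-complex computation (for $\Psi M$, or pass to $\chi M=\fib(N)$ as the paper does), which is exactly the part you left out.
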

\begin{proof}
By duality, we may alternatively prove that $\Hom(\one(d)[2d],\mathsf{M}(X))\simeq \Q$. This group is, by adjunction, the same as $\Hom(\one(d)[2d],\chi \mathsf{M}(X))$. The object on the left is pure with respect to the Chow-weight structure on $\RigDA^{\gr}(K)$ (see \cite[\S 4.3]{bgv}). For the second, there is an explicit description of its weight complex  in terms of the irreducible components $D_i$ of its special fiber as shown in \cite[Example 4.72]{bgv}. More precisely (we exhibit the complex starting from degree 2, we let $D_I$ be $\bigcap_{i\in I} D_i$ and omit the notation $\mathsf{M}(-)$):
$$w(\chi \mathsf{M}(X))= \cdots\to \bigoplus D_{ij}(-1)[-2]\to \bigoplus D_i(-1)[-2]\to \bigoplus D_i\to \bigoplus D_{ij}(1)[2]\to \cdots
$$
We remark that $\Hom(\one(d)[2d],\bigoplus D_i)\simeq \bigoplus \Q$ and  $\Hom(\one(d)[2d],\bigoplus D_{ij}(1)[2])\simeq \bigoplus \Q$. The terms in positive degree $\Hom(\one(d)[2d],\bigoplus D_{I}(1)[2])$ are on the other hand equal to $0$ for $|I|> 2$. This follows from the identification (as a consequence of e.g.~\cite[Corollary 2]{voev_chow})
\begin{align*}\Hom_{\DA(k)}(\one(d)[2d], \mathsf{M}(D_{I})(1)[2]) &\simeq \Hom_{\DA(k)}(\mathsf{M}(D_{I}), \one(1-d+r)[2(1-d+r)])  \\ &\simeq \mathrm{CH}^{1-d+r}(D_I)_\Q\end{align*}
where $r = \dim(D_I)=d-|I|+1$. The exponent $1-d+r$ is negative for $|I|>2$, so the term is zero. 
 We deduce that $\Hom(\one(d)[2d],\mathsf{M}(X))$ is the kernel of the natural ``difference'' map $\bigoplus_{i}\Q\to \bigoplus_{i\neq j}\Q$ which is given by the diagonal embedding $\Q\to \bigoplus_{i}\Q$.
\end{proof}

\begin{cor}\label{original}
    Let $Z$ be a smooth scheme-theoretic complete intersection in a smooth projective toric variety $X_\Sigma$. The weight-monodromy conjecture holds for the Hyodo--Kato cohomology of $Z$. 
\end{cor}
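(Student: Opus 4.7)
The plan is to exploit the motivic map $f\colon\mathsf{M}^{\rm coh}(Z^{\an})\to\mathsf{M}^{\rm coh}(T^{\flat\an})^{\sharp}$ manufactured in \Cref{cons}: first upgrade it to a direct summand inclusion in $\RigDA(K)^{\gr}$, then apply a Hyodo--Kato realization compatible with the tilting equivalence, and finally reduce the weight-monodromy for $Z$ to the known equicharacteristic weight-monodromy for the smooth projective variety $T^{\flat}$.

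For the direct summand step, I would first pass to a finite extension of $K$ so that $Z$ acquires semistable reduction, an operation which does not affect the truth of the weight-monodromy conjecture. Both $Z^{\an}$ and $T^{\flat\an}$ are smooth and proper of the same dimension $d$, so the six-functor formalism provides self-dualities $\mathsf{M}^{\rm coh}(Z^{\an})\simeq\uhom(\mathsf{M}^{\rm coh}(Z^{\an}),\one(-d)[-2d])$, and similarly for $T^{\flat\an}$. By the construction in \Cref{cons}, $f$ is obtained by composing geometric pullbacks with the monoidal tilting equivalence, each of which respects diagonals, so $f$ is a morphism of commutative ring objects. Dualising and composing, the endomorphism $f^{\vee}\circ f$ of $\mathsf{M}^{\rm coh}(Z^{\an})$ corresponds to a morphism $\mathsf{M}^{\rm coh}(Z^{\an})\to\one(-d)[-2d]$; by \Cref{rmk:split}, the $\Q$-vector space of such morphisms is one dimensional, so $f^{\vee}\circ f$ is a scalar multiple of the canonical self-pairing. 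The scalar can be checked to be nonzero by testing against a classical realization such as the $\ell$-adic one, where it recovers, up to nonzero factors, the degree of the alteration $T^{\flat}\to Z^{\flat}$. Hence $f$ admits a retraction and $\mathsf{M}^{\rm coh}(Z^{\an})$ is a direct summand of $\mathsf{M}^{\rm coh}(T^{\flat\an})^{\sharp}$.

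Next, I would apply the Hyodo--Kato realization from \Cref{quick} to this direct summand. By the preceding corollary on the compatibility of the tilting equivalence with the Hyodo--Kato realization, the Hyodo--Kato $(\varphi,N)$-module attached to $\mathsf{M}^{\rm coh}(T^{\flat\an})^{\sharp}$ is canonically identified with that of the equicharacteristic variety $T^{\flat}$, so $H^{*}_{HK}(Z)$ becomes a direct summand of $H^{*}_{HK}(T^{\flat})$ in the category of $(\varphi,N)$-modules over $K_{0}$.

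Finally, the weight-monodromy conjecture for $T^{\flat}$, a smooth projective variety over the equicharacteristic local field $K^{\flat}$, is classical, following from Deligne's Weil~II applied to the Rapoport--Zink weight spectral sequence for rigid cohomology equipped with its monodromy operator. Since purity of the graded pieces of the monodromy filtration under the Frobenius action is preserved under passing to direct summands in the category of $(\varphi,N)$-modules, this yields weight-monodromy for $H^{*}_{HK}(Z)$. The main obstacle is the nonvanishing of the scalar $f^{\vee}\circ f$ in the direct summand step: it requires coherently tracking fundamental classes through the tilting equivalence, the perfectoid cover, the tubular neighborhood, and the alteration, which becomes tractable precisely through the combined force of \Cref{rmk:split}, \Cref{A=Ainf}, \Cref{tubularWM}, and \Cref{cor:semisep}.
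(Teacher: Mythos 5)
Your overall architecture matches the paper's: pass to a finite/perfectoid extension, use the map $\mathsf{M}^{\rm coh}(Z^{\an})\to \mathsf{M}^{\rm coh}(T^{\flat\an})^\sharp$ of ring objects from \Cref{cons}, invoke self-duality and the one-dimensionality statement of \Cref{rmk:split} to reduce the splitting to the non-vanishing of the induced map $\mathsf{M}^{\rm coh}(Z^{\an})\to\one(-d)[-2d]$, then realize to Hyodo--Kato (compatibly with tilting, via \Cref{quick}) and quote the equicharacteristic weight-monodromy theorem of Crew and Lazda--P\'al for $T^\flat$. The genuine gap is at the non-vanishing step, which is the actual mathematical content of the argument. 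You assert that, after applying the $\ell$-adic realization, the scalar ``recovers, up to nonzero factors, the degree of the alteration $T^\flat\to Z^\flat$''. This is not justified, and is not correct as stated: $Z$ and $Z^\flat$ (hence $T^\flat$) are not related by any morphism of varieties, only through the perfectoid covers $Y_\infty$, the homeomorphism of their tilts, and Scholze's approximation of the tubes. Consequently the effect of the composite on $H^{2d}$ cannot be read off as a degree; the comparison between top-degree classes of $Z$ and of $Z^\flat$ across the tilt is precisely what has to be proved, and no realization functor lets you bypass it for free (the remark following the paper's proof says only that non-vanishing \emph{may be tested} in a realization, not that it becomes automatic there).

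The missing idea is to route the computation through the toric variety: by commutativity of diagram \eqref{diag}, the composite $\mathsf{M}^{\rm coh}(Z^{\an})\to \mathsf{M}^{\rm coh}(T^{\flat\an})^\sharp\to\one(-d)[-2d]$, precomposed with $\mathsf{M}^{\rm coh}(X^{\an}_\Sigma)\to \mathsf{M}^{\rm coh}(Z^{\an})$, agrees with the map factoring through the equivalence $\mathsf{M}^{\rm coh}(X^{\an}_\Sigma)\simeq \mathsf{M}^{\rm coh}(X^{\flat\an}_\Sigma)^\sharp$ (\Cref{A=Ainf} together with \Cref{cor:semisep} and \Cref{tilt}); since the lower row is the untilt of the analytification of an algebraic map in $\DA(L)$, non-vanishing reduces to the purely algebraic statement that $\mathrm{CH}^d(X^\flat_\Sigma)_\Q\xrightarrow{\iota^!}\mathrm{CH}^d(T^\flat)_\Q\xrightarrow{\deg}\Q$ is nonzero, which follows from $\deg(h^!(c_1(\mathcal{L})^d))\neq 0$ for an ample $\mathcal{L}$ on $X_\Sigma^\flat$. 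Without this detour through $X_\Sigma$ (or an equivalent tracking of cycle classes across the tilt, as in Scholze's $\ell$-adic argument), the assertion that the scalar is nonzero is unsupported; your closing sentence acknowledges the obstacle but does not resolve it. A secondary, smaller point: to quote Crew and Lazda--P\'al you also need to identify the composite realization $\RigDA^{\gr}(L)\to\RigDA^{\gr}(K^\flat)\simeq\RigDA^{\gr}(K)\to\mcD_{\varphi,N}(W(k)[1/p])$ with their overconvergent Hyodo--Kato theory over the equicharacteristic local field $L$; this is exactly what the detection principle \Cref{quick} provides and should be stated, not just the tilting compatibility.
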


\begin{proof}
As before, we may and do replace $K$ with a perfectoid extension, %(the statement being insensitive to purely ramified extensions of $K$), 
and even assume that $Z$ has semi-stable reduction (as we may take any finite extension of $K$).  We let $T^\flat$ be as in \Cref{cons}. We may and do suppose that it is defined over some finite extension $L$ of the field $k(\!(p^\flat)\!)\subseteq K^\flat$ and that $\mathsf{M}^{\rm coh}(T^\flat)\in\RigDA^{\gr}(L)$.

    We then consider the Hyodo--Kato realization functor
    $$
    \RigDA^{\gr}(K)\to\mcD_{\varphi,N}(W(k)[1/p])
    $$
    and observe that:
    \begin{enumerate}
        \item The composite realization 
        $$
         \RigDA^{\gr}(L)\to \RigDA^{\gr}(K^\flat){\simeq}\RigDA^{\gr}(K)\to\mcD_{\varphi,N}(W(k)[1/p])
        $$
        must agree with the Hyodo--Kato realization functor defined by Lazda--Pal \cite[Theorem 2.52]{lazda-pal} by means of \Cref{quick}. 
                \item The map of motives
        $$
        M\colonequals \mathsf{M}^{\rm coh}(Z)\to M'\colonequals \mathsf{M}^{\rm coh}(T^{\flat})^\sharp
        $$
         is a map of ring objects  (since the multiplication is induced by the diagonal map, see the end of \Cref{cons}) and the map $M\to M'\to\one(-d)[-2d]$ fits in the following diagram
        \begin{equation}
         \label{diag}
        \xymatrix{
        \mathsf{M}^{\rm coh}(X^{\an}_{\Sigma})\ar[d]^{\sim}\ar[r] & M\ar[d]\ar[dr]\\
        \mathsf{M}^{\rm coh}(X^{\flat\an}_{\Sigma})^\sharp\ar[r] & M'\ar[r] & \one(-d)[-2d]
        }
        \end{equation}
       \item  %If the diagonal map was zero, then so would the 
       The lower horizontal map  is the (untilt of the) analytification of the algebraic map
        $$
        \mathsf{M}^{\rm coh}(X_\Sigma^\flat)\to \mathsf{M}^{\rm coh}(T^\flat)\to\one(-d)[-2d]
        $$
        in $\DA(L)$. All the motives above are dualizable,  and the analytification functor on motives is monoidal. We then deduce a diagram (where we denote by $\{n\}$ the operation $(n)[2n]$, for brevity):
        $$\xymatrix{
        \Hom_{\DA(L)}(\one\{-d\},\mathsf{M}^{\rm coh}(X^\flat_\Sigma))\simeq  CH^d(X^\flat_\Sigma)_\Q \ar[r]\ar[d]^{\iota^!}&    \Hom_{\RigDA(L)}(\one\{-d\},\mathsf{M}^{\rm coh}(X^{\an\flat}_\Sigma)) \ar[d] \\
        \Hom_{\DA(L)}(\one\{-d\},\mathsf{M}^{\rm coh}(T^\flat))\simeq  CH^d(T^\flat)_\Q \ar[r]\ar[d]& \Hom_{\RigDA(L)}(\one\{-d\},\mathsf{M}^{\rm coh}(T^{\an\flat}))\ar[d]\\
        \Hom_{\DA(L)}(\one\{-d\},\one\{-d\})\ar[d]^{\sim}\ar[r] &\Hom_{\RigDA(L)}(\one\{-d\},\one\{-d\})\ar[d]^\sim\\
       \Q \ar[r]^\sim& \Q
      }  $$
      where $\iota$ is the inclusion $T^\flat \subset X_\Sigma^\flat$. 
         Note that the left column is a non-zero map. Indeed, the composition
         \[ \mathrm{CH}^d(X_\Sigma^\flat) \xrightarrow{\iota^!} \mathrm{CH}^d(T^\flat) \xrightarrow{\deg} \Z\]
        % induced on Chow groups by $\mathsf{M}^{\rm coh}(Y)\to \mathsf{M}^{\rm coh}(Z)\in\DA(\kappa)$
        is always non-zero (it suffices to notice that $\deg (h^!(c_1(\mathcal{L})^d))\neq 0$ where $\mathcal{L}$ is an ample line bundle on $X_\Sigma^\flat$   by \cite[Lemma 12.1]{fulton}).
        
        As a consequence, we deduce that  the composite map in the right column  is non-zero as well. In particular, the lower horizontal map of the diagram \ref{diag} is non-zero, hence so is the diagonal map.
        %This is contradiction with the presence of a non-zero cycle class map on, say, the $\ell$-adic or Hyodo--Kato cohomology. In fact,  for any finite morphism  $h\colon Z\to Y$ of projective smooth schemes over any field $\kappa$, with $Z$ of dimension $d$, the morphism
        % \[ \mathrm{CH}^d(Y) \xrightarrow{h^!} \mathrm{CH}^d(Z) \]
        % induced on Chow groups by $\mathsf{M}^{\rm coh}(Y)\to \mathsf{M}^{\rm coh}(Z)\in\DA(\kappa)$ is always non-zero (it suffices to notice that $h^!(c_1(\mathcal{L})^d)$ where $\mathcal{L}$ is an ample line bundle on $Y$ has a non-zero degree by \cite[Lemma 12.1]{fulton}). This implies in particular that the composite morphism
        % \[ \mathsf{M}^{\rm coh}(X^\flat_{\Sigma})^\sharp \to M'\to  \one(-d)[-2d]\]
        % is non-zero in $\RigDA(L)$, since for any mixed Weil cohomology theory defined on $L$ which factors over the analytification functor, ...
        
      %  $$
       % CH^d(X^\flat_{\Sigma})\to CH^d(Z)\to \Z 
        %$$.
        % \alberto{algebricamente sarà ovvio per Chow. In RigDA? }. 
%        
        
        % \item The maps of $(\varphi,N)$-modules
        % $$
        % H^*_{HK}(Z)\to H^*_{HK}(T^{\flat})
        % $$
        % are compatible with cup-products (see the end of \Cref{cons}) and are non-zero in top degree $2d$: otherwise (see the diagram in \Cref{cons}) the map $H^{2d}_{HK}(X^\flat_\Sigma)\to H^{2d}_{HK}(T^\flat)$ would be zero, contradicting the presence of non-zero cycle-class maps. By \Cref{rmk:split} we then deduce that $\mathsf{M}^{\rm coh}(X)$ is a direct summand of $M(T^\flat)^\sharp$, and so are its Hyodo--Kato cohomology groups (that are $(\varphi,N)$-modules). 
    \end{enumerate}
    We then deduce by  \Cref{rmk:split} that $\mathsf{M}^{\rm coh}(X)$ is a direct summand of $\mathsf{M}^{\rm coh}(T^\flat)^\sharp$, and so are its Hyodo--Kato cohomology groups (that are $(\varphi,N)$-modules).  We can then conclude that $H_{HK}^*(X)$ satisfies the weight-monodromy conjecture, since $H_{HK}^*(T^\flat)$ does: this is a result by Crew \cite{Crew} and Lazda--Pal \cite[Theorem 5.58]{lazda-pal}, that builds on a proof of Deligne \cite{weil2}. %\federico{non dobbiamo citare Lazda-Pal?}
\end{proof}

\begin{rmk}%\label{rmk:split}
    In the previous proof, one can test the ``zero-ness'' of the map $M\to \one(-d)[-2d]$ under any given realization functor. In particular, using the notation above, if one shows that the induced map $H^{2d}(X)\to H^{2d}(M')\simeq \one(d)$ is non-zero for some cohomological realization, then one can deduce that $M=\mathsf{M}^{\rm coh}(X)$ is a direct summand of $M'$. This is the approach taken in \cite{scholze} (and in \cite{pWM}).
\end{rmk}

 \subsection{On the Clemens-Schmid sequence} The motivic nearby cycle can also be used to give a uniform treatment (in the $\ell$-adic, $p$-adic and in the complex analytic setting) of the so-called local invariant cycle theorem. 
 
 Let us begin by casting the problem in the complex analytic case. 
   Let $f\colon X\to S$ be a proper morphism between smooth complex analytic spaces, where $S$ has dimension one. Localizing around an analytic neighborhood of $0\in S(\C)$ and passing to a finite cover, we may replace $S$ by an open disc $\Delta$, suppose that the central fiber $Y=f^{-1}(0)$ is a normal crossing divisor in $X$, and that $f$ is smooth away from zero. Write $X_t$ for the (smooth) fiber of $f$ at a point $t\neq0$. Since the restriction of $f$ to $\Delta^*$ is topologically a locally trivial bundle, a generator of the fundamental group $\pi_1(\Delta^*)\simeq \Z$ acts on the vector space $H^q(X_t, \Q)$ by an operator $N$, which is in fact nilpotent. As such, it induces a canonical filtration $M$, for which $N^r$ restricts to an isomorphism $\gr^M_{r} H^q(X_t, \Q)\to \gr^M_{-r}H^q(X_t, \Q)$ The morphism $N$ can be promoted to a morphism of mixed Hodge structures when the vector space $H^q(X_t, \Q)$ is  equipped with Steenbrink's limit Hodge structure $H^n_{\lim}(X, \Q(m))$ from \cite{steenbrink}. %, it is defined in the derived category and its (homotopy) fiber captures the cohomology of the singular space $Y$. 
   The kernel of the monodromy operator captures some cohomological information of the singular space $Y$.  More precisely, one has the following result:
  \begin{thm}[Local invariant cycle theorem {\cite[3.7]{clemens}}]\label{thm:local_invariant_cycle}Assume that $X$ is a  K\"ahler manifold. The sequence
  \begin{equation}\label{eq:loc_cyc}H^n(Y, \Q(m)) \xrightarrow{\mathrm{sp}} H^n_{\lim}(X, \Q(m)) \xrightarrow{N} H^n_{\lim}(X, \Q(m-1))\end{equation}
    where $\mathrm{sp}$ is the canonical specialization morphism, is exact in the category of mixed Hodge structures. %Here, $H^n(X_t, \Q(m))$ is equipped with the limit Hodge structure of Steenbrink  \cite{steenbrink}. 
  \end{thm}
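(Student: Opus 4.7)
The plan is to realize the sequence \eqref{eq:loc_cyc} as the image, under the Hodge realization, of a canonical motivic fiber sequence provided by the motivic nearby cycle functor of \Cref{thm:bgv}, and then to invoke the Kähler hypothesis to obtain exactness at the level of mixed Hodge structures.

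First, I would set up the motivic picture. After restricting $f$ to the punctured disc $\Delta^*$ and choosing a local coordinate, view the family $X|_{\Delta^*}$ as a smooth proper rigid analytic variety $X_\eta$ over the non-archimedean field $C$, the completed algebraic closure of $\C(\!(t)\!)$ (whose residue field is $k=\C$). By \Cref{thm:bgv} together with \Cref{rmk:identifadj}, the cohomological motive $\mathsf{M}^{\rm coh}(X_\eta)$ corresponds to a pair $(\Psi^{\rm coh} X, N)$ in $\DA(\C)_N$, and the closed immersion $i\colon Y\hookrightarrow X$ yields a motivic specialization morphism $\mathrm{sp}\colon \mathsf{M}^{\rm coh}(Y)\to \Psi^{\rm coh} X$ whose composition with $N$ vanishes: indeed, $\mathrm{sp}$ factors through $\fib(N)$ by the adjunction $(N=0)\dashv \fib(N)$ of \Cref{rmk:N=0}.

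Next, I would apply the Hodge realization $\DA(\C)\to \Ind\, \mathcal{D}^b(\MHS^p_\Q)$ of \Cref{ex:Betti}. Arguing as in \Cref{quick}, the compatibility of $\Psi$ with this realization is controlled by the image of the Kummer motive $\mcK$, and matching it against Steenbrink's construction identifies the realization of $(\Psi^{\rm coh} X, N)$ with the limit mixed Hodge structure $H^n_{\lim}(X,\Q(m))$ equipped with its classical monodromy operator. The motivic sequence $\mathsf{M}^{\rm coh}(Y)\xrightarrow{\mathrm{sp}} \Psi^{\rm coh} X \xrightarrow{N} \Psi^{\rm coh} X(-1)$ thus produces exactly \eqref{eq:loc_cyc} in the bounded derived category of MHS.

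Finally, I would prove exactness at the middle term. One inclusion ($N\circ \mathrm{sp}=0$) is formal from the adjunction used above. For the converse $\ker N\subseteq \mathrm{Im}(\mathrm{sp})$, I would invoke the Kähler hypothesis: by Deligne's theorem, on a Kähler degeneration the monodromy filtration $M$ coincides (up to a shift by $n$) with the weight filtration $W$ on $H^n_{\lim}$, so $\ker N$ has weights $\leq n$, and strictness of morphisms of MHS forces this weight-$\leq n$ subobject to come from the pure-of-weight-$\leq n$ object $H^n(Y,\Q(m))$ via $\mathrm{sp}$. I expect this last step to be the main obstacle: it cannot be established purely motivically and ultimately requires either a direct appeal to the classical Clemens--Schmid long exact sequence of \cite{clemens,schmid,steenbrink} or, equivalently, the Hodge-theoretic Kähler package through Saito's formalism of mixed Hodge modules.
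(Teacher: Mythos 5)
A point of comparison first: the paper does not prove this statement at all --- it is quoted verbatim as a classical theorem of Clemens (attributed in a later remark to Deligne in the algebraic setting and to Saito for the mixed-Hodge-module version), and it is used as an input to, not an output of, the motivic machinery. Your motivic framing of the sequence is nevertheless faithful to what the paper does later: the specialization map and the monodromy do arise from the nearby-cycle equivalence of \Cref{thm:bgv} (compare the diagram \eqref{eq:calotte} and the corollary following it), and the vanishing $N\circ\mathrm{sp}=0$ is indeed formal from the adjunction of \Cref{rmk:N=0}. But the paper is explicit that this construction only yields a \emph{complex} of mixed Hodge structures; the exactness is precisely the non-formal content, which in the paper's own discussion is part of the classical Clemens--Schmid package together with the Hodge-theoretic weight-monodromy theorem, and is not recovered by the motivic formalism.

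The genuine gap is your middle step. Knowing that $\ker N\subseteq H^n_{\lim}(X,\Q(m))$ has weights $\leq n$ and invoking strictness of morphisms of mixed Hodge structures does not force $\ker N$ to lie in the image of $\mathrm{sp}$: strictness constrains the weight filtration of the image of a \emph{given} morphism, it does not produce surjectivity onto a weight-bounded subobject; moreover $H^n(Y,\Q(m))$ for the normal crossing fiber $Y$ is itself mixed of weights $\leq n$, not pure, so no purity argument is available either. The classical proofs (Clemens' topological argument, Deligne's algebraic argument, Steenbrink's comparison of the weight spectral sequence of the degeneration with the one computing $H^*(Y)$, or Saito's formalism) all combine the weight-monodromy input with a concrete spectral-sequence or duality comparison, not with strictness alone. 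Since you end by conceding that this step must cite Clemens--Schmid or Saito anyway, your proposal is best read as a correct contextualization of the statement --- which is in effect how the paper treats it --- rather than an independent proof; as a proof it is incomplete at exactly the point you flag.
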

The specialization morphism arises as follows. Let $\overline{X}^*$ be the base change of $X^* = X\times_\Delta \Delta^*$ to a universal cover $\overline{\Delta^*} \to \Delta^*$. Then for $t\neq 0$ we have $H^*(\overline{X}^*, \Q) \simeq H^*(X_t, \Q)$ as $\Q$-vector spaces, and there is a natural morphism  $H^n(Y, \Q(m)) \to H^n(\overline{X}^*, \Q(m))$ by localization. The main content of the local invariant cycle theorem is that this is indeed a morphism of mixed Hodge structures when $H^*(X_t, \Q)$ is equipped with the limit Hodge structure $H^n_{\lim}(X, \Q(m))$, and that the kernel of the monodromy map $N$ has the correct weights. 

Using Poincaré duality one can  in fact  complete the exact sequence \eqref{eq:loc_cyc} to the left and to the right and obtain a long exact sequence:
\begin{thm}[Clemens-Schmid sequence {\cite{morrison}, \cite{clemens}}]\label{thm:CS}Assume that $X$ is a  K\"ahler manifold. Then there are (two) long exact sequences 
%{\small
%    \[
%  H^n(Y, \Q(m))   \xrightarrow{\mathrm{sp}} H^n(X_t, \Q(m)) %\xrightarrow{N} H^n(X_t, \Q(m-1)) \to  H_{2d-n}(Y, \Q(m-d-1)) \to %H^{n+2}(Y, \Q(m-1)) 
%    \]}
    \[
    \begin{tikzcd}
       \dots \to  H^n(Y, \Q(m)) \arrow[r, "\mathrm{sp}"] & H^n_{\lim}(X, \Q(m)) \arrow[r, "N"]
             \ar[draw=none]{d}[name=X, anchor=center]{}
    & H^n_{\lim}(X, \Q(m-1)) \ar[rounded corners,
            to path={ -- ([xshift=2ex]\tikztostart.east)
                      |- (X.center) \tikztonodes
                      -| ([xshift=-2ex]\tikztotarget.west)
                      -- (\tikztotarget)}]{dll}[at end]{} \\      
  H_{2d-n}(Y, \Q(m-d-1)) \rar & H^{n+2}(Y, \Q(m-1))  \arrow[r, "{\mathrm{sp}}"] & H^{n+2}_{\lim}(X, \Q(m-1))\to \dots
    \end{tikzcd}
    \]
 of mixed Hodge structures. 
    \end{thm}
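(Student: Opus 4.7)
The plan is to realize the Clemens--Schmid sequence as the Hodge realization of a canonical distinguished triangle of algebraic motives in $\DA(\C)$, obtained from the motivic formalism of nearby cycles and monodromy.

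First, after pulling back to a formal neighbourhood of $0\in\Delta$ and possibly passing to a ramified cover, we may assume $f\colon X\to\Delta$ has semistable reduction with special fiber $Y$, and work with the motive $M=\mathsf{M}^{\rm coh}(X_\eta)\in\RigDA(K)$ over $K=\C(\!(t)\!)$.  By \Cref{prop:semiss_is_gr}, $M\in\RigDA(K)^{\gr}$, so via \Cref{thm:bgv} it corresponds to a pair $(\Psi M, N)\in\DA(\C)_N$, where $\Psi M$ is the motivic nearby cycle and $N\colon \Psi M\to\Psi M(-1)$ is the motivic monodromy.  Evaluating the adjunction $(N=0)\dashv\fib(N)$ of \Cref{rmk:N=0} at $(\Psi M,N)$ yields a fiber sequence in $\DA(\C)$
\[
\fib(N)\longrightarrow \Psi M\xrightarrow{\ N\ }\Psi M(-1).
\]
The crucial geometric input, addressed below, is the identification $\fib(N)\simeq \mathsf{M}^{\rm coh}(Y)$ in $\DA(\C)$, which gives the first arrow the interpretation of the specialization map $\mathrm{sp}$.

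Second, I would apply the Hodge realization.  By the detection principle of \Cref{quick}, adapted to the complex analytic setting along the lines of \cite{Tubach}, the classical Hodge realization $\DA(\C)\to \Ind\mathcal{D}^b(\MHS^p_\Q)$ extends canonically to a realization $\RigDA^{\gr}(C)\simeq\DA(\C)_N\to(\Ind\mathcal{D}^b(\MHS^p_\Q))_N$ whose value on $M$ is precisely Steenbrink's limit mixed Hodge structure $H^*_{\lim}(X,\Q(m))$ together with its monodromy.  Applied to the fiber sequence above, this produces the long exact sequence
\[
\cdots\to H^n(Y,\Q(m))\xrightarrow{\mathrm{sp}} H^n_{\lim}(X,\Q(m))\xrightarrow{\ N\ } H^n_{\lim}(X,\Q(m-1))\to H^{n+1}(Y,\Q(m))\to\cdots
\]
of mixed Hodge structures, which already contains \Cref{thm:local_invariant_cycle}.

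Third, to splice in the homology term $H_{2d-n}(Y,\Q(m-d-1))$ and complete the Clemens--Schmid sequence, I would exploit Poincaré duality.  Since $X_\eta$ is smooth and proper of dimension $d$, $M$ is strongly dualizable with $M^\vee\simeq M(-d)[-2d]$; as the equivalence of \Cref{thm:bgv} is monoidal, this transports to a self-duality $(\Psi M)^\vee\simeq \Psi M(-d)[-2d]$ under which $N^\vee$ is identified with $\pm N$ up to the twist and shift.  Dualizing the fiber sequence of the first paragraph and rotating, and using that the Hodge realization of $\mathsf{M}^{\rm coh}(Y)^\vee$ recovers the singular homology $H_*(Y)$ of the proper variety $Y$ with its Deligne mixed Hodge structure, yields (after the appropriate Tate twist and shift) a second long exact sequence
\[
\cdots\to H^n_{\lim}(X,\Q(m-1))\to H_{2d-n}(Y,\Q(m-d-1))\to H^{n+2}(Y,\Q(m-1))\xrightarrow{\mathrm{sp}} H^{n+2}_{\lim}(X,\Q(m-1))\to\cdots
\]
Splicing the two sequences produces the Clemens--Schmid sequence.

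The central obstacle is the motivic identification $\fib(N)\simeq \mathsf{M}^{\rm coh}(Y)$: this is what endows the specialization map with its geometric meaning and makes the spliced sequence involve the actual (co)homology of the singular central fiber.  The most direct route is via the explicit weight-complex description of $\chi\mathsf{M}^{\rm coh}(X_\eta)$ in terms of the intersections $D_I=\bigcap_{i\in I}D_i$ of the irreducible components of $Y$ (the description used in the proof of \Cref{rmk:split}), which should match the Mayer--Vietoris resolution computing $\mathsf{M}^{\rm coh}(Y)$; alternatively, Park's comparison \cite{ParkPsi} between the motivic nearby cycle $\Psi$ and its log-geometric counterpart $\Psi^{\rm log}$ gives the identification in the equi-characteristic zero setting, where the analogous statement about ``invariants of log monodromy'' is standard.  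The K\"ahler hypothesis enters only at the realization stage, to ensure that the limit Hodge filtration yields a genuine morphism of MHS and that the resulting sequence is strict for the weight filtration.
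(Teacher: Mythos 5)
Your central identification is not correct, and the error hides exactly the point where the theorem stops being formal. Under the equivalence of \Cref{thm:bgv} and \Cref{rmk:identifadj}, the functor $\fib(N)$ is the right adjoint $\chi$ of $\xi$, and by \cite[Corollary 3.8.18, Theorem 3.8.19]{agv} one has $\chi\,\An^*j^*\simeq \chi^{\rm alg}j^*=\iota^*j_*j^*$. So for $M=\mathsf{M}^{\rm coh}(\mcX_K^{\An})$ the fiber of the motivic monodromy is \emph{not} $\mathsf{M}^{\rm coh}(Y)=\iota^*\mathsf{M}^{\rm coh}(\mcX)$ but the motive of the punctured tubular neighborhood; its Hodge realization is $H^*(X^*)\simeq H^*(\iota^*Rj_*\Q)$, not $H^*(Y)$ (already for the Tate curve these differ). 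Realizing your fiber sequence $\fib(N)\to\Psi M\xrightarrow{N}\Psi M(-1)$ therefore gives the Wang exact sequence in $H^*(X^*)$, not the Clemens--Schmid sequence. The terms $H^n(Y)$ and $H_{2d-n}(Y)$ enter only through the \emph{localization} triangle $\iota^!\to\iota^*\to\chi^{\rm alg}j^*$ (together with absolute purity/duality identifying the realization of $\iota^!$ with homology of $Y$), and Clemens--Schmid is obtained by splicing this triangle with the monodromy triangle, i.e.\ from the octahedron \eqref{eq:calotte}. Splicing two triangles does not produce a distinguished triangle, so this construction only yields a \emph{complex} of mixed Hodge structures, which is all the motivic formalism gives (and all the paper claims at that stage).

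This mismatch is not cosmetic: exactness of the Clemens--Schmid sequence is a genuine theorem that fails for general proper families of complex analytic spaces (Nakkajima, and Clemens's original discussion), so any argument that derives it formally from a single fiber sequence plus a realization functor — with the K\"ahler hypothesis entering only to make the maps morphisms of MHS, as you propose — proves too much. In the paper the exactness is deduced separately: by duality it suffices to check exactness at the spot $H^n_{\lim}(X,\Q(m))$, and there it follows from the Hodge-theoretic weight--monodromy theorem (coincidence of the weight and monodromy filtrations on $H^n_{\lim}$, so that $\ker N$ has weights $\leq n$), which is where the K\"ahler hypothesis is genuinely used. Your outline is a correct route to the existence of the maps $\mathrm{sp}$ and $N$ and of the Clemens--Schmid \emph{complex} (essentially the paper's construction via \eqref{eq:calotte}), but it is missing the entire argument for exactness, and the step $\fib(N)\simeq\mathsf{M}^{\rm coh}(Y)$ that your splicing relies on is false.
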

The local invariant cycle theorem is then part of   the exactness of Clemens-Schmid sequence. Note that by duality the exactness at the spot $H^n_{\lim}(X, \Q(m))$ is essentially enough to prove exactness at the remaining terms (see the remark at \cite[p.~229]{clemens}). In turn, this is a consequence of the complex analytic version of the weight-monodromy conjecture. 
\begin{thm}[Monodromy-Weight, Hodge version {\cite[Th\'eor\`eme 5.2]{g-na}}]%\alberto{ref to Saito?}
    In the above assumptions, for each $r\geq 0$ and each $n$, the morphism $N^r\colon H^n_{\lim}(X, \Q)\to H^{n}_{\lim}(X, \Q(-r))$ induces an isomorphism  of pure Hodge structures
    \[
    \mathrm{gr}^W_{n+r} H^n_{\lim}(X, \Q)\xrightarrow{\simeq}  \mathrm{gr}^W_{n-r} H^n_{\lim}(X, \Q)
    \]
    where $W$ is the weight filtration of the Hodge structure $H^n_{\lim}(X, \Q)$. Equivalently, the weight and the monodromy filtrations agree (up to a shift by $n$).
\end{thm}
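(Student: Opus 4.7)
The plan is to apply the Hodge realization of \Cref{quick} to the motivic nearby cycle $\Psi X \in \DA(\C)_N$ and to exploit the motivic weight structure in order to reduce the conjecture to the classical hard Lefschetz theorem on the smooth closed strata of the special fiber. Since the statement is \'etale-local on the base and compatible with finite ramified base changes of $\Delta$, we may assume (as in the hypothesis) that $f \colon X \to \Delta$ has semistable reduction, so that $Y = f^{-1}(0) = \bigcup_i Y_i$ is a reduced simple normal crossing divisor in the K\"ahler manifold $X$ with smooth projective strata $Y_I = \bigcap_{i \in I} Y_i$.

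First, I would compute the motivic weight complex of $(\Psi X, N) \simeq \chi \mathsf{M}(X_\eta)$, following the equi-characteristic zero analog of \cite[Example 4.72]{bgv} (alternatively via the log-motivic description of $\Psi$ recalled from \cite{ParkPsi}). This yields an explicit complex whose terms are of the form $\bigoplus_{|I| = a} \mathsf{M}(Y_I)(b)[2b]$ for suitable $a, b \in \Z$, with differentials assembled from the restriction and Gysin maps attached to the inclusions $Y_I \hookrightarrow Y_J$. Applying the Hodge realization term-by-term recovers the Steenbrink weight spectral sequence converging to the limit mixed Hodge structure $H^n_{\lim}(X, \Q)$, whose $E_1$-page is a direct sum of pure Hodge structures of known weight coming from the smooth projective $Y_I$'s. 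Under the identification $\DA(\C)_N \simeq \Mod_{\one \oplus \one(-1)[-1]}(\DA(\C))$ from \Cref{rmk:N=0}, the action of $N$ corresponds to the module action of the generator $\one(-1)[-1]$, which at the level of the weight complex becomes the identity map shifting the grading by one column with a Tate twist. After passing to $E_2 = E_\infty$ (the degeneration being Deligne's classical input), the iterate $N^r$ is identified, up to sign, with the $r$-fold Lefschetz operator on each stratum $Y_I$, defined as cup product with powers of the first Chern class of an ample line bundle on $X$ restricted to $Y_I$.

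The main obstacle is the rigorous identification of the motivic monodromy with the Lefschetz operator on each stratum; once this identification is in place, the theorem reduces to classical hard Lefschetz on each smooth projective K\"ahler $Y_I$, which produces the isomorphisms $\gr^M_r H^q(Y_I, \Q) \xrightarrow{\sim} \gr^M_{-r} H^q(Y_I, \Q)$ and hence the desired $\gr^W_{n+r} H^n_{\lim}(X, \Q) \xrightarrow{\sim} \gr^W_{n-r} H^n_{\lim}(X, \Q)$. This last step is where the K\"ahler hypothesis enters essentially, since hard Lefschetz fails for general non-K\"ahler compact complex manifolds. One could alternatively attempt to mimic the Scholze-style argument of \Cref{original}, seeking a direct-summand embedding of $\mathsf{M}^{\rm coh}(X)$ into the motive of an auxiliary variety where weight-monodromy is known a priori; however, since $\C(\!(t)\!)$ does not admit a characteristic $p$ tilt, the perfectoid strategy of \Cref{sec:wm} is not available here, and one is forced to proceed via the weight complex and hard Lefschetz.
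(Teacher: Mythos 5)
Your proposal has a genuine gap at its central step, the claim that after passing to the $E_2$-page the iterate $N^r$ ``is identified, up to sign, with the $r$-fold Lefschetz operator on each stratum $Y_I$'', so that the theorem ``reduces to classical hard Lefschetz on each $Y_I$''. This identification is not correct. On the Steenbrink weight spectral sequence the monodromy acts on the $E_1$-page as (a sign times) the identity map, with a Tate twist, between matching direct summands $H^q(Y^{(m)},\Q)$ of different bidegrees; cup product with a K\"ahler class $L=c_1(\omega)\cup(-)$ is a second, independent operator. What has to be proved is that $N^r$ induces an isomorphism $E_2^{-r,\,n+r}\to E_2^{r,\,n-r}(-r)$, i.e.\ an isomorphism between cohomologies of the $E_1$-complex, and this does not follow stratum-by-stratum from hard Lefschetz alone: the standard argument requires the $E_1$-term, equipped with both commuting operators $N$ and $L$, to be a \emph{polarized} bigraded Lefschetz module — hard Lefschetz together with the Hodge--Riemann bilinear relations on the $Y_I$ — followed by the $\mathfrak{sl}_2\times\mathfrak{sl}_2$ linear-algebra argument of Guill\'en--Navarro Aznar (alternatively Saito's mixed Hodge modules \cite{saito-mdhp}, or Schmid's $SL(2)$-orbit theorem \cite{schmid} after comparing with Steenbrink's construction). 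This is precisely the delicate point where Steenbrink's original argument \cite{steenbrink} was incomplete, and it is why the paper does not prove the statement at all but quotes it as classical input from \cite{g-na}; your reduction silently skips it. (By contrast, the $E_2$-degeneration you invoke is the easy part: the differentials $d_r$ for $r\geq 2$ are morphisms between pure Hodge structures of distinct weights, hence vanish.)

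There is also a generality mismatch. The theorem is stated for a proper K\"ahler family of complex analytic spaces: $X$ and the strata $Y_I$ are compact K\"ahler manifolds which need not be algebraic, so the motivic inputs you propose — $\Psi X\in\DA(\C)_N$, the weight complex as in \cite[Example 4.72]{bgv}, the realization of \Cref{quick}, or the log-motivic description via \cite{ParkPsi} — are simply not available in this setting; they would at best cover families algebraic (or rigid-analytic) over $\C(\!(t)\!)$. Hard Lefschetz and Hodge--Riemann do hold for compact K\"ahler $Y_I$, but then the whole argument must be run on the Hodge-theoretic side, as in \cite{g-na}, and the motivic detour adds nothing. Your closing observation that the perfectoid/tilting strategy of \Cref{sec:wm} is unavailable over $\C(\!(t)\!)$ is correct, but the alternative you propose is not yet a proof for the reasons above.
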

For the proof of the exactness of the Clemens-Schmid sequence, it is enough to know that the kernel of the monodromy $N$ on $H^i_{\lim}(X)$ has weights $\leq i$. But this is precisely guaranteed by the concidence of the weight filtration and of the monodromy filtration on $H^i_{\lim}(X)$. 
% More precisely, let $\overline{X}^*$ be the base change of $X^* = X\times_\Delta \Delta^*$ to a universal cover $\overline{\Delta^*} \to \Delta^*$. Then for $t\neq 0$ we have $H^*(\overline{X}^*, \Q) \simeq H^*(X_t, \Q)$ as $\Q$-vector spaces, and $H^*(\overline{X}^*, \Q$ is equipped with a mixed Hodge structure satisfying
%\[
%\mathrm{gr}^W_{n+r} H^n(\overline{X}^*, \Q) 
%\]

  \begin{rmk}\Cref{thm:local_invariant_cycle} is due to Deligne (see \cite{Deligne_hodgeI}) in the algebraic setting, and to Clemens in the stated generality. The most general approach using the theory of mixed Hodge modules is due to Saito \cite{saito-mdhp} (see \cite{illusie} for an extensive discussion). Note that the assumptions are optimal: as remarked by Nakkajima \cite{Nakka} (see also the discussion in the original paper of Clemens \cite{clemens}), the statement is false for arbitrary proper families of complex analytic spaces. From the motivic point of view, this implies that a generalization of the local invariant cycle theorem cannot hold for arbitrary objects of $\RigDA(\C(\!(t)\!))^{\rm gr}$.
\end{rmk}
\begin{rmk} Following the approach of Deligne in the proof of the $\ell$-adic weight-monodromy conjecture (see the proof of \cite[Théorème 1.8.4]{weil2}), having an estimate for the weights of the kernel of the monodromy operator on $H^i$, together with a product formula, is sufficent to establish the full conjecture. For this reason, we expect that the exactness of the Clemens-Schmid sequence for the variety $X$ and its self-products is in fact equivalent to the weight-monodromy conjecture.
\end{rmk}
From our perspective, all of the above arises more naturally as realization of the monodromy action on the motivic nearby cycle. To explain this, let us consider the following general setting. 
\begin{cons} Let $K$ be a complete non-archimedean field with a perfect residue field and a value group of (rational) rank $1$. 
			Let $k$ be its residue field and denote by $\mcO_K$ the valuation ring. Let us now write $\DA(\mcO_K)^{\rm gr}$ [resp. $\DA(K)^{\rm gr}$] for the full subcategory of $\DA(\mcO_K)$ [resp. $\DA(K)^{\rm gr}$] defined as the inverse image of $\RigDA(K)^{\rm gr}$ along the composite
                        \[\DA(\mcO_K)\xrightarrow{j^*} \DA(K) \xrightarrow{\An^*} \RigDA(K). \]
                        [resp. $\An^*$]. 
This category in particular contains the motive of any regular scheme $\mathcal{X}$, flat over $\mcO_K$ and with strict semi-stable reduction over $k$.  We then have:
           \[ \begin{tikzcd}
                & \DA^{\gr}(\mcO_K)\arrow[d, "j^*"
]\\
& \DA^{\gr}(K) \arrow[d, "\An^*"] \arrow[rdd, bend left=35, "\chi^{\rm alg}"] \arrow[ldd, bend right=35, "\Psi^{\alg}"']\\
             & \RigDA^{\gr}(K)\arrow[d, "\sim"
]\arrow[rd, bend left=15, "\chi"] \arrow[ld, bend right=15, "\Psi"'] \\
            \DA(k) & \DA_N(k)\arrow[l, "\pi"'] \arrow[r, "\fib N
"] &\DA(k)
            \end{tikzcd}\]
            where $\chi^{\rm alg}$ is the motivic algebraic specialization functor $\iota^*j_*$, and $\chi$ is the motivic analytic specialization functor, right adjoint to the generic fiber functor $\xi$.  The commutations on the left are by definition, while on the right they follow from  \cite[Corollary~3.8.18 and Theorem 3.8.19]{agv}, and \Cref{rmk:identifadj}.
            
            % Note also that in the equivalence $\RigDA(K)^{\rm gr}\xrightarrow{\Psi} \DAN$ of \Cref{thm:DA_N_is_RigDA}, the functor $\chi(-)$ applied to an object $M\in \RigDA(K)^{\rm gr}$ coincides with $\fib(N)$ applied to $\Psi(M)$. 
            %             The remaining displayed functors are the canonical arrow from algebraic motives over $\mcO_K$ to formal motives over $\Spf(\mcO_K)$. 

                        Combining the localization sequence of \Cref{item:loc}-(v) with the previous discussion we obtain the following commutative diagram of natural transformation of functors from $\DA(\mcO_K)^{\rm gr}$ to $\DA(k)$

\begin{equation}\label{eq:calotte}
\begin{tikzcd}
               \iota^*
			\arrow[dd, dashed, "+1"]
			\ar[dr]
			&&
                \iota^!
                \ar[ll]
\\
& \chi^{\mathrm{alg}} j^*\ar[ur,"+1"]\ar[dl,"+1"]
\\
\Psi \An^* j^*[-1]\ar[rr,"N"]&&\Psi \An^*j^*(-1)[-1]\ar[ul]\ar[uu,"+1",dashed]
		\end{tikzcd}
\end{equation}
 \end{cons}We can  compose the square above with any realization functor $R\Gamma\colon\DA(k)\to \catD$   to obtain a Clemens--Schimd style complex. More precisely, we have the following result. 

 % Let $R\Gamma\colon\DA(k)\to \catD$ be a monoidal functor of presentable stable \icats, and assume that $\catD$ is endowed with a t-structure compatible with tensor products and with heart~$\mcA$. Let $\widehat{R\Gamma}$ be the canonical extension of \eqref{eq:def_Fhat}, following \Cref{thm:DA_N_is_RigDA}. 
\begin{cor}[{\cite[Remark 4.68, Corollary 4.70]{bgv}}]Let $R\Gamma\colon\DA(k)\to \catD$ be a monoidal functor of presentable stable \icats, and assume that $\catD$ is endowed with a t-structure compatible with tensor products and with heart~$\mcA$. Let $\widehat{R\Gamma}$ be the canonical extension of \eqref{eq:def_Fhat}, following \Cref{thm:DA_N_is_RigDA}. 

                             Let  $\mcX$ be a flat, proper, regular scheme of essential relative dimension $d$ over $\mcO_K$ which is generically smooth and with polystable  special fiber $Y$ over $k$ (so that $\An^*j^*\mathsf{M}(\mcX)= \mathsf{M}(\mcX_K^{\An}) \in \RigDA(K)^{\rm gr}$ by \Cref{prop:semiss_is_gr}). %
						Then there are (two) chain complexes in $\mcA$:
						\[
						\cdots \to \Hm^*(Y)\to \widehat{\Hm}{}^*(\mcX_K^{\An})\xto{N} \widehat{\Hm}{}^*(\mcX_K^{\An})(-1) \to \Hm_{2d-*}(Y)(-d-1) \to  \Hm^{*+2}(Y)\to\cdots 
						\]
					where we let $\Hm_*(M)$ resp. $\widehat{\Hm}_{*}(M)$   be the objects $\Hm_*R\Gamma(M)$ resp. $\Hm_*\pi\widehat{R\Gamma}(M)\cong \Hm_*R\Gamma(\Psi M)$  and we let  $\Hm^{*}(M)$ resp. $\widehat{\Hm}{}^{*}(M)$ be their duals.
\end{cor}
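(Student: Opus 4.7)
The plan is to apply the realization $R\Gamma$ together with its monodromy-sensitive extension $\widehat{R\Gamma}$ from \eqref{eq:def_Fhat} to the hexagonal diagram \eqref{eq:calotte} of natural transformations, evaluated at the motive $\mathsf{M}^{\rm coh}(\mcX)\in\DA^{\gr}(\mcO_K)$. The three distinguished triangles encoded in the hexagon descend to three distinguished triangles in $\catD$, which by the t-structure hypothesis produce three long exact sequences in $\mcA$. Splicing these along shared vertices yields the claimed chain complexes; the fact that consecutive arrows compose to zero is automatic, since in any distinguished triangle $A\to B\to C\to A[1]$ the composite $A\to B\to C$ vanishes.

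The substance of the argument lies in identifying the six vertices of the realized hexagon. For $R\Gamma(\iota^*\mathsf{M}^{\rm coh}(\mcX))$, proper base change along the Cartesian square $Y\hookrightarrow\mcX$, $\Spec k\hookrightarrow\Spec\mcO_K$ (using properness of $p\colon\mcX\to\Spec\mcO_K$) yields $\iota^*p_*\one_\mcX\simeq q_*\one_Y=\mathsf{M}^{\rm coh}(Y)$, whose cohomology after realization is $\Hm^*(Y)$. For the two $\Psi\An^*j^*$-vertices, the polystability hypothesis guarantees via \Cref{prop:semiss_is_gr} that $\mathsf{M}^{\rm coh}(\mcX_K^{\An})$ lies in $\RigDA(K)^{\rm gr}$, and the built-in compatibility $\pi\circ\widehat{R\Gamma}\simeq R\Gamma\circ\Psi$ recorded in \eqref{eq:def_Fhat} identifies these two vertices with $\widehat{\Hm}{}^*(\mcX_K^{\An})$ and its Tate twist, linked by the monodromy $N$.

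The crucial identification is that of $R\Gamma(\iota^!\mathsf{M}^{\rm coh}(\mcX))$ with $\Hm_{2d-*}(Y)(-d-1)$. I would proceed via base change for $\iota^!$, namely $\sigma^!p_*\simeq q_*\iota_0^!$ (available because $p$ is proper), thus reducing to a motivic computation of $q_*\iota_0^!\one_\mcX$ in $\DA(k)$. Since $\mcX$ is regular and $Y=V(\varpi)$ is cut out by the uniformizer, $\iota_0\colon Y\hookrightarrow\mcX$ is a regular closed immersion of codimension one with globally trivial conormal bundle, so the local form of purity gives $\iota_0^!\one_\mcX\simeq\one_Y(-1)[-2]$. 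Combining this with Poincar\'e duality for the proper, pure-dimensional polystable scheme $Y$ (extracted from its smooth-stratum filtration, which renders the dualizing complex of the expected Tate form) rewrites $q_*\iota_0^!\one_\mcX$ as $\mathsf{M}(Y)(-d-1)[-2(d+1)]$, which upon passage to cohomology delivers $\Hm_{2d-*}(Y)(-d-1)$. I expect this Poincar\'e-duality step to be the main technical obstacle, since the failure of $Y$ to be smooth prevents a direct invocation of absolute purity and requires exploiting the polystable stratification.

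With all vertices identified, the chain complexes arise by combining the three realized triangles: the localization triangle $\iota^!\to\iota^*\to\chi^{\alg}j^*$, the monodromy triangle $\chi^{\alg}j^*\to\Psi\An^*j^*\xto{N}\Psi\An^*j^*(-1)$ extracted from $\chi^{\alg}=\fib N$ via \Cref{rmk:identifadj}, and the third triangle binding the two that is forced by the hexagonal commutativity of \eqref{eq:calotte}. Splicing the resulting long exact sequences around the hexagon produces the stated chain complex, the two versions of the claim corresponding to the homological and cohomological formats (related by the dualisation between $\Hm_*$ and $\Hm^*$ built into the statement).
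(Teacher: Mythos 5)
Your overall strategy is exactly the paper's: realize the hexagon \eqref{eq:calotte}, identify the six vertices (proper base change for $\iota^*$, the compatibility $\pi\circ\widehat{R\Gamma}\simeq R\Gamma\circ\Psi$ for the two $\Psi$-vertices), and splice the resulting triangles into chain complexes. The gap is in the one step where the real content sits, namely the identification of the $\iota^!$-vertex. Your purity claim $\iota_0^!\one_\mcX\simeq\one_Y(-1)[-2]$ is false: absolute purity (item (iv) of the six-functor list) requires \emph{both} schemes to be regular, and the polystable special fiber $Y$ is singular; a Koszul-regular principal immersion of a singular divisor does not satisfy purity (already $V(xy)\subset\A^2$ fails, as one sees from the localization triangle against $\mathsf{M}^{\rm coh}(\G_m^2)$). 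The ``Poincar\'e duality for the polystable $Y$'' you then invoke is equally false: for $Y$ two copies of $\P^1$ meeting in a point ($d=1$) one has $\mathsf{M}^{\rm coh}(Y)\simeq\one\oplus\one(-1)[-2]^{\oplus 2}$ while $\mathsf{M}(Y)(-1)[-2]\simeq\one^{\oplus 2}\oplus\one(-1)[-2]$, so $\mathsf{M}^{\rm coh}(Y)\not\simeq\mathsf{M}(Y)(-d)[-2d]$. The two errors do not cancel: in this example your intermediate value $q_*\one_Y(-1)[-2]=\mathsf{M}^{\rm coh}(Y)(-1)[-2]$ already differs from the correct answer $\mathsf{M}(Y)(-2)[-4]$, so the final (correct) identification is asserted rather than derived.

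The fix — and this is where the paper's phrase ``together with absolute purity'' points — is to apply purity only to regular schemes and never to $Y$. Since $\mcX$ is regular and $p$ is proper, factoring $p$ locally through a closed immersion into a smooth $\mcO_K$-scheme and combining absolute purity for regular closed immersions of regular schemes with relative purity gives $p^!\one_{\mcO_K}\simeq\one_\mcX(d)[2d]$; absolute purity for $\sigma\colon\Spec k\hookrightarrow\Spec\mcO_K$ gives $\sigma^!\one_{\mcO_K}\simeq\one_k(-1)[-2]$. Hence $\iota_0^!\one_\mcX\simeq\iota_0^!p^!\one_{\mcO_K}(-d)[-2d]\simeq q^!\sigma^!\one_{\mcO_K}(-d)[-2d]\simeq q^!\one_k(-d-1)[-2d-2]$, and since $q$ is proper, $q_*\iota_0^!\one_\mcX\simeq q_!q^!\one_k(-d-1)[-2d-2]=\mathsf{M}(Y)(-d-1)[-2d-2]$, whose realization is the term $\Hm_{2d-*}(Y)(-d-1)$ of the statement. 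In other words, the $\iota^!$-vertex is the \emph{homological} motive of $Y$ (twisted and shifted by the dimension of the regular total space $\mcX$), and no duality statement about the singular fiber $Y$ is needed or true.
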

\begin{proof}The Corollary follows immediately by applying the realization functor to the square \eqref{eq:calotte}, together with absolute purity. 
\end{proof}
When $\widehat{R\Gamma}$ is the canonical extension of the Hodge realization $R\Gamma_{\rm Hodge}\colon \DA(\C)\to \Ind\mcD^b(\MHS^p_\Q)$ 
 of \Cref{ex:Betti} 
     we obtain exactly the sequence in \Cref{thm:CS}. 

\begin{rmk}When $\widehat{R\Gamma}$ is the Hyodo-Kato realization functor, we obtain in particular that for every $\mcX/\mcO_K$ as above a \emph{complex} of $\varphi$-modules
\[\ldots\to  H^{n}_{\rig}(Y) \xrightarrow{{\rm sp}} H^n_{HK}(\mcX_K^{\An}) \xrightarrow{N} H^n_{HK}(\mcX_K^{\An})(-1) \to \ldots  \]
that is exact when the $p$-adic weight-monodromy conjecture holds. Exactness at the spot $H^n_{HK}(\mcX_K^{\An})$ is the $p$-adic analogue of the local invariant cycle theorem. The existence of such sequence (as well as the exactness statement) was originally conjectured by Chiarellotto \cite{chiar-duke} in the semi-stable setting, and then conjecturally extended to the analogue of the Clemens-Schmid sequence by Flach and Morin \cite{flach-morin}. To our knowledge, not even the morphisms appearing in the 
$p$-adic Clemens-Schmid complex had been explicitly defined in this generality prior to \cite{bgv}.
\end{rmk}
\begin{rmk}
In more classical terms, for \emph{proper} $\mcX$, the identifications of \cite{HK94} allow to describe the map $\mathrm{sp}$ as follows. First, there is a morphism (constructed by Chiarellotto in \cite{chiar-duke})
\[
H^{n}_{\rig}(Y/K_0) \to H^n((Y,N)/(W(k),\N))\otimes K_0
\]
where $(Y,N)$ is the proper semistable log scheme over the log point $k^0$ obtained by equipping the special fiber of $\mcX$ with its canonical log structure, and $H^n((Y,N)/(W(k),\N))$ is the log crystalline cohomology group relative to the base $(W(k), L)$, with $1\mapsto 0$.
The Hyodo-Kato isomorphism \cite[Proposition 4.13]{HK94} further gives an identification
\[
H^n((Y,N)/(W(k),\N))\otimes K_0 \simeq H^n((Y,N)/(W(k)^\times)\otimes K_0
\]
where the right-hand side denotes the crystalline cohomology of $(Y,N)$ computed relative to the standard log structure on $W(k)$ given by $1\mapsto p$. This is  in turn  isomorphic to the de Rham cohomology of the (analytic) generic fiber $H^n_{\rm dR}(\mcX_K^{\An})$ by \cite[Theorem 5.1]{HK94} (in this case clearly agrees with algebraic de Rham cohomology, since $\mcX_K$ is smooth and proper over $K$). Our method completely bypasses the reference to the log crystalline cohomology groups (relative to different log bases), and constructs directly the map $\mathrm{sp}$ between $H^n_{\rm rig}(Y)$ and $H^n_{HK}(\mcX_K^{\An})$. 

In log-terms, using the comparison isomorphisms above,  we can interpret the Clemens--Schmid complex  as connecting the   rigid cohomology  and the log-crystalline cohomology (after inverting $p$) of the \emph{singular} special fiber $Y$.
\end{rmk}
%This allows to capture the cohomology of the (singular) special fiber $Y$ thanks to the following

 %\cite{Ayoub_bignami} using his original definition of the motivic nearby cycle

\bibliography{pisabib}
\bibliographystyle{siam}

 \end{document}